\let\stdthebibliography\thebibliography
\let\stdendthebibliography\endthebibliography
\tikzset{
	symbol/.style={
		draw=none,
		every to/.append style={
			edge node={node [sloped, allow upside down, auto=false]{$#1$}}}
	}
}
\newtheorem{theorem}{Theorem}[section]
\newtheorem{proposition}[theorem]{Proposition}
\newtheorem{lemma}[theorem]{Lemma}
\newtheorem{corollary}[theorem]{Corollary}
\theoremstyle{definition}
\newtheorem{definition}[theorem]{Definition}
\theoremstyle{remark}
\newtheorem{remark}[theorem]{Remark}
\numberwithin{equation}{section}
\newcommand{\agamma}{\vec{\gamma}}
\newcommand{\RR}{\mathbb{R}}
\DeclareMathOperator{\Mod}{Mod}
\newcommand{\T}{\mathcal{T}}
\newcommand{\M}{\mathcal{M}}
\DeclareMathOperator{\Stab}{Stab}
\newcommand{\cQ}{\mathcal{Q}}
\newcommand{\QoT}{\mathcal{Q}^1\mathcal{T}_g}
\newcommand{\QT}{\mathcal{Q}\mathcal{T}_g}
\newcommand{\QoM}{\mathcal{Q}^1\mathcal{M}_g}
\newcommand{\MF}{\mathcal{MF}}
\newcommand{\MRG}{\mathcal{MRG}}
\newcommand{\Fol}{\mathcal{F}}
\newcommand{\bfL}{\mathbf{L}}
\newcommand{\vg}{\vec{\gamma}}
\newcommand{\mrg}{\mathbf{x}}
\newcommand{\mrgy}{\mathbf{y}}
\renewcommand{\tt}{\mathcal{T}_g}
\newcommand{\mcg}{\mathrm{Mod}_g}
\DeclareMathOperator{\JS}{JS}
\DeclareMathOperator{\Ext}{Ext}
\DeclareMathOperator{\Teich}{Teich}
\newcommand{\AGY}{\mathrm{AGY}}
\newcommand{\MV}{\mathrm{\mathrm{MV}}}
\newcommand{\Th}{\mathrm{Thu}}
\newcommand{\Sp}{\Xi}
\newcommand{\bfm}{\mathbf{m}}
\newcommand{\bfx}{\mathbf{x}}
\newcommand{\tbfm}{\widetilde{\mathbf{m}}}
\newcommand{\hbfm}{\widehat{\mathbf{m}}}
\newcommand{\tnu}{\widetilde{\nu}}
\newcommand{\hnu}{\widehat{\nu}}
\newcommand{\tmu}{\widetilde{\mu}}
\begin{document}
\title[Critical graphs of Jenkins-Strebel differentials]{The distribution of critical graphs of Jenkins-Strebel differentials}

\author{Francisco Arana--Herrera}

\email{farana@umd.edu}

\address{Department of Mathematics, University of Maryland, 4176 Campus Drive, College Park, MD 20742, USA}
	
\author{Aaron Calderon}

\email{aaroncalderon@uchicago.edu}

\address{Department of Mathematics, University of Chicago, 5734 S. University Ave, Chicago, IL 60637, USA}

%\subjclass[2000]{Primary }
%    The 2010 edition of the Mathematics Subject Classification is
%    now available.  If you are citing a classification from the
%    new scheme, use the following input coding instead.
%\subjclass[2010]{Primary }

\date{\today}

\begin{abstract}
By work of Jenkins and Strebel, given a Riemann surface $X$ and a simple closed multi-curve $\alpha$ on it, there exists a unique quadratic differential $q$ on $X$ whose horizontal foliation is measure equivalent to $\alpha$. We study the distribution of the critical graphs of these differentials in the moduli space of metric ribbon graphs as the extremal length of the multi-curves goes to infinity, showing they equidistribute to the Kontsevich measure regardless of the initial choice of $X$.
\end{abstract}

\maketitle

\thispagestyle{empty}

\section{Introduction}
\label{sec:intro}

\subsection*{Overview.} Famous independent works of Jenkins and Strebel \cite{Jenkins,Streb1,Streb2,Streb3} show that, given a Riemann surface $X$ and a simple closed multi-curve $\alpha$ on it, there exists a unique quadratic differential $q$ on $X$ whose horizontal foliation represents each curve of $\alpha$ by a cylinder of height one. To every such quadratic differential $q$ one can associate its {\em critical graph}, a ribbon graph having the singularities of $q$ as vertices and the horizontal saddle connections of $q$ as edges. This ribbon graph inherits a metric from the singular flat metric induced by $q$ on $X$. Roughly speaking, the critical graph encodes the conformal geometry of $X$ away from $\alpha$. See Figure \ref{fig:JS}.

By work of Mirzakhani\footnote{Although not explicitly stated, this follows directly from \cite[Theorem 1.3]{Mir08b}. See
\cite[Corollary 5.13]{MGTcurrents} for another proof from the viewpoint of geodesic currents.}
\cite{Mir08b},
given a closed, connected, oriented Riemann surface $X$, the number of simple closed (multi-)curves on $X$ of extremal length $\leq L^2$ is asymptotic to a polynomial of degree $6g-6$.
Each one of these curves gives rise to a critical graph as described above.
In this paper we show these critical graphs, appropriately rescaled, equidistribute to the Kontsevich measure on the moduli space of metric ribbon graphs. In particular, the limiting distribution is independent of the initial choice of $X$.

\begin{figure}[hb]
    \centering
    \includegraphics[scale=.5]{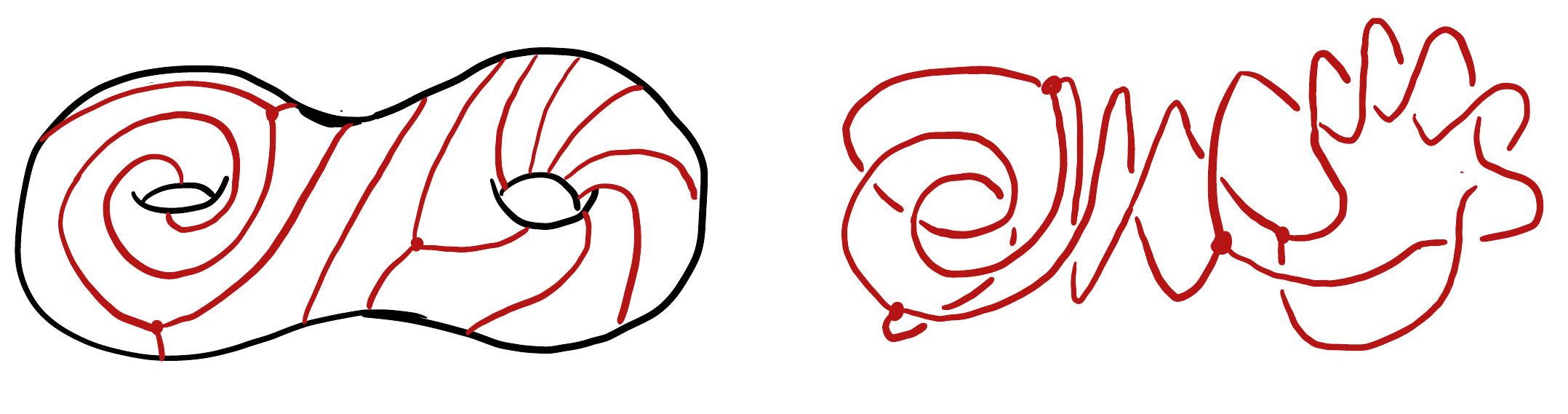}
    \caption{A Jenkins-Strebel differential and its critical graph.}
    \label{fig:JS}
\end{figure}

The question at hand is heavily motivated by analogous results in the setting of hyperbolic surfaces. In \cite{spine}, the authors showed that, given a closed, connected, oriented hyperbolic surface $X$, the metric ribbon graph spines of complementary subsurfaces to simple closed multi-geodesics also equidistribute to the Kontsevich measure on the corresponding moduli space as the lengths of the geodesics goes to infinity.
This result in turn follows a line of investigation that can be traced back to several other authors \cite{Mir16, Liu19,Ara20a,ES20}.
Analogous questions have also been answered in the setting of homogeneous dynamics
\cite{AES16a,AES16b,ERW19}.

Paralleling our approach in \cite{spine}, we
%The main difficulty of the problem at hand comes from the fact that on moduli spaces of metric ribbon graphs there is no access to an ergodic flow that can be used to study the equidistribution of points via standard methods. Our approach is based on a series of intricate reductions that 
reduce the problem to an equidistribution question concerning the dynamics of the Teichmüller horocycle flow on moduli spaces of quadratic differentials.
These reductions combine Margulis's well-known averaging and unfolding techniques \cite{Mar04} with several recent developments in Teichmüller theory. Of particular importance are
Delaunay triangulations of quadratic differentials following Masur and Smillie \cite{MS91}, the AGY-metric on the moduli space of quadratic differentials developed by Avila, Gouzel, and Yoccoz \cite{AGY06,AG13}, and the study of the projection of the Masur-Veech measure to the moduli space of Riemann surfaces carried out by Athreya, Bufetov, Eskin, and Mirzakhani \cite{ABEM12}.

The main result of this paper also provides a new procedure for sampling random metric ribbon graphs. In particular, the geometry of any single conformal surface reflects the geometry of random metric ribbon graphs.

\subsection*{Main result.} To streamline our exposition, for the moment we only state our main result in the (representative) case of non-separating simple closed curves.
The statement for the general case appears as Theorem \ref{theo:main_2A} below.

For the rest of this discussion fix an integer $g \geq 2$ and denote by $S_g$ a closed, connected, oriented surface of genus $g$. Let $\mathrm{Mod}_g$ be the mapping class group of $S_g$, $\mathcal{T}_g$ be the Teichmüller space of marked conformal structures on $S_g$, and $\mathcal{M}_g$ be the moduli space of conformal structures on $S_g$. Free homotopy classes of unoriented simple closed curves on $S_g$ will be refered to as simple closed curves. Given a simple closed curve $\alpha$ on $S_g$ and a marked conformal structure $X \in \mathcal{T}_g$, denote by $\mathrm{Ext}_X(\alpha) > 0$ the extremal length of $\alpha$ with respect to $X$; see \S\ref{sec:prelim} below for a discussion of the different definitions of extremal length.

Let $\gamma$ be a non-separating simple closed curve on $S_g$ and $X \in \mathcal{T}_g$ be a marked conformal structure on $S_g$. For every $L > 0$ consider the counting function
\[
s(X,\gamma,L) := \# \{\alpha \in \mathrm{Mod}_g \cdot \gamma \ | \ \mathrm{Ext}_X(\alpha) \leq L^2 \}.
\]
This function does not depend on the marking of $X \in \mathcal{T}_g$ but only on its underlying conformal strucure $X \in \mathcal{M}_g$. Indeed, it is equal to the number of non-separating simple closed curves on $X$ of extremal length $\leq L^2$.
By Mirzakhani's seminal work \cite{Mir08b},
this counting function is asymptotically polynomial (see \eqref{eq:mirA} below).
%there exist constants $c(\gamma) > 0$, $\Lambda(X) > 0$, and $b_g > 0$ such that
%\begin{equation*}
%\lim_{L \to \infty} \frac{s(X,\gamma,L)}{L^{6g-6}} = \frac{c(\gamma) \cdot \Lambda(X)}{b_g}.
%\end{equation*}

Given $X \in \mathcal{T}_g$ and a non-separating simple closed curve $\alpha$ on $S_g$,
denote by $\JS(X, \alpha)$ the unique Jenkins--Strebel differential on $X$ whose horizontal measured foliation is equivalent to $\alpha$ (where each component is given weight 1); in particular, the horizontal foliation of $\JS(X, \alpha)$ consists of a single cylinder of height 1.
Consider the {\em critical graph} of this differential, that is, the union of all of the singular horizontal trajectories of $\JS(X, \alpha)$ equipped with the restriction of the underlying singular flat metric and the ribbon structure coming from its embedding in $S_g$.
Each of its boundaries has length $\Ext_X(\alpha)$, so to understand the distribution of these graphs we need to rescale them (see \S \ref{sec:prelim}).

Denote by $\mathcal{MRG}_{g-1,2}(1,1)$ the moduli space of metric ribbon graphs of genus $g-1$ with two boundary components, each of length $1$. For $X$ and $\alpha$ as above let 
\[\Sp^1(X, \alpha) \in \mathcal{MRG}_{g-1,2}(1,1)\]
denote the critical graph of $\JS(X, \alpha)$ rescaled by $1/\Ext_X(\alpha)$.
We remark that $\Sp^1(X, \alpha)$ is {\em not} the same as the critical graph of the unit-area Jenkins--Strebel differential whose horizontal foliation is projectively equivalent to $\alpha$; this is because critical graphs scale linearly while extremal length scales quadratically.

On $\mathcal{MRG}_{g-1,2}(1,1)$ consider the counting measure
\[
\eta_{X,\gamma}^L := \sum_{\alpha \in \mathrm{Mod}_g \cdot \gamma} \mathbbm{1}_{[0,L^2]}(\mathrm{Ext}_X(\alpha)) \cdot \delta_{\Sp^1(X, \alpha)}.
\]
Just like the counting function $s(X,\gamma,L)$, this measure does not depend on the marking of $X \in \mathcal{T}_g$ but only on the underlying conformal structure. Denote by $\eta_\mathrm{Kon}$ the measure induced by the Kontsevich symplectic form on $\mathcal{MRG}_{g-1,2}(1,1)$ and by $c_g > 0$ its total mass (see \S \ref{sec:critical_equi} below or \cite[\S2]{spine}).

The following theorem, which shows that the rescaled critical graphs of Jenkins--Strebel differentials of  non-separating simple closed curves equidistribute over the moduli space $\mathcal{MRG}_{g-1,2}(1,1)$, is an instance of the main result of this paper. For the general version see Theorem \ref{theo:main_2A}, as well as Theorem \ref{theo:main_3_new} for an even stronger version concerning simultaneous equidistribution.

\begin{theorem}
\label{theo:main_intro}
Let $\gamma$ be a non-separating simple closed curve and $X \in \mathcal{M}_g$. Then
\[
\lim_{L \to \infty} \frac{\eta_{X,\gamma}^L}{s(X,\gamma,L)} = \frac{\eta_{\mathrm{Kon}}}{c_g}
\]
with respect to the weak-$\star$ topology for measures on $\mathcal{MRG}_{g-1,2}(1,1)$,
\end{theorem}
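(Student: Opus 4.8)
The plan is to follow the Margulis-style averaging and unfolding scheme that the authors announced in the introduction, and which they have already carried out for hyperbolic spines in \cite{spine}. The basic strategy is to reinterpret the counting measure $\eta_{X,\gamma}^L$ as an integral over a single $\mathrm{Mod}_g$-orbit in a space of measured foliations, unfold this to an integral over a horoball in the moduli space of quadratic differentials, and then invoke equidistribution of large horospheres under the Teichm\"uller geodesic (or horocycle) flow together with the explicit form of the Masur--Veech measure near its boundary. Throughout, one should work with the natural map that sends a Jenkins--Strebel differential to its critical graph; the key geometric input is that this map is (up to the rescaling by $1/\Ext_X(\alpha)$) exactly the map ``forget the cylinder'' on the stratum of quadratic differentials with a single horizontal cylinder of height $1$, whose complement is a metric ribbon graph in $\mathcal{MRG}_{g-1,2}(1,1)$.

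More precisely, here is the order in which I would carry out the steps. \emph{Step 1 (Reduction to a measure-equidistribution statement).} Fix a compactly supported continuous test function $f$ on $\mathcal{MRG}_{g-1,2}(1,1)$ and express $\int f \, d\eta_{X,\gamma}^L$ as a sum over $\alpha \in \mathrm{Mod}_g \cdot \gamma$ with $\Ext_X(\alpha) \le L^2$ of $f(\Sp^1(X,\alpha))$; the goal is to show this is asymptotic to $\frac{1}{c_g}\left(\int f \, d\eta_{\mathrm{Kon}}\right) s(X,\gamma,L)$. \emph{Step 2 (Unfolding).} Identify $\mathrm{Mod}_g \cdot \gamma$ with $\mathrm{Mod}_g / \Stab(\gamma)$, pass to a cover of $\mathcal{M}_g$, and rewrite the weighted count as the integral of a $\Stab(\gamma)$-invariant function over Teichm\"uller space; concretely the condition $\Ext_X(\alpha) \le L^2$ carves out, in the space of quadratic differentials over $X$ with horizontal foliation a single cylinder, a region whose volume grows like $L^{6g-6}$, exactly matching Mirzakhani's asymptotic \eqref{eq:mirA}. \emph{Step 3 (Horocyclic equidistribution).} Recognize the rescaling $q \mapsto q / \Ext_X(\alpha)$, combined with varying over the $\mathrm{Mod}_g$-orbit, as sampling longer and longer pieces of a horosphere (the set of quadratic differentials over $X$) inside $\QoM_g$ near the principal boundary stratum; push forward the Masur--Veech measure and apply the known equidistribution of expanding horospheres (via the AGY metric and the $\mathrm{SL}_2(\RR)$-action, following \cite{AGY06,AG13} and the boundary analysis of \cite{ABEM12}). \emph{Step 4 (Identifying the limit).} Compute that, under the ``forget the cylinder'' map, the conditional of the Masur--Veech measure on one-cylinder differentials with unit height pushes forward to the Kontsevich measure $\eta_{\mathrm{Kon}}$ on $\mathcal{MRG}_{g-1,2}(1,1)$; this is essentially the content of the Kontsevich/Norbury volume computations and should already be packaged in \cite{spine}. \emph{Step 5 (Uniformity and normalization).} Divide by $s(X,\gamma,L)$, whose asymptotic is the $f \equiv 1$ case of the same computation, so the $X$-dependent constants cancel and the limit is $\eta_{\mathrm{Kon}}/c_g$; finally promote weak-$\star$ convergence of the normalized measures from convergence of integrals of compactly supported test functions.

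Several technical points require care. One must control the contribution of degenerate critical graphs --- those with very short edges, which sit near the boundary of $\mathcal{MRG}_{g-1,2}(1,1)$ --- to know that no mass escapes; since $\eta_{\mathrm{Kon}}$ is a finite measure and the test functions are compactly supported this is a non-escape-of-mass argument, but it needs the volume estimates to be genuinely uniform in the complexity of the ribbon graph. One also needs that extremal length, as a function on the relevant slice of quadratic differentials, is comparable to the flat-length parameters controlling the horospherical coordinates, so that the region $\{\Ext_X(\alpha) \le L^2\}$ is well-approximated by a metric ball in the AGY metric; this comparison (between extremal length and the modulus of the JS cylinder) is classical but must be made quantitative. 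In my estimation the \textbf{main obstacle} is Step 3: making the horocyclic equidistribution argument work uniformly as one approaches the principal boundary stratum, i.e.\ simultaneously handling the thin part of the quadratic differential moduli space where the JS cylinder degenerates and the thin part of $\mathcal{MRG}$ where the critical graph degenerates. This is exactly the kind of difficulty that \cite{ABEM12} was designed to overcome on the Teichm\"uller side and that \cite{spine} overcame on the hyperbolic side, so the expectation is that the same toolkit --- Delaunay triangulations following \cite{MS91} to get honest coordinates, plus the AGY metric to get uniform mixing rates --- will suffice, but transporting it to the Jenkins--Strebel setting (where the relevant differentials live on a codimension-$0$ but very special locus, namely one-cylinder differentials of fixed height) is where the real work lies.
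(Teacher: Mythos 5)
Your plan follows the same averaging-and-unfolding outline the paper uses (and announces explicitly as a parallel to \cite{spine}), and you have correctly identified the main ingredients: reduction to a weighted count, unfolding over $\T_g/\Stab_0(\gamma)$, equidistribution of expanding horoballs in the quadratic differential moduli space, and identification of the limiting conditional measure with the Kontsevich measure via the cut-and-glue (``forget the cylinder'') fibration. That said, two points in your Step 3 deserve correction, because they would send you down the wrong road in the details.

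First, the horospheres you describe --- ``the set of quadratic differentials over $X$'' --- are the wrong objects. What the argument unfolds to are horoball measures supported on the $\Mod_g$-orbit of the unstable leaf $\Fol^u_{[\gamma]} = \{q : [\Im(q)] = [\gamma]\}$, obtained by restricting the leafwise Thurston measure to $\{\,q : i(\Im(q),\Re(q)) \le L\,\}$ and then pushing to $\QoM$ (equivalently, restricting the Masur--Veech measure $\bfm$ on $\T_g$ to $\{\,Y : \sqrt{\Ext}_Y(\gamma)\le L\,\}$ and pushing to $\M_g$). The fibers $S(X) = Q^1(X)$ over a fixed base point are the objects in the \cite{ABEM12} expanding-ball theorem, which the paper explicitly contrasts with its own setup in the introduction; while both ultimately rest on similar Teichm\"uller dynamics, conflating the two objects would cause real trouble when you try to carry out Step 2 and relate the unfolded integral to the equidistributed measures.

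Second, your diagnosis of the main obstacle is misplaced. The equidistribution of the unstable-leaf horoballs needed in Step 3 is handled essentially in one line in the paper by citing Forni's horosphere-equidistribution theorem \cite{push} (itself built on \cite{EM,EMM}); no extra uniformity near a ``boundary stratum'' is needed there. The genuinely delicate part is the \emph{averaging} step that you must perform \emph{before} unfolding: to replace the pointwise count at $X$ by an integral of counts over a small ball $\{Y : d_{\Teich}(X,Y)<\delta\}$, one needs that $Y \mapsto \Sp^1(Y,\vec\alpha)$ varies in a way that is uniform over $\alpha$. This fails for $\alpha$ whose Jenkins--Strebel differential on some nearby $Y$ lies in or near a non-principal stratum, because then small deformations of $Y$ can change the \emph{combinatorial type} of the critical graph. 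So the real work is (i) showing that such $\alpha$ are asymptotically rare in the $\Mod_g$-orbit (the key estimate in \S\ref{sec:msl}, via Delaunay triangulations and dual train tracks), and (ii) establishing, via the AGY metric, a uniform Lipschitz-type control on the edge lengths of $\Sp^1(Y,\vec\alpha)$ for the remaining $\alpha$ (\S\ref{sec:weightsvaryA}). You cite the right tools --- Delaunay triangulations \cite{MS91} and the AGY metric \cite{AGY06,AG13} --- but attribute them to the equidistribution step rather than to this variation estimate. Once Steps 2--4 are set up with the correct unstable-leaf horoballs and the averaging step is in place, the rest of your plan (Steps 1, 4, 5, including the Kontsevich/Norbury volume identification and the $f\equiv 1$ normalization to recover Mirzakhani's asymptotic) is in line with what the paper does.
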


\subsection*{Main ideas of proof.}
Our proof of Theorem \ref{theo:main_intro} follows the same outline as the analogous result in the hyperbolic setting \cite[Theorem 1.1]{spine}.
Namely, we reduce the desired equidistribution result on $\MRG_{g-1,2}(1,1)$ to a curve counting result, which we then translate back into an equidistribution result over $\M_g$. 

First, consider the following reformulation.
Let $f \colon \mathcal{MRG}_{g-1,2}(1,1) \to \mathbb{R}_{\geq 0}$ be a continuous, compactly supported function.
Then, for any non-separating simple closed curve $\gamma$, any $X \in \mathcal{T}_g$, and any $L > 0$, define the $f$-weighted counting 
\begin{align*}
c(X,\gamma,f,L) &:= 
\sum_{\alpha \in \mathrm{Mod}_g \cdot \gamma} \mathbbm{1}_{[0,L^2]}(\Ext_X(\alpha)) \cdot f(\Sp^1(X, \alpha))\\
&\phantom{:}=
\int_{\mathcal{MRG}_{g-1,2}(1,1)} f(\bfx) \thinspace d\eta_{X,\gamma}^L(\bfx).
\end{align*}
%Again, this counting function is independent of the marking.

Theorem \ref{theo:main_intro} is then equivalent to the following counting result.

\begin{theorem}
\label{theo:red_introA}
Let $\gamma$ be a non-separating simple closed curve on $S_g$, $X \in \mathcal{M}_g$ be a conformal structure on $S_g$, and $f \colon \mathcal{MRG}_{g-1,2}(1,1) \to \mathbb{R}_{\geq 0}$ be a continuous, compactly supported function. Then,
\[
\lim_{L \to \infty} \frac{c(X,\gamma,f,L)}{s(X,\gamma,L)} = \frac{1}{c_g} \int_{\mathcal{MRG}_{g-1,2}(1,1)} f(\bfx) \thinspace d\eta_\mathrm{Kon}(\bfx).
\]
\end{theorem}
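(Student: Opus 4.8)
The plan is to follow the Margulis-style averaging and unfolding argument developed in \cite{spine}, adapted to the setting of Jenkins--Strebel differentials. The overarching strategy is to interpret the $f$-weighted count $c(X,\gamma,f,L)$ as an integral over a slice of the moduli space of quadratic differentials, push this forward via the Teichm\"uller horocycle flow, and then appeal to known equidistribution results to extract the limit. We work on the space $\QoT$ (and its quotient $\QoM$) of unit-area quadratic differentials, whose horocycle flow orbits correspond geometrically to varying the imaginary part of the period coordinates while fixing the real part; the key point is that a Jenkins--Strebel differential with a single cylinder of height $1$ is precisely a fixed point (up to the cylinder twist) of a natural horocycle-type flow, and as the height varies, the conformal structure $X$ sweeps out a horocycle-like orbit in $\T_g$.

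First I would set up the unfolding: writing $\Mod_g \cdot \gamma$ as $\Mod_g / \Stab(\gamma)$ and noting that $\Stab(\gamma)$ acts on the Jenkins--Strebel construction, I would express $c(X,\gamma,f,L)$ as an average of the test function $f \circ \Sp^1$ against the orbit of a point (or rather, a one-parameter family indexed by the cylinder height / extremal length) in $\QoM$. Concretely, the Delaunay/Masur--Smillie picture lets us decompose any quadratic differential near the relevant stratum as a cylinder of some height glued to a critical graph carrying the rest of the conformal data; the critical graph of $\JS(X,\alpha)$, rescaled, is exactly the metric ribbon graph coordinate in this decomposition. The extremal length constraint $\Ext_X(\alpha) \le L^2$ becomes a constraint on the circumference (equivalently the modulus) of the cylinder, which translates into truncating the horocycle orbit at time $\sim L^2$. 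Margulis's averaging method then says: integrate $f$ over the truncated orbit and divide by the orbit length; as $L \to \infty$ this converges to the integral of $f$ against the Masur--Veech measure on $\QoM$ conditioned on the appropriate stratum, by ergodicity/mixing of the Teichm\"uller geodesic and horocycle flows (here one uses the work of Athreya--Bufetov--Eskin--Mirzakhani \cite{ABEM12} on the projection of the Masur--Veech measure, together with the non-divergence estimates afforded by the AGY-metric of \cite{AGY06,AG13} to rule out escape of mass).

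The crucial identification, and the one I would spend the most care on, is that the pushforward of the Masur--Veech measure under the map $q \mapsto \Sp^1(q)$ (critical graph, rescaled) is exactly the Kontsevich measure $\eta_\mathrm{Kon}$, up to the constant $c_g$. This is a ``Kontsevich's formula'' type statement: the Masur--Veech volume of a stratum of quadratic differentials with a single horizontal cylinder decomposes as an integral over the moduli space of metric ribbon graphs (the combinatorial geometry of the critical graph) times an integral over the cylinder parameters (height and twist), and the ribbon-graph factor is precisely the Kontsevich volume form. In \cite{spine} the analogous fact was established for hyperbolic spines; here the computation is arguably cleaner because the singular flat metric makes the ribbon-graph lengths literal period coordinates. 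I would either cite this directly from the structure theory of Masur--Veech volumes for once-cylindered strata or reprove it via an explicit change of variables in period coordinates.

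The main obstacle I anticipate is \emph{uniformity in $X$ and control of the boundary/truncation error}. The averaging method gives the limit cleanly for the full moduli space measure, but to divide by $s(X,\gamma,L)$ (rather than by the raw orbit length) and get a clean ratio, one needs Mirzakhani's asymptotic $s(X,\gamma,L) \sim B(X) \cdot L^{6g-6}$ with a matching leading constant, and one needs the error in the equidistribution of the truncated horocycle to be $o(L^{6g-6})$ uniformly. The delicate points are: (i) near the boundary of the stratum, i.e.\ when the cylinder is very long and thin or when some ribbon-graph edge degenerates, the critical graph leaves every compact set, so one must show this contributes negligibly --- this is where compact support of $f$ helps, but one still needs quantitative non-divergence (Eskin--Masur / Athreya-type recurrence estimates in the AGY-metric) to bound the total (unweighted) count in these regions; and (ii) matching the two a priori different normalizations, $c_g$ from the Kontsevich measure and the Mirzakhani constant, which should fall out of the same Masur--Veech volume computation used above. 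Once these are in place, dividing the two asymptotics and cancelling the common leading-order factor yields the stated limit.
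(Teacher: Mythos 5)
Your high-level roadmap matches the paper's: Margulis averaging and unfolding, reduction to equidistribution of expanding horoballs, and computation of horoball masses via the Kontsevich measure. But two of your steps misidentify the paper's key ingredients. Your ``crucial identification'' asserts that the Masur--Veech measure pushes forward under $\Sp^1$ to the Kontsevich measure; what the paper actually shows (Proposition \ref{prop:pushTh_fibKon}) is that the \emph{Thurston} conditional measure $\nu_{u,\vg}$ on the unstable leaf $\Fol^u_{[\gamma]}$ pushes forward under $\Xi$ to the fibered Kontsevich measure --- a lattice-counting statement comparing square-tiled surfaces to integral ribbon graphs. The Masur--Veech measure relates to $\nu_{u,\vg}$ only through the Hubbard--Masur function $\lambda^+$, and tracking this factor is essential: the paper invokes Mirzakhani's theorem (Theorem \ref{thm:HMconst}) that $\Lambda(X) = \int_{S(X)} \lambda^+\, ds_X$ is a constant $\Lambda_g$ to make the final limit independent of $X$. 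Without routing the argument through $\nu_{u,\vg}$ and the constant $\Lambda_g$, the normalizations will not close.

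Your boundary-control mechanism is also not the paper's, and as described it is the wrong kind of estimate. The AGY-metric is indeed used (Proposition \ref{prop:varA}) to prove uniform continuity of the critical-graph map, but only \emph{away} from the multiple-zero locus; the separate and logically prior task is to bound the number of curves $\alpha$ for which the normalized $\JS(X,\alpha)$ is $\sigma$-close to that locus, since for those $\alpha$ the critical graph's combinatorial type can jump under small deformations of $X$ and uniform continuity fails. The paper establishes a direct \emph{counting} bound $O(L^{6g-7} + \sigma L^{6g-6})$ on such bad curves (Proposition \ref{prop:key_estimate}) via a train-track lattice argument: every bad curve has a degenerate or $\sigma$-small coordinate in one of finitely many train-track charts attached to Delaunay triangulations. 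This bound feeds into the error term of Proposition \ref{prop:compA} and must be in place \emph{before} averaging, hence cannot be derived from the equidistribution one is trying to prove. Eskin--Masur/Athreya-type recurrence gives measure bounds on orbit segments and would not directly yield the required lattice count with the $L^{6g-7}$ power savings.
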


Once in this setting, we apply Margulis's ``averaging and unfolding'' techniques \cite{Mar04} to reduce the counting problem at hand to an equidistribution question over $\M_g$.
%The whole point of this reduction is to reduce the original equidistribution question over moduli spaces of ribbon graphs to an equidistribution question over the dynamically richer moduli spaces of conformal surfaces.
Just as in \cite{spine}, during the averaging step one needs  uniform control over how the metric ribbon graph $\Sp^1(X, \alpha)$ varies as $X \in \mathcal{T}_g$ does.
The basic issue is that if $\JS(X, \alpha)$ lies in (or near) a non-principal stratum then small deformations of $X$ can change the topology of the critical graph.
To remedy this, we prove in Proposition \ref{prop:key_estimate} that for most $\alpha$, the differential $\JS(X, \alpha)$ lies far away from non-principal strata.
Proposition \ref{prop:varA} then shows that for such $\alpha$ we can achieve the desired uniform control on the geometry of the critical graph.

This control allows us to perform the averaging and unfolding step of our argument, after which our original problem reduces to a question regarding the equidistribution of certain subsets, which we call ``critical-JS-horoballs,'' in the moduli space $\mathcal{M}_g$.
To tackle this question we use the rich dynamics of the Teichmüller geodesic and horocycle flows. More concretely, we use work of Forni \cite{push} which in turn relies crucially on work of Eskin, Mirzakhani, and Mohammadi \cite{EM,EMM}.

\subsection*{Related results}
As mentioned above, the main result of this paper directly parallels the main theorem of \cite{spine}. Indeed, as a consequence of our results, we find that the limiting distribution of complementary subsurfaces to hyperbolic geodesics and critical graphs of Jenkins--Strebel differentials are exactly the same.
This is a reflection of the phenomenon that as the boundary lengths of hyperbolic surfaces go to infinity, they look more and more like ribbon graphs (compare to \cite{Do10} and \cite{Mo09}).

There is a rich history of using both the conformal and hyperbolic incarnations of moduli space to probe its structure, often resulting in analogous theorems.
Of particular relevance to this paper are two different identifications of the moduli spaces of punctured/bordered Riemann surfaces with the space of metric ribbon graphs.
The first, due to Harer, Mumford, Penner, and Thurston, uses Jenkins--Strebel differentials with specified poles and residues; this identification was used in the computation of the orbifold Euler characteristic of $\M_{g,n}$ \cite{HZeuler} as well as Kontsevich's proof of Witten's conjecture \cite{Kon92}. 
The second, due to Do and Luo, uses the spine of a hyperbolic surface with boundary (see \cite{Luo}, \cite{Do10}, as well as \cite{Mo09}). This can be used to unite Mirzkahani's proof of Witten's conjecture \cite{Mir07c} with Kontsevich's (see \cite{Do10}). 
Our previous work \cite{spine} dealt with this second identification, while the paper at hand corresponds to the first paradigm.

Our theorem also echoes other equidistribution results on moduli space. 
The Teichm{\"u}ller geodesic flow, represented by the action of the diagonal matrix
\[
g_t := \left(\begin{array}{c c}e^t & 0 \\ 0 & e^{-t}\end{array} \right),
\]
expands the length of horizontal separatrices by a factor of $e^t$, so for every simple closed curve $\alpha$ the rescaled critical graph $\Xi^1(X, \alpha)$ is the same as the critical graph of $\smash{g_{-\log \Ext_X(\alpha)} \JS(X, \alpha)}$.
Denote by $\|q\|$ the area of a quadratic differential $q$. With this identification, Theorem \ref{theo:main_intro} states that point masses on the critical graphs of
\begin{equation}\label{eqn:JSflowback}
\{
g_{-\log \|q\|} q
\, | \,
q = \JS(X, \alpha) \text{ for } \alpha \in \Mod_g \cdot \gamma 
\text{ and }
\|q\| \le L
\}
\end{equation} 
equidistribute in $\MRG_{g-1, 2}(1,1)$ as $L$ tends to infinity.

This should be compared with work of \cite{ABEM12} and the first author \cite{ABEMeff}, which give mean equidistribution theorems for expanding Teichm{\"u}ller balls in moduli space.
More precisely, for every $X \in \T_g$, one can consider the underlying Riemann surfaces of the expanding ball
\begin{equation}\label{eqn:Teichball}
\{g_{\log \|q\|} q
\, | \,
q \in Q(X)
\text{ and }
\|q\| \le L\},
\end{equation}
where $Q(X)$ is the vector space of holomorphic quadratic differentials on $X$;
this is the same as the set of points with Teichm{\"u}ller distance at most $\log L$ from $X$.
Theorem 1.2 of \cite{ABEM12} then states that on average, the image of these balls in $\M_g$ equidistribute as $L$ tends to infinity.
The link between these equidistribution results for \eqref{eqn:JSflowback} and \eqref{eqn:Teichball} is the uniform distribution of simple closed curves on the space $\mathcal{MF}_g$ of singular measured foliations on $S_g$ \cite[Theorem 1.3]{Mir08b}.

Lastly, it is interesting to contrast our theorem with the main result of \cite{DSdense}. There, Dozier and Sapir show that the projections of some strata of $\QoM$ to $\M_g$ are not coarsely dense; in particular, the geometry of the critical graphs of horizontally periodic unit area differentials may be strongly constrained.
For example, if $X \in \M_g$ is not near the projection of the minimal stratum then it supports no horizontally periodic unit area differential whose critical graph has a vertex of valence $4g-2$, and supports no differential whose critical graph is close to such.

On the other hand, our main theorem implies that the set of critical graphs for a fixed $X$, rescaled to have unit length, is dense in the space of metric ribbon graphs. 
Since critical graphs are scaled under the Teichm{\"u}ller geodesic flow, our result can also be interpreted as saying that any $X$ supports a Jenkins--Strebel differential $q=\JS(X, \alpha)$ so that $g_{-\log \|q\|} q$ is arbitrarily close to the minimal stratum.

\subsection*{Organization of paper.}
In \S\ref{sec:prelim} we collect preliminary material needed to understand the statements and proofs of the main results of the paper. In addition to standard background material, we focus on how taking the critical graph of a Jenkins--Strebel differential gives a map to the moduli space of metric ribbon graphs. 
In \S\ref{sec:msl} we show that the Jenkins--Strebel differentials of most simple closed multi-curves stay deep within the principal stratum; this allows us in the subsequent \S\ref{sec:weightsvaryA} to invoke results about the AGY metric to show that for most curves, the critical graphs of their Jenkins--Strebel differentials vary uniformly as the base surface varies.
In \S\ref{sec:critical_equi} we define ``critical-JS-horoballs'', compute their total mass, and show that they equidistribute over moduli space. The results in this section rely on transporting measures between $\mathcal{T}_g$ and spaces of quadratic differentials and leverage the ergodic theory of the $\mathrm{SL}(2,\mathbb{R})$ action on the latter spaces.
Finally, in \S\ref{sec:avgunfold} we apply Margulis's averaging and unfolding strategy to prove Theorem \ref{theo:main_2A}, a generalization of Theorem \ref{theo:main_intro} to arbitrary multi-curves.

\subsection*{Acknowledgements.} The authors would like to thank Vincent Delecroix for suggesting the question answered in this paper and Curt McMullen for helpful comments. The authors would also like to thank Giovanni Forni for very enlightening conversations.
AC acknowledges support from NSF grant DMS-2202703.

\vfill
\pagebreak

\tableofcontents

\section{Preliminaries}\label{sec:prelim}

\subsection*{Outline of this section.} In this section we give of a brief overview of the basic objects and theorems that will be used throughout the rest of this paper. We begin with a discussion on the theory of Jenkins-Strebel differentials with a special emphasis on the work of Hubbard and Masur \cite{HM79}. We then briefly recall the moduli spaces of metric ribbon graphs; for a more detailed discussion of these spaces see \cite{spine}.
Lastly, we discuss in detail how the construction of critical graphs of Jenkins-Stebel differentials defines a map from (quotients of) Teichmüller space to  appropriate moduli spaces of metric ribbon graphs.

\subsection*{Extremal length.} Given a Riemann surface $X$ and a simple closed curve $\gamma$ on it, the extremal length of $\gamma$ with respect to $X$ admits two equivalent definitions. First, it can be defined analytically as
\begin{equation}
\label{eq:ext1}
\mathrm{Ext}_X(\gamma) := \sup_\rho \frac{\ell_\rho(\gamma)^2}{\mathrm{Area}(\rho)},
\end{equation}
where the supremum ranges over all conformal metrics $\rho$ on $X$ of non-zero, finite area and $\ell_\rho(\gamma)$ denotes the infimum of the $\rho$-lengths of simple closed curves isotopic to $\gamma$. Equivalently, it can be defined geometrically as
\begin{equation}
\label{eq:ext2}
\mathrm{Ext}_X(\gamma) := \inf_{C} \frac{1}{\mathrm{mod}(C)},
\end{equation}
where the infimum ranges over all embedded cylinders $C$ on $X$ with core curve isotopic to $\gamma$ and  $\mathrm{mod}(C)$ denotes the modulus of the cylinder $C$.

In independent work, Jenkins and Strebel \cite{Jenkins,Streb1,Streb2,Streb3} showed these two {\em a priori} different notions of extremal length are equivalent through the construction of so-called Jenkins-Strebel differentials; see Theorem \ref{theo:js} below.

%Two equivalent definitions of extremal length of a curve. Jenkins and Strebel recoinciled the definitions using Jenkins-Strebel differentials.

\subsection*{Quadratic differentials.} A (holomorphic) quadratic differential $q$ on a Riemann surface $X$ is a differential which in local coordinates has the form $f(z) \thinspace dz^2$ for some holomorphic function $f(z)$. Such a differential has a well defined notion of area,
\[
\|q\| := \mathrm{Area}(q) := \int_X |q|.
\]
More precisely, the differential $q$ induces a singular flat metric on $X$. If in local coordinates $z =  x + iy$ then the metric is given by $dx^2 + dy^2$; the zeroes of the differential correspond to singularities of the metric. The area of $q$ is the total area of this metric. We denote by $Q(X)$ the complex vector space of all quadratic differentials on a Riemann surface $X$, and by $S(X) \subseteq Q(X)$ the sphere of all unit area quadratic differentials on $X$. We sometimes denote quadratic differentials by $(X,q)$ to record the Riemann surface $X$ on which they are defined.

%For the rest of this section fix an integer $g \geq 2$ and denote by $S_g$ a closed, connected, oriented surface of genus $g$. Recall that $\mathcal{T}_g$ denotes the Teichmüller space of marked complex structures on $S_g$, that $\mathrm{Mod}_g$ denotes the mapping class group of $S_g$, and that $\mathcal{M}_g := \mathcal{T}_g/\mathrm{Mod}_g$ denotes the moduli space of complex structures on $S_g$.
The spaces $Q(X)$ and $S(X)$ for $X$ ranging over the Teichm{\"u}ller space $\mathcal{T}_g$ can be arranged into bundles $\mathcal{QT}_g$ and $\mathcal{Q}^1\mathcal{T}_g$ over $\mathcal{T}_g$ of marked (unit area) quadratic differentials; note that ``marking'' here refers to a marking only of the underlying surface $S_g$ and not the zeros of the quadratic differential.
These bundles support natural $\mathrm{Mod}_g$ actions given by change of markings. 
The quotient $\mathcal{Q}^1\mathcal{M}_g := \mathcal{Q}^1\mathcal{T}_g/\mathrm{Mod}_g$ is the bundle of unit area quadratic differentials over moduli space.

The bundle $\QT$ carries a natural Lebesgue-class measure called the {\em Masur--Veech measure} which is induced from an integral lattice, corresponding to differentials tiled by unit area squares. This measure induces a measure $\nu_{\mathrm{MV}}$ on the hypersurface $\QoT$ also called the Masur--Veech measure. Both versions of this measure are $\Mod_g$ invariant, and the corresponding pushforward $\widehat{\nu}_{\mathrm{MV}}$ on $\QoM$ is a finite Lebesgue-class measure. Throughout the paper we will denote its mass by $b_g$.

\subsection*{Singular measured foliations.}
Denote by $\mathcal{MF}_g$ the space of singular measured foliations on $S_g$ up to isotopy and Whitehead moves. The set of isotopy classes of weighted simple closed curves on $S_g$ embeds densely into $\mathcal{MF}_g$. Furthermore, geometric intersection number extends continuously to a pairing on $\mathcal{MF}_g$. Train track coordinates (see \S \ref{sec:msl}) induce a natural integral piecewise linear structure on $\mathcal{MF}_g$. In particular, $\mathcal{MF}_g$ carries a natural Lebesgue class measure $\mu_\mathrm{Th}$ called the Thurston measure which is invariant under the natural $\mathrm{Mod}_g$-action on $\mathcal{MF}_g$. Denote by $\mathcal{PMF}_g$ the projectivization of $\mathcal{MF}_g$ under the action that scales transverse measures and by $[\lambda] \in \mathcal{PMF}_g$ the projective class of $\lambda \in \mathcal{MF}_g$.

Every quadratic differential $q$ on a Riemann surface $X$ gives rise to a pair of singular measures foliations $\Re(q)$ and $\Im(q)$ on $X$. If in local coordinates $z =  x + iy$ the differential $q$ corresponds to $dz^2$ then $\Re(q)$ corresponds to the measured foliation induced by $|dx|$ while $\Im(q)$ corresponds to the measured foliation induced by $|dy|$; the zeroes of $q$ correspond to the singularities of the foliations. We refer to $\Re(q)$ and $\Im(q)$ as the vertical and horizontal foliations of $q$. These constructions give rise to $\mathrm{Mod}_g$-equivariant maps $\Re, \Im \colon \mathcal{QT}_g \to \mathcal{MF}_g$.

\subsection*{Jenkins-Strebel differentials.} It is natural to ask whether, given a marked Riemann surface $X \in \mathcal{T}_g$ and a simple closed curve $\gamma$ on $S_g$, it is possible to find a marked quadratic differential $q \in Q(X)$ such that $\Im(q) = \gamma$. Jenkins and Strebel \cite{Jenkins,Streb1,Streb2,Streb3} independently showed that this is always possible and, moreover, in a unique way. We refer to the corresponding quadratic differential $\mathrm{JS}(X,\gamma) \in Q(X)$ as the {\em Jenkins-Strebel differential} of $\gamma$ on $X$. Jenkins and Strebel's motivation can be further understood through the following result.

\begin{theorem}
   \label{theo:js}
   Given a marked complex structure $X \in \mathcal{T}_g$ and a simple closed curve $\gamma$ on $S_g$, the singular flat metric induced by $\mathrm{JS}(X,\gamma) \in Q(X)$ realizes the supremum in \eqref{eq:ext1}. Furthermore, the complement of the critical leaves of vertical foliation of $\mathrm{JS}(X,\gamma) \in Q(X)$ is a cylinder realizing the infimum in \eqref{eq:ext2}. In particular, the supremum and infimum in \eqref{eq:ext1} and \eqref{eq:ext2} are equal.
\end{theorem}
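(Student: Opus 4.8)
The plan is to deduce the extremal characterization, via the classical length--area method, from the existence and uniqueness of the Jenkins--Strebel differential $q := \mathrm{JS}(X,\gamma)$ granted above. Write $\ell_q(\cdot)$ for length in the singular flat metric of $q$, and let $C \subset X$ be the complement of the critical graph of $q$, i.e. the maximal horizontal cylinder. Its core curves $\gamma_0$ are isotopic to $\gamma$ and its height is $1$; set $L_0 := \ell_q(\gamma_0)$ for the common length of the closed horizontal leaves. Since the critical graph has zero area, $\|q\| = 1 \cdot L_0 = L_0$, and the flat cylinder $C$ has modulus $\mathrm{mod}(C) = 1/L_0$. The entire theorem will follow from the single identity
\[
\ell_q(\gamma) = L_0 ,
\]
i.e. that no closed curve isotopic to $\gamma$ is shorter than $L_0$ in the singular flat metric of $q$. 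Indeed, granting this, that metric contributes $\ell_q(\gamma)^2/\|q\| = L_0^2/L_0 = L_0$ to the supremum in \eqref{eq:ext1}, so that supremum is at least $L_0$; the cylinder $C$ contributes $1/\mathrm{mod}(C) = L_0$ to the infimum in \eqref{eq:ext2}, so that infimum is at most $L_0$; and the elementary length--area inequality (recorded below) forces the supremum in \eqref{eq:ext1} to be at most the infimum in \eqref{eq:ext2}. Concatenating these three bounds, both quantities equal $L_0$, the supremum is attained by the singular flat metric of $q$, and the infimum is attained by $C$; this is precisely what the theorem asserts.

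First I would prove the identity $\ell_q(\gamma) = L_0$. The inequality ``$\leq$'' is immediate from the core curves $\gamma_0$. For ``$\geq$'', let $\delta$ be any closed curve isotopic to $\gamma$ and work in a natural coordinate $z = x + iy$ of $q$ away from its zeros, in which the flat metric of $q$ is $|dz|$. Pointwise $|dz| \geq |dx|$, so $\ell_q(\delta) = \int_\delta |dz| \geq \int_\delta |dx|$, and $\int_\delta |dx|$ is exactly the total mass along $\delta$ of the transverse measure of the vertical foliation $\Re(q)$. Since the total transverse mass of a measured foliation along a closed curve dominates its geometric intersection number with that foliation --- with equality for quasi-transverse representatives --- we obtain $\int_\delta |dx| \geq i(\delta, \Re(q)) = i(\gamma, \Re(q))$. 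Finally $i(\gamma, \Re(q)) = L_0$: the core curve $\gamma_0$ is a horizontal leaf of $\Im(q)$, hence everywhere transverse to $\Re(q)$, so it already realizes the intersection number, and $i(\gamma, \Re(q)) = \int_{\gamma_0} |dx| = \ell_q(\gamma_0) = L_0$ (along $\gamma_0$ one has $dy \equiv 0$, so $|dz|$ and $|dx|$ agree there). Hence $\ell_q(\delta) \geq L_0$.

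It then remains only to record the standard length--area inequality: for every conformal metric $\rho$ on $X$ of finite nonzero area and every embedded cylinder $C'$ with core curve isotopic to $\gamma$,
\[
\frac{\ell_\rho(\gamma)^2}{\mathrm{Area}(\rho)} \leq \frac{1}{\mathrm{mod}(C')} .
\]
This follows from the classical fact that $1/\mathrm{mod}(C')$ equals the extremal length in $C'$ of the family of closed curves isotopic to the core of $C'$, together with two monotonicities: restricting $\rho$ to $C'$ does not increase its area, and a closed curve in $C'$ isotopic to the core of $C'$ is in particular a closed curve in $X$ isotopic to $\gamma$, so $\ell_{\rho|_{C'}}(\mathrm{core}(C')) \geq \ell_\rho(\gamma)$. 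Taking the supremum over $\rho$ and then the infimum over $C'$ yields the inequality between the quantities in \eqref{eq:ext1} and \eqref{eq:ext2} used above.

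With the existence of $\mathrm{JS}(X,\gamma)$ in hand, the only genuinely substantive step is the lower bound $\ell_q(\gamma) \geq L_0$; its heart is the comparison between transverse measure and geometric intersection number, together with the observation that the closed horizontal leaves of $q$ realize the latter. Everything else is bookkeeping with areas and moduli.
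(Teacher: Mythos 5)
The paper does not give a proof of this statement; it is cited as a classical result of Jenkins and Strebel. Your length--area argument is correct and is the standard classical proof, modulo the routine fact (used to evaluate $i(\gamma,\Re(q))$) that a closed nonsingular leaf of $\Im(q)$ is quasi-transverse to $\Re(q)$, which is equivalent to the identity $i(\Re(q),\Im(q))=\|q\|$.
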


\subsection*{The Hubbard-Masur theorem.} The question addressed by the works of Jenkins and Strebel can also be considered for more general singular measured foliations. Indeed, in \cite{HM79} Hubbard and Masur proved the following structural result.

\begin{theorem}
    \label{theo:HM1}
    Given  $X \in \mathcal{T}_g$ and a singular measured foliation $\lambda \in \mathcal{MF}_g$, there exists a unique quadratic differential $q = q(X,\lambda) \in Q(X)$ such that $\Im(q) = \lambda$. Furthermore, the map $q \in Q(X) \mapsto \Im(q) \in \mathcal{MF}_g$ is a homeomorphism.
\end{theorem}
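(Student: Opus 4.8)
The plan is to prove the second (stronger) assertion directly — that the map $\Phi \colon Q(X) \to \mathcal{MF}_g$, $\Phi(q) := \Im(q)$, is a homeomorphism — since existence and uniqueness of $q(X,\lambda)$ are precisely surjectivity and injectivity of $\Phi$. Both $Q(X) \cong \mathbb{C}^{3g-3}$ and $\mathcal{MF}_g$ (via Thurston's train-track charts) are manifolds of real dimension $6g-6$, so it suffices to verify that $\Phi$ is continuous, proper, and injective: invariance of domain then forces $\Phi(Q(X))$ to be open, properness forces it to be closed, connectedness of $\mathcal{MF}_g$ forces it to be everything, and a continuous proper bijection between such spaces is automatically a homeomorphism.

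\emph{Injectivity} is the one place the fixed conformal structure $X$ enters. Suppose $q_1, q_2 \in Q(X)$ satisfy $\Im(q_1) = \Im(q_2) = \lambda$. The zeros of the $q_i$ coincide with the singularities of $\lambda$, so off a common finite set both differentials are nonvanishing; near such a point write $q_i = dz_i^2$ in the natural flat coordinate $z_i$ of $q_i$. Each $z_i$ is a holomorphic coordinate for $X$, so $z_2$ is a holomorphic function of $z_1$; and since $q_1$ and $q_2$ induce the same measured foliation, the level sets of $\Im z_1$ and $\Im z_2$ coincide with matching transverse measures, which forces $\Im z_2 = \pm \Im z_1 + \mathrm{(const)}$. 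By the Cauchy--Riemann equations this means $z_2 = \pm z_1 + \mathrm{(const)}$, so $q_2 = dz_2^2 = dz_1^2 = q_1$ near the point, and therefore, by continuity, on all of $X$.

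\emph{Properness} I would deduce from two facts. First, $\Phi$ is homogeneous of degree $\tfrac12$: $\Phi(tq) = \sqrt{t}\,\Phi(q)$ for $t > 0$, since scaling $q$ by $t$ scales the singular flat metric, and hence both transverse measures, by $\sqrt{t}$. Second, $\Phi(q) \neq 0$ for $q \neq 0$: passing to the orientation double cover of the horizontal foliation and writing $q = \phi^2$ for a holomorphic $1$-form $\phi$, triviality of $|\Im(\phi)|$ as a measured foliation would force $\Im(\phi) \equiv 0$, hence $\phi \equiv 0$. Given these, if $\|q_n\| \to \infty$, write $q_n = A_n\,\widehat{q}_n$ with $\|\widehat{q}_n\| = 1$ and $A_n \to \infty$; along a subsequence $\widehat{q}_n \to \widehat{q}$ (the unit sphere in $Q(X)$ is compact), and since $\Phi(\widehat{q}) \neq 0$ there is a simple closed curve $\gamma_0$ with $i(\Phi(\widehat{q}_n), \gamma_0) \to c_0 > 0$ (using continuity), whence $i(\Phi(q_n), \gamma_0) = \sqrt{A_n}\, i(\Phi(\widehat{q}_n), \gamma_0) \to \infty$ and $\Phi(q_n)$ leaves every compact subset of $\mathcal{MF}_g$.

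The remaining input — and the step I expect to be the real obstacle — is the \emph{continuity} of $\Phi$. Since the topology of $\mathcal{MF}_g$ is generated by the intersection functionals $i(\,\cdot\,, \gamma)$, this reduces to showing $q \mapsto i(\Im(q), \gamma)$ is continuous on $Q(X)$ for each fixed simple closed curve $\gamma$. The delicate point is that the combinatorial type of the foliation $\Im(q)$ jumps as $q$ crosses between strata, so no fixed singular-leaf structure can be tracked; instead one realizes $i(\Im(q), \gamma)$ as the $\Im(q)$-height of $\gamma$ computed in a fixed triangulation of $X$ adapted to $\gamma$, expresses it through periods of $\sqrt{q}$ on a branched cover, and checks the resulting infima vary continuously with $q$. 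This is classical and due to Hubbard and Masur; I would cite \cite{HM79} for it, after which the three properties above assemble into the theorem.
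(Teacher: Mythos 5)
The paper does not prove this theorem; it is quoted as a result of Hubbard and Masur and cited directly to \cite{HM79}, so there is no internal argument to compare against. Your overall strategy --- establish that $\Phi = \Im$ is continuous, proper, and injective, then invoke invariance of domain and connectedness of $\mathcal{MF}_g$ --- is a reasonable reconstruction of a degree-theoretic route to the statement, and you are right that continuity is the genuinely hard input and that \cite{HM79} is the appropriate source for it.

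However, your injectivity argument has a real gap. You take $q_1, q_2 \in Q(X)$ with $\Im(q_1) = \Im(q_2) = \lambda$, observe that their zeros coincide, and then compare the foliations leaf-by-leaf in shared natural coordinates to conclude $z_2 = \pm z_1 + \mathrm{const}$. This would be fine if $\Im(q_1)$ and $\Im(q_2)$ were literally the same singular foliation on $X$. But equality in $\mathcal{MF}_g$ is equality up to isotopy \emph{and Whitehead moves}: the two differentials need not have the same zero structure at all (e.g.\ $q_1$ could have a single double zero where $q_2$ has two nearby simple zeros), and their horizontal foliations need not share any leaves. So the premise of your local computation --- "the level sets of $\Im z_1$ and $\Im z_2$ coincide" --- is not granted by the hypothesis. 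Upgrading Whitehead-equivalence of the two realized foliations to literal equality is precisely the delicate part of Hubbard--Masur's uniqueness proof (they argue via a minimal-norm/energy characterization of the Jenkins--Strebel representative, and Gardiner's heights theorem provides another route). As written, your injectivity step quietly assumes the conclusion of that analysis. The properness sketch is morally correct; the fact that $\Im(q) \neq 0$ in $\mathcal{MF}_g$ for $q \neq 0$ is standard, though the phrasing via the orientation double cover glosses over the case where the horizontal foliation is not co-orientable.
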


Said another way, the map associating to a simple closed curve its Jenkins--Strebel differential extends to a homeomorphism $\JS: \T_g \times \MF_g \to \QT$.

Theorem \ref{theo:HM1} suggests the following extension of the notion of extremal length to singular measured foliations: the extremal length $\mathrm{Ext}_X(\lambda)$ of $\lambda \in \mathcal{MF}_g$ with respect to $X \in \mathcal{T}_g$ is the area of $\JS(X,\lambda) \in Q(X)$. With this definition, extremal length is $2$-homogeneous with respect to the scaling action on transverse measures; this prompts us to usually work with the square root of extremal length rather than with extremal length itself. For other methods of extending extremal lengths to singular measured foliations (and more general objects) see \cite{Ker80,MGTcurrents,emcd3}.

One can also ask which pairs of singular measured foliations can arise as the vertical and horizontal foliations of quadratic differentials. It turns out that as long as the pair of foliations appropriately fills the surface, this is always possible \cite{GM91}.
A pair of singular measured foliations $(\lambda,\mu) \in \mathcal{MF}_g \times \mathcal{MF}_g$ is said to {\em bind} the surface $S_g$ if their geometric intersection number with any singular measured foliation is positive. Denote by $\Delta \subseteq \mathcal{MF}_g \times \mathcal{MF}_g$ the set of pairs of non-binding singular measured foliations.

\begin{theorem}\label{theo:GM}
    Given a pair of binding singular measured foliations $\mu, \lambda \in \mathcal{MF}_g$, there exists a unique quadratic differential $q \in \mathcal{QT}_g$ such that $\Re(q) = \mu$ and $\Im(q) = \lambda$.
    Furthermore, the map $(\Re,\Im) \colon \mathcal{QT}_g \to \mathcal{MF}_g \times \mathcal{MF}_g \setminus \Delta$ is a homeomorphism.
\end{theorem}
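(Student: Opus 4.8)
The statement has two parts: (1) existence and uniqueness of a quadratic differential realizing a given binding pair of foliations, and (2) the claim that $(\Re,\Im)$ is a homeomorphism onto $\mathcal{MF}_g \times \mathcal{MF}_g \setminus \Delta$. My plan is to deduce everything from the Hubbard--Masur theorem (Theorem \ref{theo:HM1}) and a continuity/properness argument, following Gardiner--Masur \cite{GM91}.

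\emph{Step 1: Fix the vertical foliation and vary $X$.} Given a binding pair $(\mu,\lambda)$, I would first show that there is at most one quadratic differential realizing it. Suppose $q_1, q_2 \in \mathcal{QT}_g$ both satisfy $\Re(q_i) = \mu$, $\Im(q_i) = \lambda$. They lie over (possibly different) points $X_1, X_2 \in \mathcal{T}_g$. The key input is the minimal norm property: among all quadratic differentials (on all Riemann surfaces) with a fixed horizontal foliation $\lambda$, and with the added constraint coming from $\mu$, there is a variational characterization (Gardiner's formula / the first variation of extremal length) forcing coincidence. Concretely, one uses that $\mathrm{Ext}_{X}(\lambda)$ is minimized, subject to $i(\cdot,\mu)$ being controlled, precisely at the differential with $\Re = \mu$; the heat-flow or length-area comparison then shows $q_1 = q_2$ and hence $X_1 = X_2$. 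I expect uniqueness to follow cleanly from strict convexity of the relevant functional.

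\emph{Step 2: Existence via a degree/continuity argument.} For existence, consider the map $\Phi \colon \mathcal{T}_g \to \mathcal{MF}_g$ sending $X \mapsto \Re(q(X,\lambda))$, where $q(X,\lambda)$ is the Hubbard--Masur differential from Theorem \ref{theo:HM1}. By that theorem $\Phi$ is continuous. I would show $\Phi$ is a homeomorphism onto its image, which is exactly $\{\mu : (\mu,\lambda) \text{ binds}\}$ — i.e.\ the complement of the $\lambda$-slice of $\Delta$. Surjectivity onto this set is the heart of the matter: one shows $\Phi$ is proper (a sequence $X_n$ leaving every compact set in $\mathcal{T}_g$ forces the intersection number $i(\Re(q(X_n,\lambda)),\lambda) \to 0$ or the vertical foliation to degenerate, so $\Re(q(X_n,\lambda))$ either leaves compacta or limits into the non-binding set) and then invokes invariance of domain plus connectedness of the target. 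Injectivity is Step 1. This realizes every binding $\mu$, and the inverse of $(\Re,\Im)$ restricted to the $\lambda$-slice is continuous by invariance of domain.

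\emph{Step 3: Assemble the global homeomorphism.} Having done this for each fixed $\lambda$, I would check the pieces fit together continuously in both variables. The map $(\Re,\Im)\colon \mathcal{QT}_g \to \mathcal{MF}_g \times \mathcal{MF}_g \setminus \Delta$ is continuous (the constructions of $\Re,\Im$ are), injective (Step 1, applied with either foliation fixed), and surjective (Step 2). For the inverse to be continuous, invoke invariance of domain once more: both spaces are manifolds of the same dimension $12g-12$ (here $\mathcal{MF}_g \times \mathcal{MF}_g \setminus \Delta$ is an open subset of the $(12g-12)$-manifold $\mathcal{MF}_g \times \mathcal{MF}_g$, and $\mathcal{QT}_g$ is a manifold of that dimension), so a continuous bijection between them is automatically a homeomorphism.

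\emph{Main obstacle.} The hard part is Step 2 — specifically the properness/surjectivity claim, which requires understanding how the horizontal foliation of the Hubbard--Masur differential degenerates as $X$ approaches the boundary of $\mathcal{T}_g$ (Teichm\"uller/Thurston boundary). One must rule out the escape of $\Phi(X_n)$ to a binding foliation not in the image, and this needs a genuine compactness argument relating extremal length, Teichm\"uller distance, and intersection numbers (e.g.\ via Kerckhoff's formula for Teichm\"uller distance in terms of extremal-length ratios, or via the Gardiner--Masur compactification). Everything else is either a formal consequence of Theorem \ref{theo:HM1} or a standard invariance-of-domain packaging.
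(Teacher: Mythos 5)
The paper does not supply a proof of Theorem~\ref{theo:GM}: it is stated as a background result and cited to Gardiner--Masur \cite{GM91}, so there is no in-text argument to compare against. That said, your sketch follows the standard Gardiner--Masur strategy (fix $\lambda$, study the map $X \mapsto \Re(q(X,\lambda))$, establish it is a proper local homeomorphism onto the binding slice, assemble with invariance of domain), so the overall shape is right. Let me flag where it is underdeveloped.

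Your Step~1 is vaguer than it needs to be and not quite right as phrased. Appealing to ``Gardiner's formula / the first variation of extremal length'' together with an unspecified ``added constraint coming from $\mu$'' does not clearly isolate a functional whose strict convexity yields uniqueness, and the ``heat-flow or length-area comparison'' aside does not resolve this. The cleanest uniqueness argument bypasses extremal length entirely: a half-translation structure is determined by its pair of transverse measured foliations, since the flat coordinates $z = x + iy$ are recovered from the transverse measures of $\mu$ and $\lambda$, and the binding condition ensures these coordinates give a genuine atlas on a closed surface. Thus if $(X_1, q_1)$ and $(X_2, q_2)$ have the same $(\Re, \Im)$-data, the identity map on the common flat structure is a marked isomorphism, forcing $X_1 = X_2$ and then $q_1 = q_2$. (Alternatively: $[\mu]$ and $[\lambda]$ determine a Teichm\"uller geodesic once uniqueness along a fixed $\lambda$-slice is granted by Hubbard--Masur, and the area $i(\mu, \lambda)$ pins down the point on it; but this risks circularity if stated carelessly.)

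The main gap is the one you already identify in Step~2: properness of $\Phi \colon X \mapsto \Re(q(X,\lambda))$ is asserted, not proved. You would need to show concretely that if $X_n \to \infty$ in $\mathcal{T}_g$ then either $\Phi(X_n)$ leaves every compact set of $\mathcal{MF}_g$ or every subsequential limit fails to bind with $\lambda$ (for instance, by extracting a curve $\alpha$ with $\mathrm{Ext}_{X_n}(\alpha) \to \infty$ and tracking the behavior of $i(\Phi(X_n), \alpha)$ and $i(\Phi(X_n), \lambda)$ via the extremal-length/modulus dichotomy on the Jenkins--Strebel cylinders). Without this, surjectivity in Step~2 and hence Step~3 is not established. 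Also note that for the invariance-of-domain packaging in Step~3 you should verify that the binding locus $\mathcal{MF}_g \times \mathcal{MF}_g \setminus \Delta$ is open and connected, which requires an argument (openness of the binding condition follows from continuity of $i(\cdot,\cdot)$ and lower semicontinuity considerations, but you should say so). The dimension count $12g-12$ on both sides is correct.
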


\subsection*{The Teichmüller metric.} The Teichmüller metric $d_\mathrm{Teich}$ on $\mathcal{T}_g$ quantifies the minimal dilation among quasiconformal maps between marked complex structures on $S_g$. More precisely, for every $X,Y \in \mathcal{T}_g$,
\[
d_{\Teich}(X,Y) := \frac{1}{2} \log\left( \inf_{f \colon X \to Y} K(f)\right),
\]
where the infimum runs over all quasiconformal maps $f \colon X \to Y$ in the homotopy class given by the markings of $X$ and $Y$, and where $K(f)$ denotes the dilation of such maps. See \cite[Chapter 11]{FM11} for a more detailed definition. The action of the mapping class group on Teichmüller space is the isometry group of the Teichmüller metric \cite{Royden}. The Teichmüller metric is complete and its geodesics can be described explicitly in terms of the diagonal part of the natural $\mathrm{SL}(2,\mathbb{R})$-action on $\mathcal{Q}^1\mathcal{T}_g$.

%flow on the unit cotangent bundle of Teichmüller space, as we now explain.

In \cite{Ker80}, Kerckhoff proved the following formula for the Teichmüller metric; this formula provides control over the extremal lengths of singular measured foliations in terms of the Teichmüller distance between two marked complex structures.

\begin{theorem}\label{thm:Kerckhoff}
For any pair of marked complex structures $X, Y \in \T_g$, 
\begin{equation}
\label{eq:d}
d_\mathrm{Teich}(X,Y) = \frac{1}{2}
\max_{\lambda \in \MF_g} \log \left( \frac{\Ext_Y(\lambda)}{\Ext_X(\lambda)} \right).
\end{equation}
Furthermore, if $q = q(X,Y) \in S(X)$ is the unit area quadratic differential corresponding to the unique Teichmüller geodesic from $X$ to $Y$, then $\Im(q)$ realizes the maximum in \eqref{eq:d}.
\end{theorem}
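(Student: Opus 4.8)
The plan is to prove the two inequalities in \eqref{eq:d} separately, and to extract the ``furthermore'' clause from the proof of the second one.

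\textbf{Lower bound.} First I would show $d_\mathrm{Teich}(X,Y) \geq \frac{1}{2}\log\bigl(\Ext_Y(\lambda)/\Ext_X(\lambda)\bigr)$ for every fixed $\lambda \in \MF_g$. The key input is the quasi-invariance of extremal length under quasiconformal maps: if $f \colon X \to Y$ is quasiconformal of dilatation $K(f)$ and lies in the homotopy class prescribed by the markings, then $\Ext_Y(\lambda) \leq K(f)\,\Ext_X(\lambda)$. For weighted simple closed curves this is the classical distortion estimate for conformal moduli of annuli applied to the geometric definition \eqref{eq:ext2} (pushing annuli forward through $f^{-1}$ to get the bound in this direction); for general $\lambda$ it follows by density of weighted simple closed curves in $\MF_g$ together with continuity of $\lambda \mapsto \Ext_X(\lambda)$. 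Taking the infimum over all such $f$ yields $\Ext_Y(\lambda)/\Ext_X(\lambda) \leq e^{2 d_\mathrm{Teich}(X,Y)}$, and since $\lambda$ was arbitrary the inequality $d_\mathrm{Teich}(X,Y) \geq \frac{1}{2}\max_\lambda \log(\cdots)$ follows.

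\textbf{Upper bound and maximizer.} Let $q = q(X,Y) \in S(X)$ be the initial unit-area differential of the unique Teichmüller geodesic from $X$ to $Y$ (this uses the existence and uniqueness of Teichmüller maps), and set $d := d_\mathrm{Teich}(X,Y)$, so that $Y$ is the image of $X$ under the Teichmüller map with initial differential $q$ and dilatation $e^{2d}$. In natural coordinates $z = x+iy$ for $q$ this map is $z \mapsto e^{d}x + i e^{-d}y$; it is area-preserving and carries the horizontal foliation $\Im(q)$ to the horizontal foliation of the terminal differential $q' \in Q(Y)$, scaling its transverse measure by $e^{-d}$. Hence, under the identification of foliations induced by the markings, $\Im(q') = e^{-d}\Im(q)$ in $\MF_g$, so the Hubbard--Masur differential on $Y$ with horizontal foliation equal to $\Im(q)$ is $e^{2d}q'$, which has area $e^{2d}\|q'\| = e^{2d}\|q\|$. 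On the other hand $\Ext_X(\Im(q)) = \|q\|$ by the uniqueness clause of Theorem \ref{theo:HM1} (the Hubbard--Masur differential realizing $\Im(q)$ on $X$ is $q$ itself). Therefore
\[
\frac{1}{2}\log\!\left(\frac{\Ext_Y(\Im(q))}{\Ext_X(\Im(q))}\right) = \frac{1}{2}\log\bigl(e^{2d}\bigr) = d = d_\mathrm{Teich}(X,Y),
\]
which is the reverse inequality and simultaneously exhibits $\Im(q)$ as a maximizer in \eqref{eq:d}.

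\textbf{Main obstacle.} The crux is the quasi-invariance of extremal length of \emph{arbitrary} measured foliations, not merely simple closed curves, under quasiconformal maps; this drives the lower bound. I would either reduce it to the curve case via the density/continuity argument above, or argue directly from the analytic definition \eqref{eq:ext1}, being careful to allow Borel conformal densities when pulling metrics back through $f$ and to control the area distortion. A secondary point requiring vigilance is the bookkeeping of how foliations on $X$ and $Y$ are identified---always via the markings, equivalently via $f$---so that the scaling factors $e^{\pm d}$ appear with the correct signs and the final ratio comes out to $e^{2d}$ rather than its reciprocal.
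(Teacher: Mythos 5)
Your argument is correct and follows the standard route to Kerckhoff's formula. The paper itself offers no proof of Theorem~\ref{thm:Kerckhoff} --- it cites \cite{Ker80} and then only sketches the ``furthermore'' claim --- but that sketch (``$\lambda = \Im(q)$ has unit extremal length \dots its image $g_tq$ \dots still has area 1 and $\Im(g_t q)=e^{-t}\lambda$, thus $\lambda$ has extremal length $e^{2t}$'') is precisely your computation that $\Ext_Y(\Im(q))/\Ext_X(\Im(q)) = e^{2d}$, so the two agree where they overlap. The lower bound via quasi-invariance of extremal length, $\Ext_Y(\lambda) \le K(f)\,\Ext_X(\lambda)$, combined with the homogeneity-then-density argument to pass from weighted curves to all of $\MF_g$, is exactly Kerckhoff's.

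One small slip: in the parenthetical justifying $\Ext_Y(\gamma) \le K(f)\,\Ext_X(\gamma)$ you say one pushes annuli ``forward through $f^{-1}$.'' It should be forward through $f$: one takes an annulus $C \subset X$ nearly realizing $1/\Ext_X(\gamma)$, applies the $K$-quasiconformal map $f$ to get $f(C) \subset Y$ with $\mathrm{mod}(f(C)) \ge \mathrm{mod}(C)/K$, and concludes $\Ext_Y(\gamma) \le 1/\mathrm{mod}(f(C)) \le K/\mathrm{mod}(C)$. The inequality you invoke is correct; only the description of which direction the annuli travel is off.
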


The fact that $\Im(q)$ realizes the supremum (as opposed to $\Re(q)$) is due to the fact that Teichm{\"u}ller mappings stretch the leaves of $\Im(q)$ while shrinking its measure.
For example, suppose $q\in S(X)$; then $\lambda = \Im(q)$ has unit extremal length. Then its image $g_t q$ under the Teichm{\"u}ller geodesic flow still has area 1 and $\Im(g_t q) = e^{-t} \lambda$. Thus, $\lambda$ has extremal length $e^{2t}$ on $g_t q$.

\subsection*{Metric ribbon graphs.}
A ribbon graph is the combinatorial data of a deformation retraction of a surface with boundary.
More formally, it is a (simplicial) graph $\Gamma$ equipped with a cyclic ordering of the edges at each vertex; this can be reconciled with the first notion by thickening each edge to a ribbon and gluing the edges of the ribbons according to the cyclic ordering.
The genus and number of boundary components of $\Gamma$ are the values for the resulting topological surface. One may equip a ribbon graph $\Gamma$ with a metric $\mathbf{x}$ assigning a length to each of its edges to obtain a so-called metric ribbon graph $(\Gamma,\mathbf{x})$.

For $b \ge 0$ a non-negative integer, denote by $\MRG_{g,b}$ the moduli space of all metric ribbon graphs with genus $g$ and $b$ distinctly-labeled boundary components, all of whose vertices have valence at least three. 
For a given tuple $\bfL = (L_1, \ldots, L_b)$ of positive numbers, define $\MRG_{g,b}(\bfL) \subseteq \mathcal{MRG}_{g,b}$ to be the subset of all ribbon graphs whose (labeled) boundary components have lengths $L_1, \ldots, L_b$. The slices $\MRG_{g,b}(\bfL)$ piece together to form a fibration
$\MRG_{g,b} \to \smash{\RR_{>0}^{b}}$.

\subsection*{Critical graphs.} Given a marked complex structure $X \in \mathcal{T}_g$ and a simple closed curve $\gamma$ on $S_g$, the critical graph of the Jenkins-Strebel differential $q = \mathrm{JS}(X,\gamma) \in Q(X)$ is the metric ribbon graph obtained as the union of the critical leaves of $\Im(q) \in \mathcal{MF}_g$ endowed with the restriction of the singular flat metric induced by $q$ on $X$. A similar construction can be carried out for general multi-curves, but to get a well defined map to a corresponding moduli space of metric ribbon graphs, care needs to be exercised with regards to the symmetries of the construction.

For the rest of this discussion let $\vg := (\vg_1,\dots,\vg_k)$ be an ordered, oriented simple closed multi-curve on $S_g$, let $\mathrm{Stab}_0(\vg) \subseteq \mathrm{Mod}_g$ be its oriented stabilizer, i.e. the set of mapping classes that fix each component of $\vg$ together with its orientation, and let $\gamma \in \mathcal{MF}_g$ be its equivalence class as a singular measured foliation. Cutting $S_g$ along the components of $\vg$ yields a (possibly disconnected) surface with boundary $S_g \setminus \vg$. Label the components of $S_g \setminus \vg$ by $(\Sigma_j)_{j=1}^c$; such a labeling is possible because the components of $\gamma$ are labeled and oriented. For each $j \in \{1,\dots,c\}$ let $g_j,b_j \geq 0$ to be non-negative integers such that $\Sigma_j$ is homeomorphic to $\smash{S_{g_j,b_j}}$.

For any length vector $\bfL \in \mathbb{R}_{>0}^k$ we set
\[
\MRG(S_g \setminus \agamma; \bfL) :=
\prod_{j=1}^c \MRG_{g_j,b_j} \left(\bfL^{(j)}\right)
\]
where $\bfL^{(j)}$ denotes the lengths corresponding to the boundary components of $\Sigma_j$.

These slices fit together into a larger moduli space $\MRG(S_g \setminus \agamma)$ which can be topologized through its natural embedding into a product of combinatorial moduli spaces with variable boundary lengths. Denote by $\Delta \subseteq \mathbb{R}_{>0}^k$ the open simplex
\[
\Delta := \{\bfL \in \mathbb{R}_{>0}^k |  L_1 + \dots + L_k = 1\}
\]
and let $\MRG(S_g \setminus \agamma;\Delta)$ denote the total space of the fibration over $\Delta$ whose fiber above $\bfL \in \Delta$ is $\MRG(S_g \setminus \agamma; \bfL)$; this can be thought of as the ``projectivization'' of the full moduli space $\MRG(S_g \setminus \agamma)$ under the natural rescaling action.

For any marked complex structure $X \in \T_g$, the critical graph of $\mathrm{JS}(X,\gamma)$ is the metric ribbon graph obtained as the union of the critical leaves of $\Im(q) \in \mathcal{MF}_g$ endowed with the restriction of the singular flat metric induced by $q$ on $X$. Notice this graph has one connected component for each component of $S_g \setminus \vg$. Using the labeling of these components one can define the {\em critical graph map}
\[
\Xi(\cdot,\agamma) \colon \mathcal{T}_g \to \mathcal{MRG}(S_g \setminus \vg).
\]
Furthermore, after rescaling the metric of the critical graph $\Xi(X,\vg) \in \mathcal{MRG}(S_g \setminus \vg)$ by $1/\mathrm{Ext}_X(\gamma)$ one obtains the unit length critical graph map
\[
\Xi^1(\cdot,\vg) \colon \mathcal{T}_g \to \mathcal{MRG}(S_g \setminus \vg;\Delta).
\]
Since $\Stab_{0}(\vg)$ preserves labelings of complementary subsurfaces, the maps $\Xi(\cdot,\vg)$ and $\Xi^1(\cdot,\vg)$ are $\mathrm{Stab}_0(\vg)$-invariant.

\section{Near the multiple zero locus}
\label{sec:msl}

\subsection*{Outline of this section.} In this section we show that the Jenkins--Strebel differentials of most simple closed multi-curves lie deep within the principal stratum; see Proposition \ref{prop:key_estimate} for a precise statement. This control is crucial to apply the bounds on the variation of critical graphs proved in \S\ref{sec:weightsvaryA}; compare to Proposition \ref{prop:varA}. The main tools used in this section are the theory of Delaunay triangulations of quadratic differentials developed in work of Masur and Smillie \cite{MS91} and dual train tracks to triangulations.

\subsection*{Triangulations of quadratic differentials.} For the rest of this paper fix $g \geq 2$ and let $S_g$ be a compact, connected, oriented surface of genus $g \geq 2$.
By a \textit{triangulation} of a quadratic differential $q$ we mean a triangulation of its underlying Riemann surface whose edges are saddle connections of $q$ (and hence whose vertices are zeros of $q$).
A triangulation of a quadratic differential $q$ is said to be \textit{$L$-bounded} for some $L > 0$ if its edges have flat length $\leq L$.

Given a marked quadratic differential $q \in \QoT$ and a triangulation $\Delta'$ of $q$, one can pull back $\Delta'$ via the marking map to obtain an isotopy class of triangulation $\Delta$ on $S_g$.
In particular, because we are not marking the zeros of $q$, the vertices of the triangulation $\Delta$ are not fixed.
Recall that the mapping class group $\Mod_g$ acts properly discontinuously on $\QoT$ and $\T_g$ by changing markings.
It also acts on the set of isotopy classes of triangulations of $S_g$ by applying the mapping class and the association described above is equivariant.

Recall that $\pi: \QoT \to \T_g$ denotes the forgetful map.

\begin{lemma}
	\label{lem:triang_compact}
	Let $\Delta$ be an isotopy class of triangulation of $S_g$, let $\mathcal{K} \subseteq \tt$ be a compact subset, and $L > 0$. Then the subset of marked quadratic differentials $q \in  \pi^{-1}(\mcg \cdot\mathcal{K})$ having an $L$-bounded triangulation $\Delta'$ that pulls back to $\Delta$ via the marking of $q$ has compact closure in $\QoT$.
\end{lemma}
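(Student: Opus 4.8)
The plan is to reduce the statement to a properness argument for the marking map combined with a compactness criterion in the stratum. First I would fix a differential $q$ in the set in question, with an $L$-bounded triangulation $\Delta'$ pulling back to $\Delta$, and with $\pi(q)$ in $\mcg\cdot\mathcal K$; after applying a mapping class (which changes nothing about the conclusion, since the mapping class group acts on $\QoT$ by homeomorphisms and preserves $\mcg\cdot\mathcal K$) I may assume $\pi(q)\in\mathcal K$. So it suffices to show that the set of $q\in\pi^{-1}(\mathcal K)$ admitting an $L$-bounded triangulation in the isotopy class $\Delta$ is precompact in $\QoT$. Since $\pi$ is continuous and $\mathcal K$ is compact, it is enough to control $q$ within each fiber: I would show that for each $X\in\mathcal K$ the relevant set of differentials is precompact, and that this precompactness is locally uniform in $X$, so that a diagonal/exhaustion argument over a finite cover of $\mathcal K$ closes the argument.

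The core of the matter is the following. Fix $X$ (or a small neighborhood of $X$ in $\mathcal K$). A triangulation $\Delta$ of $S_g$ has a fixed finite set of edges $e_1,\dots,e_N$; realizing $\Delta'$ as a triangulation of $q$ by saddle connections means that each $e_i$ is isotopic to a saddle connection of $q$, and hence the holonomy vector $v_i(q)\in\CC$ (well-defined up to sign, after choosing a branch of $\sqrt q$ along $e_i$, or honestly as an element of $\CC/\pm$) satisfies $|v_i(q)|\le L$. Conversely, the data of these holonomy vectors, subject to the linear relations imposed by the triangulation (each triangle closes up: the signed sum of its three edge vectors is zero) and to the orientation/non-degeneracy conditions (each triangle is nondegenerate), determines $q$ together with its flat structure; this is the standard period-coordinate description of a stratum, adapted to half-translation surfaces. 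The unit-area normalization $\|q\|=1$ then says the total area computed from the $v_i$ equals $1$. Thus within the fiber over $X$, the set of such $q$ is parametrized by a subset of $\{(v_1,\dots,v_N)\in\CC^N : |v_i|\le L\}$ cut out by the closed conditions of area $1$ and by isotopy to a differential on $X$; this is a bounded set, hence has compact closure in the period coordinates, and therefore in $\QoT$. The only subtlety is that "isotopic to a saddle connection of $q$" is an open condition on $q$ in period coordinates (the saddle connection could degenerate in the limit), so a priori a limit point of such $q$ might fail to carry $\Delta$ as a genuine triangulation; but that is fine, because the lemma only asserts that the \emph{closure} of the set is compact, not that the set is closed. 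What one does need is that the area function and the bound $|v_i|\le L$ pass to limits, which they do by continuity.

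Concretely, here is the argument I would write. Cover $\mathcal K$ by finitely many open sets $U_1,\dots,U_m$ over each of which $\QoT$ is trivialized compatibly with period coordinates for the relevant (possibly non-principal) stratum, uniformly in the base; shrinking if necessary, assume the closures $\overline{U_r}$ are compact and contained in slightly larger trivializing neighborhoods. On the piece of $\QoT$ lying over $U_r$, the condition that $q$ admit an $L$-bounded triangulation isotopic to $\Delta$ forces the period vector of $q$ to lie in the fixed compact set $\{|v_i|\le L\ \forall i\}$ intersected with the locus of area $1$; since $\overline{U_r}$ is compact and the trivialization is proper onto its image, the corresponding set of $q$ has compact closure in the part of $\QoT$ over $\overline{U_r}$. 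Taking the union over $r=1,\dots,m$ gives a compact set containing the whole set in the fiber direction over $\mathcal K$; finally, undoing the initial mapping-class reduction, the full set in $\pi^{-1}(\mcg\cdot\mathcal K)$ is the $\mcg$-translate of a precompact set mapping into the single compact set $\mathcal K$, and since $\mcg$ acts properly discontinuously on $\QoT$ and only finitely many translates of $\mathcal K$ meet any compact set, the closure of the whole set is compact. The main obstacle — and the step that needs care — is the bookkeeping of the period-coordinate trivialization being \emph{uniform over the compact set $\mathcal K$ in the base}, so that "bounded holonomy" really does translate into "lives in a fixed compact subset of $\QoT$"; once that is set up, properness of $\pi$ and proper discontinuity of the $\mcg$-action do the rest.
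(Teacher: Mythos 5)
Your proof has a genuine gap, and it concerns the opening reduction and its attempted inverse at the end.

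You begin by ``applying a mapping class'' to assume $\pi(q) \in \mathcal K$. But the hypothesis on $q$ is not $\mcg$-invariant: it references a \emph{fixed} isotopy class $\Delta$, and applying $\phi \in \mcg$ transports the triangulation condition to $\phi^{-1}\Delta$. So the reduced problem is not ``the set over $\pi^{-1}(\mathcal K)$ with triangulation class $\Delta$,'' but rather a union over all $\phi\in\mcg$ of sets $S_\phi \subseteq \pi^{-1}(\mathcal K)$ cut out by the triangulation class $\phi^{-1}\Delta$. Each such $S_\phi$ is indeed precompact (trivially, since $\pi^{-1}(\mathcal K)$ is already compact, so the period-coordinate discussion is beside the point there), but the original set is $\bigcup_\phi \phi \cdot S_\phi$, and you have not shown that only finitely many of the $S_\phi$ are nonempty. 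The final sentence, which tries to undo the reduction by citing proper discontinuity, is where the argument breaks down: the $\mcg$-translate of a precompact set is essentially never precompact (consider $\ZZ$ acting on $\RR$ and translating $[0,1]$), and proper discontinuity only says that any \emph{fixed} compact set meets finitely many translates — it gives no bound on where the translates land. What you would need, and have not supplied, is a direct argument that the set of $\phi$ for which $S_\phi\neq\emptyset$ is finite; without it the argument collapses exactly at the step you flag as the ``main obstacle.''

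The paper's proof avoids the reduction entirely and runs in the opposite direction. Because $\Delta'$ is $L$-bounded and pulls back to the fixed class $\Delta$, every simple closed curve $\alpha$ on $S_g$ can be homotoped into the $1$-skeleton of $\Delta$ using a number of edges depending only on $\alpha$ and $\Delta$, so its flat length on $q$ is bounded in terms of $\alpha$, $\chi(S_g)$, and $L$. Since $\pi(q) \in \mcg\cdot\mathcal K$, the underlying Riemann surface lies over a compact set in $\M_g$, so the comparison between the unit-area flat metric of $q$ and the hyperbolic uniformization is uniformly controlled; this converts the flat length bound into a bound on the hyperbolic length of $\alpha$ at $\pi(q)$. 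Bounding the hyperbolic lengths of (sufficiently many) simple closed curves pins $\pi(q)$ to a compact subset of $\T_g$, and then compactness of the fibers of $\pi:\QoT\to\T_g$ gives the conclusion. Note that this directly produces a single compact subset of $\T_g$ containing all the $\pi(q)$ in question, which is precisely the control your $\mcg$-translation step fails to deliver.

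One secondary comment: the period-coordinate trivialization over $U_r \subseteq \mathcal K$ you invoke is not quite well-posed, because period coordinates are charts on strata of $\QoT$, not on $\pi^{-1}(U_r)$, and the fiber $\pi^{-1}(X)$ meets every stratum. That said, this is a minor point compared to the reduction issue above, since (as noted) once you are over a compact subset of the base the fiberwise compactness is automatic.
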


\begin{proof}
	Let $q \in  \pi^{-1}(\mcg \cdot\mathcal{K})$ and $\Delta'$ be an $L$-bounded triangulation of $q$ which pulls back to $\Delta$ via the marking of $q$. Fix a simple closed curve $\alpha$ on $S_g$. As $\Delta'$ is $L$-bounded and pulls back to $\Delta$ via the marking of $q$, one can bound the flat length $\ell_\alpha(q)$ of any geodesic representatives of $\alpha$ on $q$ uniformly in terms $\alpha$, $\chi(S_g)$, and $L$.
 This together with the fact that $q \in  \pi^{-1}(\mcg \cdot\mathcal{K})$ implies the hyperbolic length $\ell_\alpha(\pi(q))$ of the unique geodesic representative of $\alpha$ with respect to the marked hyperbolic structure on $S_g$ induced by $\pi(q) \in \mathcal{T}_g$ via uniformization can be bounded uniformly in terms of $\alpha$, $\chi(S_g)$, $L$, and $\mathcal{K}$. As the bundle $\pi \colon \QoT \to \tt$ has compact fibers and as the only was of escaping to infinity in $\mathcal{T}_g$ is to develop a simple closed curve of unbounded hyperbolic length, this finishes the proof.
\end{proof}

\subsection*{Delaunay triangulations of quadratic differentials.} For every quadratic differential $q$ denote by $\ell_{\min}(q)$ the length of its shortest saddle connections and by $\mathrm{diam}(q)$ its diameter. Every quadratic differential $(X,q)$ admits a triangulation by saddle connections which is \textit{Delaunay} with respect to the singularities of $q$ and the singular flat metric induced by $q$ on $X$ \cite[\S 4]{MS91}. We refer to any such triangulation as a \textit{Delaunay triangulation} of $q$. For the purposes of this discussion we will not need to appeal to the explicit construction of these triangulations. Rather, it will suffice to know they exist and satisfy the following properties.

\begin{lemma}
	\cite[Lemma 3.11]{ABEM12} 
	\label{lem:delaunay}
	Let $q \in \QoT$ and $\Delta$ be a Delaunay triangulation of $q$. Let $\gamma$ be a saddle connection of $q$.
	\begin{enumerate}
		\item If $\gamma$ belongs to $\Delta$, then $\ell_{\gamma}(q) \preceq_g \mathrm{diam}(q)$.
		\item If $\ell_{\gamma}(q) \leq \sqrt{2} \cdot \ell_{\min}(q)$, then $\gamma$ belongs to $\Delta$.
	\end{enumerate}
\end{lemma}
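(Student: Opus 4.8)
The result is \cite[Lemma 3.11]{ABEM12}; the proof I have in mind runs as follows. Both parts rest on the defining property of a Delaunay triangulation: developing any triangle $T \in \Delta$ into the Euclidean plane, the open disk circumscribing it contains no zeros of $q$. I would first record two elementary facts about the singular flat metric induced by $q$. First, a round metric disk of $q$ whose interior contains no zeros develops to an isometrically embedded Euclidean disk in $X$; in particular a disk of radius $R$ with empty interior has area $\pi R^2 \le \|q\|$. Second, any two distinct zeros of $q$ lie at distance at least $\ell_{\min}(q)$, since a shortest geodesic joining them is a concatenation of saddle connections.

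For part (1), suppose $\gamma \in \Delta$. Then $\gamma$ is a side of some triangle $T \in \Delta$, whose circumscribed disk $D_T$ has empty interior and so embeds isometrically in $X$ by the first fact; if $R$ denotes its radius then $\pi R^2 \le \|q\| = 1$, and since $\gamma$ is a chord of the circumcircle of $T$ we get $\ell_\gamma(q) \le 2R \le 2/\sqrt\pi$. On the other hand $\mathrm{diam}(q) \succeq_g 1$: as $q$ has at most $4g-4$ zeros, whose cone angles are bounded multiples of $\pi$, the metric ball of radius $\delta$ about any point has area at most $C_g\,\delta^2$, so a flat surface of area $1$ and genus $g$ has diameter $\succeq_g 1$. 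Combining the two bounds gives $\ell_\gamma(q) \preceq_g \mathrm{diam}(q)$.

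For part (2), suppose $\ell := \ell_\gamma(q) \le \sqrt2\,\ell_{\min}(q)$; let $p, r$ be the endpoints of $\gamma$, let $m$ be its midpoint, and let $D$ be the developed disk of radius $\ell/2$ about $m$, so that $\gamma$ is a diameter of $D$. If $x$ were a zero of $q$ in the interior of $D$, then expanding $|px|^2 + |rx|^2$ via the law of cosines at $m$ — using $|mp| = |mr| = \ell/2$ and that the two angles at $m$ are supplementary — yields
\[
|px|^2 + |rx|^2 \;=\; \tfrac{\ell^2}{2} + 2|mx|^2 \;<\; \tfrac{\ell^2}{2} + 2\left(\tfrac{\ell}{2}\right)^2 \;=\; \ell^2 \;\le\; 2\,\ell_{\min}(q)^2,
\]
so $\min(|px|, |rx|) < \ell_{\min}(q)$, contradicting the second fact since $x \notin \{p, r\}$. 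Hence the interior of $D$ is empty; by the first fact $D$ embeds, exhibiting $\gamma$ as a chord — in fact a diameter, hence automatically realized by the straight segment — of an embedded empty disk, which is exactly the condition for $\gamma$ to be an edge of a Delaunay triangulation. The same estimate applied to a zero on $\partial D$ shows that, outside the boundary case $\ell = \sqrt2\,\ell_{\min}(q)$, no zeros other than $p, r$ lie on $\partial D$; there is thus no cocircularity and $\gamma$ belongs to \emph{every} Delaunay triangulation, the boundary case being absorbed by the tie-breaking inherent in the construction of \cite[\S4]{MS91}.

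I expect the genuinely delicate point to be the first recorded fact — that an empty round disk embeds isometrically — since all of the area and chord estimates above are carried out in a planar development and only acquire meaning once injectivity of that development is known; this is exactly the output of the Delaunay/Voronoi machinery of \cite[\S4]{MS91}. Treating the cocircular boundary case of part (2) is the other point requiring care, though it is immaterial for the applications of the lemma.
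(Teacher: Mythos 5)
Your argument for part (2) is the same as the one in \cite[Lemma 3.11]{ABEM12}: exhibit the circle with $\gamma$ as a diameter, and use the median identity $|px|^2 + |rx|^2 = \ell^2/2 + 2|mx|^2$ to show that any singularity developing strictly inside that circle would lie within $\ell_{\min}(q)$ of an endpoint of $\gamma$. That part is correct.

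Part (1), however, has a genuine gap, located exactly where you flagged it. Your ``first recorded fact'' --- that an empty round metric disk embeds isometrically, hence has area $\pi R^2 \le \|q\|$ --- is false for Delaunay disks in general. The circumscribed disk of a Delaunay triangle contains no singularities in its interior, but it can still be a highly non-injective immersion. Concretely, take a unit-area flat surface that looks like a long thin cylinder of height $H$ and circumference $1/H$ capped off at each end by small pieces carrying all the zeros. There is a Voronoi vertex near the middle of the cylinder, at distance roughly $H/2$ from the nearest zeros, so the circumradius of the corresponding Delaunay cell is about $H/2$, while the developed disk of that radius wraps around the thin cylinder on the order of $H^2$ times. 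Thus $\pi R^2 \gg \|q\|=1$ even though the interior is empty, and your chord bound $\ell_\gamma \le 2/\sqrt{\pi}$ fails (in this example the long Delaunay edges across the cylinder have length comparable to $H \gg 1$). The Masur--Smillie machinery you cite establishes that such disks are immersed, not embedded, so it does not rescue the step. The intended argument for (1) is much shorter and avoids any area consideration: the circumcenter of a Delaunay triangle is a vertex of the Voronoi decomposition, so its circumradius equals its distance to the nearest zero, which is trivially at most $\mathrm{diam}(q)$; since every edge of the triangle is a chord of the circumcircle, $\ell_\gamma(q) \le 2R \le 2\,\mathrm{diam}(q)$. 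This also makes your auxiliary lower bound $\mathrm{diam}(q) \succeq_g 1$ unnecessary.
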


Directly from Lemmas \ref{lem:triang_compact} and \ref{lem:delaunay} we deduce the following result.

\begin{proposition}
	\label{prop:delaunay_comp}
	For every compact subset $\mathcal{K} \subseteq \mathcal{T}_g$ there exists a constant $L = L(\mathcal{K}) > 0$ and a finite collection of isotopy classes of triangulations $\{\Delta_i\}_{i=1}^n$ on $S_g$ with the following property. Let $q \in \pi^{-1}(\mathcal{K})$ and $\gamma$ be a saddle connection of $q$ attaining the minimal flat length among saddle connections of $q$. Then, there exists $i \in \{1,\dots,n\}$ and an $L$-bounded triangulation $\Delta'$ of $q$ having $\gamma$ as one of its edges and which pulls back to $\Delta_i$ via the marking of $q$.
\end{proposition}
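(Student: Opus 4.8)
The plan is to take $\Delta'$ to be a Delaunay triangulation of $q$ and to obtain the finite list $\{\Delta_i\}$ from a compactness argument. First I would fix the constant. Since $\pi\colon\QoT\to\T_g$ is proper with compact fibers, $\pi^{-1}(\mathcal{K})$ is compact, so the diameter function is bounded on it (a standard consequence of compactness), say $\mathrm{diam}(q)\le D$ for all $q\in\pi^{-1}(\mathcal{K})$. By Lemma \ref{lem:delaunay}(1) there is a genus-dependent constant $C_g$ so that every edge of every Delaunay triangulation of every $q\in\pi^{-1}(\mathcal{K})$ has flat length at most $C_g\,\mathrm{diam}(q)\le C_gD=:L$; this $L=L(\mathcal{K})$ will be the desired constant. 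Moreover, if $\gamma$ is a saddle connection of $q$ of minimal length then $\ell_\gamma(q)=\ell_{\min}(q)\le\sqrt2\,\ell_{\min}(q)$, so by Lemma \ref{lem:delaunay}(2) $\gamma$ is an edge of every Delaunay triangulation of $q$. Hence for every $q\in\pi^{-1}(\mathcal{K})$ and every minimal-length $\gamma$, a Delaunay triangulation of $q$ is an $L$-bounded triangulation of $q$ having $\gamma$ as an edge; pulling it back via the marking of $q$ gives an isotopy class of triangulation of $S_g$. Let $\mathcal{D}$ be the (a priori infinite) set of all isotopy classes obtained this way as $q$ ranges over $\pi^{-1}(\mathcal{K})$ and over its Delaunay triangulations. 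Taking $\{\Delta_i\}_{i=1}^n$ to be an enumeration of $\mathcal{D}$, the proposition reduces to showing $\mathcal{D}$ is finite.

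To prove this, suppose not: there are $q_j\in\pi^{-1}(\mathcal{K})$ with Delaunay triangulations $\Delta_j'$ whose pullbacks $\Delta^{(j)}$ to $S_g$ are pairwise distinct. The vertices of $\Delta^{(j)}$ are zeros of $q_j$, so there are at most $4g-4$ of them, and a triangulation of $S_g$ on a bounded number of vertices has one of only finitely many combinatorial types; since two isotopy classes of triangulations of the same combinatorial type differ by an element of $\mathrm{Mod}_g$, the $\Delta^{(j)}$ meet only finitely many $\mathrm{Mod}_g$-orbits. Passing to a subsequence along which all $\Delta^{(j)}$ lie in a single orbit, we may write $\Delta^{(j)}=\phi_j\cdot\Delta_\star$ for a fixed triangulation $\Delta_\star$ and $\phi_j\in\mathrm{Mod}_g$, and since the $\Delta^{(j)}$ remain pairwise distinct the $\phi_j$ take infinitely many values. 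Now put $q_j':=\phi_j^{-1}\cdot q_j\in\pi^{-1}(\mathrm{Mod}_g\cdot\mathcal{K})$; by $\mathrm{Mod}_g$-equivariance of the pullback construction, $\Delta_j'$ is still an $L$-bounded triangulation of $q_j'$ and it pulls back via the marking of $q_j'$ to $\phi_j^{-1}\cdot\Delta^{(j)}=\Delta_\star$. By Lemma \ref{lem:triang_compact} applied with the fixed class $\Delta_\star$, the compact set $\mathcal{K}$, and the bound $L$, the sequence $(q_j')$ is precompact in $\QoT$; after passing to further subsequences, $q_j'\to q_\infty'$ and also $q_j\to q_\infty$ inside the compact set $\pi^{-1}(\mathcal{K})$. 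Then $\phi_j^{-1}$ maps a fixed compact neighborhood of $q_\infty$ into a fixed compact neighborhood of $q_\infty'$ for all large $j$, so proper discontinuity of the $\mathrm{Mod}_g$-action on $\QoT$ forces only finitely many distinct $\phi_j^{-1}$ along the tail, contradicting that the $\phi_j$ take infinitely many values.

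The genuinely substantive step is this finiteness argument; the bound on $L$ and the fact that $\gamma$ lies in every Delaunay triangulation are immediate from Lemma \ref{lem:delaunay}, and the only points requiring a careful (but routine) check are that pulled-back Delaunay triangulations meet finitely many $\mathrm{Mod}_g$-orbits and that replacing $q_j$ by $\phi_j^{-1}q_j$ preserves $L$-boundedness while normalizing the pullback to $\Delta_\star$ — both following from equivariance together with the fact that $\Delta_j'$ is an intrinsic flat object on the underlying surface.
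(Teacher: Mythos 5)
Your proof is correct and follows essentially the same path as the paper's: bound $L$ via Lemma \ref{lem:delaunay}(1) and the compactness of $\pi^{-1}(\mathcal{K})$, get $\gamma$ into every Delaunay triangulation via Lemma \ref{lem:delaunay}(2), note there are only finitely many $\mathrm{Mod}_g$-orbits of triangulations on a bounded vertex set, and then combine Lemma \ref{lem:triang_compact} with proper discontinuity of the $\mathrm{Mod}_g$-action on $\QoT$ to conclude finiteness of isotopy classes. You spell out the compactness-plus-proper-discontinuity step in more detail than the paper does, but the underlying argument is identical.
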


\begin{proof}
Fix a compact subset $\mathcal{K} \subseteq \mathcal{T}_g$. 
By Lemma \ref{lem:delaunay}.1, there exists a constant $L = L(\mathcal{K}) > 0$ such that every Delaunay triangulation of a quadratic differential in $\pi^{-1}(\mathcal{K})$ is $L$-bounded. 
Moreover, by Lemma \ref{lem:delaunay}.2, these triangulations always contain the shortest saddle connections of $q$.
Thus, it suffices to show these triangulations pull back to finitely many triangulations on $S_g$.
This follows because triangulations of quadratic differentials in $\QoT$ have at most $4g-4$ vertices, so up to the action of the mapping class group there are finitely many combinatorial types of triangulations on $S_g$ that can arise.
Applying Lemma \ref{lem:triang_compact} and the proper discontinuity of the $\mcg$ action on $\QoT$, we see that there are only are finitely many isotopy classes of triangulations on $S_g$ that are $L$-bounded on some $q \in \pi^{-1}(\mathcal{K})$. This finishes the proof.
\end{proof}

\begin{remark}
We note that this statement is false if we mark the zeros of $q$ as well as the underlying surface.
The zeros can braid around each other while $q$ remains in a compact subset of $\QoT$, yielding infinitely many distinct triangulations that differ by the surface braid group.
This reflects the fact that the intersection of $\pi^{-1}(\mathcal K)$ with any (non-minimal) stratum is not compact.
\end{remark}

\subsection*{Train track coordinates.} We now discuss some aspects of the theory of train track coordinates. For more details we refer the reader to \cite{PH92}. A \textit{train track} $\tau$ on $S_g$ is an embedded $1$-complex satisfying the following conditions:
\begin{enumerate}
	\item Each edge of $\tau$ is a smooth path with a well defined tangent vector at each endpoint. All edges at a given vertex are tangent.
	\item For each component $R$ of $S_g \setminus \tau$, the double of $R$ along the smooth part of the boundary $\partial R$ has negative Euler characteristic.
\end{enumerate}
The vertices of $\tau$ where three or more edges meet are called \textit{switches}. By considering the inward pointing tangent vectors of the edges incident to a switch, one can divide these edges into \textit{incoming} and \textit{outgoing} edges. A train track $\tau$ on $S_g$ is said to be \textit{maximal} if all the components of $S_g \setminus \tau$ are trigons, i.e., the interior of a disc with three non-smooth points on its boundary.

A singular measured foliation $\lambda \in \MF_g$ is said to be {\em carried} by a train track $\tau$ on $S_g$ if it can be obtained by collapsing the complementary regions in $S_g$ of a measured foliation of a tubular neighborhood of $\tau$ whose leaves run parallel to the edges of $\tau$. In this situation, the invariant transverse measure of $\lambda$ corresponds to a counting measure $v$ on the edges of $\tau$ satisfying the \textit{switch conditions}: at every switch of $\tau$ the sum of the measures of the incoming edges equals the sum of the measures of the outgoing edges. Every $\lambda \in \MF_g$ is carried by some maximal train track $\tau$ on $S_g$.

Given a maximal train track $\tau$ on $S_g$, denote by $V(\tau) \subseteq \smash{(\mathbf{R}_{\geq0})^{18g-18}}$ the $6g-6$ dimensional closed cone of non-negative counting measures on the edges of $\tau$ satisfying the switch conditions. The set $V(\tau)$ can be identified with the closed cone $U(\tau) \subseteq \MF_g$ of singular measured foliations carried by $\tau$. These identifications give rise to coordinates on $\MF_g$ called \textit{train track coordinates}. The transition maps of these coordinates are piecewise integral linear. In particular, $\MF_g$ can be endowed with a natural $6g-6$ dimensional piecewise integral linear structure where the integral points $\MF_g(\mathbb Z)$ correspond to integrally-weighted simple closed multi-curves.

\subsection*{Train tracks dual to triangulations.} 
We now use our discussion of triangulations of $q \in \pi^{-1}(\mathcal K)$ to control what the horizontal foliations can coarsely look like.
We begin by recalling the construction of a train track dual to a triangulation; compare 
\cite{Mir08a}, \cite{Ara22}, \cite{CF}, and \cite{CF2}.

Let $\Delta$ be an isotopy class of triangulation on $S_g$. On each of the triangles of $\Delta$ consider a $1$-complex as in Figure \ref{fig:complex_1}; the edges of this complex that do not intersect the sides of the triangle will be referred to as \textit{inner edges}. Join these complexes along the edges of $\Delta$ as in Figure \ref{fig:complex_2} to obtain a complex on $S_g$. We say that a train track $\tau$ on $S_g$ is {\em dual} to $\Delta$ if it can be obtained from this complex by deleting one inner edge in each triangle of $\Delta$.
We remark that this operation is well-defined even though we are only considering isotopy classes: two isotopic triangulations will give isotopic collections of dual train tracks.

	\begin{figure}[h]
	\centering
	\begin{subfigure}[b]{0.4\textwidth}
		\centering
		\includegraphics[width=0.7\textwidth]{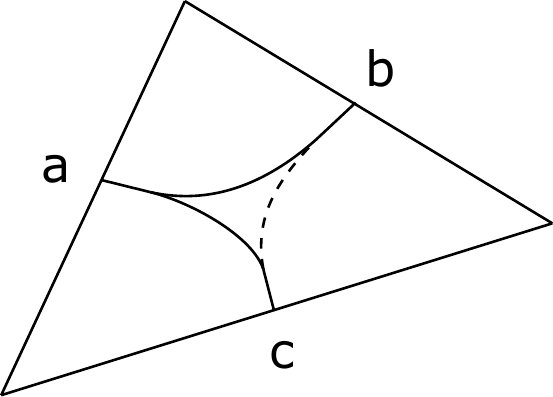}
		\caption{The dual $1$-complex and dual train track in a triangle.}
		\label{fig:complex_1}
	\end{subfigure}
	\quad \quad \quad
	~ %add desired spacing between images, e. g. ~, \quad, \qquad, \hfill etc. 
	%(or a blank line to force the subfigure onto a new line)
	\begin{subfigure}[b]{0.4\textwidth}
		\centering
		\includegraphics[width=0.7\textwidth]{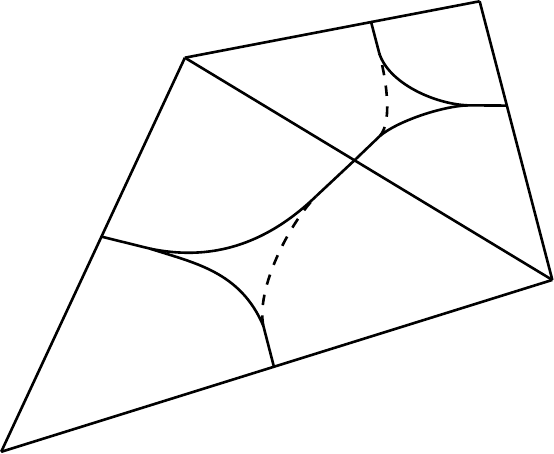}
		\caption{Joining the dual $1$-complexes and train tracks.}
		\label{fig:complex_2}
	\end{subfigure}
	\caption{The $1$-complex and the train track dual to a triangulation. The train track is obtained by removing the dashed edges from the 1-complex.} 
	\label{fig:complex}
	\end{figure}

Let $q \in \QoT$ and $\Delta'$ be a triangulation of $q$. Denote by $\Delta$ the triangulation of $S_g$ obtained by pulling back $\Delta'$ via the marking of $q$. The horizontal foliation $\Im(q) \in \MF_g$ is carried by a train track dual to $\Delta$. Indeed, let $T'$ be a triangle of $\Delta'$. Label the edges of $T'$ by $a,b,c$ so that
\[
\int_a \thinspace d\Im(q) = \int_b \thinspace d\Im(q) + \int_c \thinspace d\Im(q)
\]
This labelling is unique unless one of the edges of $T'$ is horizontal, in which case there exist two such labelings. On $T'$ consider a  $1$-complex as in Figure \ref{fig:complex_1} and delete the inner edge of this complex opposite to $a$. Consider the corresponding $1$-complexes on all the triangles of $\Delta'$.
Joining these complexes along the edges of $\Delta'$ as in Figure \ref{fig:complex_2} and pulling back the resulting complex to $S_g$ yields a train track $\tau$ dual to $\Delta$.
This train track carries $\Im(q) \in \MF_g$ and, for each edge $e$ of $\Delta'$, the counting measure of the coresponding edge of $\tau$ is equal to $\int_e \thinspace d\Im(q)$. If $q$ is in the principal stratum of quadratic differentials, the train track $\tau$ obtained through this construction is maximal, and in general, the singularity structure of $q$ is reflected in the combinatorics of $\tau$.

Directly from the discussion above and Proposition \ref{prop:delaunay_comp} we deduce the following, which is analogous to both \cite[Proposition 2.7]{Ara22} and \cite[Section 10]{CF}.

\begin{proposition}
	\label{lem:finite_tt}
	For every compact subset $\mathcal{K} \subseteq \mathcal{T}_g$ there exists a finite collection of train tracks $\{\tau_i\}_{i=1}^n$ on $S_g$ with the following property. Let $q \in \pi^{-1}(\mathcal{K})$ and $\gamma$ be a saddle connection of $q$ that attains the minimal flat length among saddle connections of $q$. Then, there exists $i \in \{1,\dots,n\}$ such that $\tau_i$ carries $\Im(q)$ and such that the corresponding counting measure on the edges of $\tau_i$ gives weight $\int_{\gamma} \thinspace d\Im(q)$ to one of the edges of $\tau_i$.
\end{proposition}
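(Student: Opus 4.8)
The plan is to combine the three structural inputs that have already been established: Proposition~\ref{prop:delaunay_comp}, which produces for a compact $\mathcal{K}\subseteq\T_g$ a constant $L=L(\mathcal{K})$ and a \emph{finite} list $\{\Delta_i\}_{i=1}^m$ of isotopy classes of triangulations of $S_g$ such that any $q\in\pi^{-1}(\mathcal{K})$ admits an $L$-bounded triangulation $\Delta'$ containing a shortest saddle connection and pulling back to some $\Delta_i$; the construction of a dual train track to a triangulation, which is well-defined on isotopy classes; and the computation (recorded in the paragraph preceding the statement) that the horizontal foliation $\Im(q)$ is carried by a train track dual to $\Delta$, with the carrying weight on the edge dual to a triangle edge $e$ equal to $\int_e d\Im(q)$.

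First I would take $\mathcal{K}$ and apply Proposition~\ref{prop:delaunay_comp} to extract $L=L(\mathcal{K})$ and the finite list $\{\Delta_i\}_{i=1}^m$. Next, for each fixed $\Delta_i$, I would enumerate the finitely many train tracks dual to $\Delta_i$: since a dual train track is obtained by deleting exactly one inner edge in each of the $4g-4$ triangles of $\Delta_i$, there are at most $3^{4g-4}$ such train tracks, and this is a finite set depending only on $\Delta_i$ (hence only on $\mathcal{K}$). Taking the union over $i\in\{1,\dots,m\}$ of all these dual train tracks yields the finite collection $\{\tau_i\}_{i=1}^n$ claimed in the statement.

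Then, given $q\in\pi^{-1}(\mathcal{K})$ and a shortest saddle connection $\gamma$ of $q$, Proposition~\ref{prop:delaunay_comp} furnishes an $L$-bounded triangulation $\Delta'$ of $q$ with $\gamma$ as one of its edges, pulling back to some $\Delta_i$. Running the dual-train-track construction from the paragraph before the statement on $\Delta'$ — labeling the edges of each triangle by the subadditivity relation for $\int d\Im(q)$ and deleting the inner edge opposite the ``long'' side — produces a train track $\tau$ carrying $\Im(q)$, and $\tau$ is (isotopic to) one of the dual train tracks of $\Delta_i$, hence one of our $\tau_i$. By the weight formula, the counting measure of the edge of $\tau$ dual to $\gamma$ is exactly $\int_\gamma d\Im(q)$, which is what we need. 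I would note the one subtlety: when one edge of a triangle is horizontal the labeling is not unique, so the dual train track is only determined up to a finite choice — but since we already pass to a finite list this causes no problem; any valid choice lands in $\{\tau_i\}$.

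The main obstacle is essentially already absorbed into the cited propositions: the genuinely nontrivial content is the finiteness in Proposition~\ref{prop:delaunay_comp} (which rests on Lemmas~\ref{lem:triang_compact} and~\ref{lem:delaunay} and the proper discontinuity of the $\mcg$-action, and crucially on \emph{not} marking the zeros), together with the fact that $\Im(q)$ is carried by a dual train track with the stated weights. So the remaining work is bookkeeping: verifying that ``dual train track of a triangulation'' is a finite-valued, isotopy-invariant construction, and checking that the edge dual to $\gamma$ in the particular $\Delta'$ really does receive weight $\int_\gamma d\Im(q)$ — which is immediate from the construction recalled above. The only place requiring a moment's care is the horizontal-edge ambiguity, handled as noted.
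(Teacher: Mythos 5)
Your proof is correct and matches the paper's intended argument, which is stated only as ``directly from the discussion above and Proposition \ref{prop:delaunay_comp}''; you have simply made explicit the bookkeeping (finiteness of dual train tracks per triangulation, the weight formula, and the horizontal-edge ambiguity) that the paper leaves implicit.
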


\subsection*{The key estimate.} We are now ready to prove the main estimate of this section. This estimate will allow us control the equidistribution problem of interest near the multiple zero locus in a sufficiently strong way.

Let $\JS \colon \T_g \times \MF_g \to \QoT$ denote the Hubbard--Masur map from Theorem \ref{theo:HM1} which to every $(X,\lambda) \in \T_g \times \MF_g$ assigns the unique marked quadratic differential $q \in \pi^{-1}(X)$ such that $\Im(q) = \lambda$.
This differential has area $\Ext_X(\lambda)$, so rescaling it by the square root of extremal length results in a unit area differential.
Let $K_\sigma \subseteq \QoT$ denote the set of marked unit area quadratic differentials $q \in \QoT$ belonging to the principal stratum with $\ell_{\min}(q) \geq \sigma$.
Given $X \in \mathcal{T}_g$ consider the set
\[
F(X,\sigma) := \left\{ \alpha \in \mathcal{MF}_g(\mathbb{Z}) \ \middle| \ 
\frac{1}{\sqrt{\Ext_X(\alpha)}} \JS(X,\alpha) \notin K_\sigma \right\}
\]
and for any compact set $\mathcal K \subset \T_g$ define $F(\mathcal K, \sigma) := \bigcup_{X \in \mathcal K} F(X, \sigma)$.

\begin{proposition}
    \label{prop:key_estimate}
    Let $\mathcal{K} \subseteq \mathcal{T}_g$ be a compact subset. Then there exists a constant $C > 0$ such that for every $X \in \mathcal{K}$, every $\sigma > 0$, and every $L > 1$,
    \[
    \#\{\alpha \in F( \mathcal K,\sigma) \ | \ \mathrm{Ext}_\alpha(X) \leq L^2 \} \leq C \cdot L^{6g-7} + C \cdot \sigma \cdot L^{6g-6}.
    \]
\end{proposition}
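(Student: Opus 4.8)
The strategy is to bound the count of ``bad'' multi-curves by covering the bad set $F(\mathcal K,\sigma)$ with finitely many regions in train track coordinates, each adapted to one of the finitely many dual train tracks produced by Proposition \ref{lem:finite_tt}, and then estimate the number of integer points in each region using the homogeneity of these coordinates. The key observation is that if $\alpha \in F(\mathcal K, \sigma)$ for some $X \in \mathcal K$, then the rescaled differential $\tfrac{1}{\sqrt{\Ext_X(\alpha)}}\JS(X,\alpha)$ either lies in a non-principal stratum or has a saddle connection of flat length $< \sigma$ (equivalently, unscaled, of flat length $< \sigma\sqrt{\Ext_X(\alpha)} \le \sigma L$). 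In either case there is a \emph{short} saddle connection $\gamma$ on $q = \JS(X,\alpha)$ witnessing the badness: in the non-principal case take $\gamma$ to be a shortest saddle connection (its existence forces a non-generic combinatorial type), and in the principal-but-thin case take the saddle connection realizing $\ell_{\min}(q) < \sigma L$.

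First I would invoke Proposition \ref{lem:finite_tt} with the compact set $\mathcal K$ to obtain a finite list $\{\tau_i\}_{i=1}^n$ of train tracks such that, whenever $q \in \pi^{-1}(\mathcal K)$ and $\gamma$ is a minimal-length saddle connection of $q$, some $\tau_i$ carries $\Im(q)$ with an edge $e_i$ carrying weight $\int_\gamma d\Im(q)$. Applying this to $q = \JS(X,\alpha)$ (so $\Im(q) = \alpha$ and $\|q\| = \Ext_X(\alpha) \le L^2$), I get that $\alpha$ lies in one of the cones $U(\tau_i)$, and the corresponding integer point $v \in V(\tau_i) \subset \ZZ_{\ge 0}^{18g-18}$ has total weight controlled by $L$ (since all edge weights are bounded by the flat length of the longest triangulation edge, hence by $\preceq_g L$ using Lemma \ref{lem:delaunay}.1 and the $L(\mathcal K)$-boundedness of Delaunay triangulations). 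So it suffices to bound, for each fixed $i$, the number of integer points $v \in V(\tau_i)$ of total weight $\preceq_g L$ which additionally satisfy the defining ``badness.''

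The count then splits into two pieces according to which alternative holds. For the non-principal-stratum piece: the locus of $q$ lying in a non-principal stratum forces, in dual-train-track language, some prescribed vanishing/degeneracy among the counting measures (collapsed edges, non-maximality of $\tau$), which cuts $V(\tau_i)$ down to a union of subcones of dimension $\le 6g-7$; the number of integer points of norm $\preceq_g L$ in a rational cone of dimension $d$ is $O(L^d)$, giving the $C L^{6g-7}$ term. For the thin-principal piece: here $\Im(q)$ assigns weight $\int_\gamma d\Im(q) \le \ell_\gamma(q) < \sigma L$ to the distinguished edge $e_i$, so I am counting integer points $v \in V(\tau_i)$ with total weight $\preceq_g L$ \emph{and} $v(e_i) < \sigma L$; slicing by the value of $v(e_i)$ and using that each slice is a $(6g-7)$-dimensional lattice polytope of norm $\preceq_g L$, this contributes $O(\sigma L \cdot L^{6g-7}) = O(\sigma L^{6g-6})$. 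Summing over the finitely many $\tau_i$ and absorbing all implied constants (which depend only on $g$ and $\mathcal K$) into a single $C$ yields the claimed bound $C L^{6g-7} + C\sigma L^{6g-6}$.

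\textbf{Main obstacle.} The delicate point is the lattice-point counting in the cones $V(\tau_i)$: I need that intersecting with a non-principal stratum (or fixing $v(e_i)$) genuinely drops the dimension and that the resulting regions are cut out by rational linear equations, so that the crude bound ``$\#\{v \in \ZZ^d \cap (\text{rational cone of dim } d), |v| \le L\} = O_d(L^d)$'' applies uniformly. Verifying that the stratum condition translates cleanly into such linear constraints on train track weights—i.e., that the combinatorics of $\tau_i$ faithfully records the singularity data of $q$, as asserted in the paragraph preceding Proposition \ref{lem:finite_tt}—is where the real work lies; the rest is bookkeeping with the homogeneity of the piecewise-integral-linear structure on $\MF_g$. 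A secondary subtlety is making sure the bound $v(e) \preceq_g L$ on all edge weights is uniform over $X \in \mathcal K$ and independent of $\sigma$, which is exactly what the compactness input from Proposition \ref{prop:delaunay_comp} provides.
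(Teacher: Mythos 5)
Your proposal is correct and follows essentially the same route as the paper's own proof: invoke Proposition~\ref{lem:finite_tt} to obtain a finite list of dual train tracks, split into the non-principal case (where the carrying track is non-maximal, so the cone of counting measures has dimension $\le 6g-7$) and the thin principal case (where the distinguished edge weight is $\le \sigma L$), and finish with lattice-point counting. The paper's proof is terse—it states ``the desired bound follows from a standard lattice point counting argument''—and your elaboration of that step (the $O(L^{6g-7})$ count for lower-dimensional rational cones, the slicing argument for the thin case) correctly fills in what the authors left implicit.
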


\begin{proof}
Let $\{\tau_i\}_{i=1}^n$ be the finite collection of train tracks on $S_g$ provided by Proposition \ref{lem:finite_tt}.
    Now consider $\alpha \in \mathcal{MF}_g(\mathbb{Z})$ such that $\mathrm{Ext}_\alpha(X) \leq L^2$ and
    \[q := \mathrm{JS}(X,\alpha/ \sqrt{\mathrm{Ext}_\alpha(X)} ) \in \QoT \setminus K_\sigma.\]
    Suppose first that $q$ is not in the principal stratum. Then, by construction, the train track $\tau_i$ carrying $\alpha = \sqrt{\mathrm{Ext}_\alpha(X)} \cdot \Im(q)$ is not maximal. Suppose now that $q$ is in the principal stratum and that $\gamma$ is a saddle connection of $q$ attaining $\ell_{\min}(q)$. It follows by construction that, when carried by the corresponding $\tau_i$, one of the resulting weights of $\alpha$ is at most $\sigma \cdot \sqrt{\mathrm{Ext}_\alpha(X)} \leq \sigma \cdot L$. As the collection of train tracks $\{\tau_i\}_{i=1}^n$ is finite, the desired bound follows from a standard lattice point counting argument.
\end{proof}

\section{Uniform geometric estimates}
\label{sec:weightsvaryA}

\subsection*{Outline of this section.} In this section we show that the weights of the metric ribbon graph 
$\Xi^1(X, \vec{\alpha})$ vary uniformly over $\alpha$ as $X$ varies in a suitably chosen neighborhood of moduli space; see Proposition \ref{prop:varA} for a precise statement. The proof is based on a uniform continuity argument and uses the AGY metric on strata of quadratic differentials introduced by Avila, Gouëzel, and Yoccoz \cite{AGY06} in a crucial way. Before proving Proposition \ref{prop:varA} we discuss some of the basic properties of the AGY metric following \cite{AGY06} and \cite{AG13}.

\subsection*{The AGY metric.} Let $\mathcal{Q} \subseteq \mathcal{QT}_g$ be the principal stratum of marked quadratic differentials on $S_g$, that is, the subset of differentials with only simple zeros. Denote points in $\mathcal{Q}$ by $(X,q)$, where $X$ is a marked Riemann surface and $q$ is a quadratic differential on $X$. Let $(Z,\omega) \to (X,q)$ be the holonomy double cover of $(X,q)$. The tangent space of $\mathcal{Q}$ at $(X,q)$ can be identified with the relative cohomology group $H^1_\mathrm{odd}(Z,\Sigma;\mathbf{C})$ where $\Sigma \subseteq Z$ denotes the set of zeroes of $\omega$, and the subscript odd denotes the $-1$ eigenspace of the canonical involution of $Z \to X$. For $v \in H^1_\mathrm{odd}(Z,\Sigma;\mathbf{C})$ consider the norm
\[
\|v\|_{q} := \sup_{s \in \mathcal{S}} \left| \frac{v(s)}{\mathrm{hol}_\omega(s)} \right|,
\]
where $\mathcal{S}$ denotes the set of saddle connections of $\omega$ and $\mathrm{hol}_\omega(s) \in \mathbf{C}$ denotes the holonomy of the saddle connection $s$ with respect to $\omega$; we will sometimes denote this holonomy by $|s|_\omega$. By work of Avila, Gouëzel, and Yoccoz \cite{AGY06}, this definition indeed gives rise to a norm on $H^1_\mathrm{odd}(Z,\Sigma;\mathbf{C})$ and the corresponding Finsler metric on $\mathcal{Q}$ is complete.
We refer to this metric as the AGY metric of $\mathcal{Q}$ and denote it by $d_{\AGY}$.
For any $\mathcal{C}^1$ path $\kappa \colon [0,1] \to \mathcal{Q}$, denote its AGY length by
\[
\mathrm{length}(\kappa) := \int_0^1 \|\kappa'(t)\|_{\kappa(t)} \thinspace dt.
\]

We observe that the rescaling action $q \mapsto rq$ for $r \in \RR_{>0}$ is an isometry of the AGY metric.
Indeed, given $v \in H^1_\mathrm{odd}(Z,\Sigma;\mathbf{C})$ we have that
$\|rv\|_{rq} = \|v\|_q$, and so given any $C^1$ path $\kappa(t)$ from $q$ to $q'$ we can compute the length of the corresponding path $r\kappa(t)$ from $rq$ to $rq'$:
\[
\mathrm{length}(r\kappa) 
= \int_0^1 \|r\kappa'(t)\|_{r\kappa(t)} \thinspace dt
= \int_0^1 \|\kappa'(t)\|_{\kappa(t)} \thinspace dt
= \mathrm{length}(\kappa).
\]

\subsection*{The fundamental property.}
Fix $(X,q) \in \mathcal{Q}$ and let $\Psi = \Psi_q$ be the local parametrization of $\mathcal{Q}$ by its tangent plane $H^1_\mathrm{odd}(Z,\Sigma;\mathbf{C})$ as above. More formally, define $\Psi(v)$ for $v \in H^1_\mathrm{odd}(Z,\Sigma;\mathbf{C})$ as follows. Consider the path $\kappa$ starting from $(X,q)$ with $\kappa'(t) = v$ for all times $t$. For small $t$ the path $\kappa(t)$ is well defined. It is possible that $\kappa(t)$ could be not defined for large $t$. If the path $\kappa$ is well defined for all $t \in [0,1]$ define $\Psi(v) := \kappa(1) \in \mathcal{Q}$.

Denote by $B(0,r)$ the ball of radius $r$ centered at the origin of $H^1_\mathrm{odd}(Z,\Sigma;\mathbf{C})$ with respect to the norm $\|\cdot\|_q$. The following results, which will be crucial in later sections, give control on the image of the ``exponential map'' $\Psi$.

\begin{proposition}
\label{prop:fund}
The map $\Psi$ is well defined on $B(0,1/2)$. Moreover,
\begin{enumerate}
    \item\label{item:expoutbd} For $v \in B(0,1/2)$,
\[
d_\mathrm{AGY}(q,\Psi(v)) \leq 2 \cdot \|v\|_q.
\]
\item\label{item:expnormbd} For every $v \in B(0,1/2)$ and every $w \in H^1_\mathrm{odd}(Z,\Sigma;\mathbf{C})$,
\[
1/2 \leq \frac{\|w\|_q}{\|w\|_{\Psi(v)}} \leq 2.
\]
\item\label{item:expinbd} For $v \in B(0,1/25)$,
\[
d_\mathrm{AGY}(q,\Psi(v)) \geq \|v\|_q/2.
\]
In particular, any $q' \in \cQ$ with $d_{\AGY}(q, q') < 1/50$ is equal to $\Psi(v)$ for some $v \in B(0, 1/25)$.
\end{enumerate}
\end{proposition}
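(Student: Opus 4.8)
The plan is to work entirely in the period-coordinate chart $\Psi = \Psi_q$ of $\cQ$ near $q$ modeled on $H^1_\mathrm{odd}(Z,\Sigma;\mathbf{C})$, and to exploit the one elementary fact that holonomies of persisting saddle connections vary affinely along straight rays in these coordinates. Concretely: for $v \in H^1_\mathrm{odd}(Z,\Sigma;\mathbf{C})$ let $\kappa_v$ be the path through $q$ with constant velocity $v$ in period coordinates; since period-coordinate transition maps are linear, $\kappa_v$ solves a well-posed ODE with maximal interval $[0,T_v)$, and $\Psi(v) = \kappa_v(1)$ once $T_v > 1$. As long as $\kappa_v(t) \in \cQ$, every saddle connection $s$ of $q$ persists with $\mathrm{hol}_{\kappa_v(t)}(s) = \mathrm{hol}_q(s) + t\,v(s)$ and $|v(s)| \le \|v\|_q\,|\mathrm{hol}_q(s)|$, so for $t \le 1$ and $\|v\|_q \le 1/2$ its holonomy is pinched between $(1-t\|v\|_q)$ and $(1+t\|v\|_q)$ times $|\mathrm{hol}_q(s)|$, hence between $\tfrac12$ and $\tfrac32$ of it. I would prove well-definedness on $B(0,1/2)$ and the norm comparison \eqref{item:expnormbd} together, bootstrapping along $\kappa_v$.

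The step I expect to be the main obstacle is precisely the norm comparison. The pinching above already controls the contribution to $\|w\|_{\kappa_v(t)}$ of the saddle connections coming from $q$; the delicate point is to control the \emph{new} saddle connections $s'$ of $\kappa_v(t)$, whose class is realized at $q$ only by a geodesic concatenation of several saddle connections, so that the triangle inequality alone does not give $|\mathrm{hol}_{\kappa_v(t)}(s')| \asymp |\mathrm{hol}_q(s')|$. This is the heart of the basic estimates on the AGY norm in \cite{AGY06} (see also \cite{AG13}), and I would import it by comparing $\|\cdot\|_q$, up to the universal constants in the statement, with the analogous quantity built from the edges of a fixed Delaunay triangulation of $q$ (cf.\ \cite{MS91} and Lemma \ref{lem:delaunay}): those edges persist and have affinely-varying, pinched holonomies, and they suffice to evaluate and bound all other periods. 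Granting this yields $\tfrac12 \le \|w\|_q/\|w\|_{\kappa_v(t)} \le 2$ for all $w$ and all $t < \min(T_v,1)$; in particular $\|v\|_{\kappa_v(t)} \le 2\|v\|_q$, so $\mathrm{length}(\kappa_v|_{[0,t]}) \le 2\|v\|_q$. Since $(\cQ, d_{\AGY})$ is complete \cite{AGY06}, a path of bounded length is Cauchy and converges in $\cQ$, so $\kappa_v$ would extend past any $T_v \le 1$, contradicting maximality; hence $T_v > 1$, $\Psi$ is defined on $B(0,1/2)$, and evaluating at $t=1$ gives \eqref{item:expnormbd}.

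Claim \eqref{item:expoutbd} would then be immediate, since $\kappa_v$ joins $q$ to $\Psi(v)$: $d_{\AGY}(q,\Psi(v)) \le \mathrm{length}(\kappa_v) = \int_0^1 \|v\|_{\kappa_v(t)}\,dt \le 2\|v\|_q$. For the lower bound \eqref{item:expinbd}, given $v \in B(0,1/25)$ I would take an arbitrary $C^1$ path $\eta$ from $q$ to $\Psi(v)$ and split into two cases. If $\eta$ stays inside $\Psi(B(0,1/2))$, I write $\eta(s) = \Psi(u(s))$ with $u(0)=0$, $u(1)=v$; then $\eta'(s) = u'(s)$ in coordinates and \eqref{item:expnormbd} at $\Psi(u(s))$ gives $\|\eta'(s)\|_{\eta(s)} \ge \tfrac12\|u'(s)\|_q$, so
\[
\mathrm{length}(\eta) \ \ge\ \tfrac12\int_0^1 \|u'(s)\|_q\,ds \ \ge\ \tfrac12\Bigl\|\int_0^1 u'(s)\,ds\Bigr\|_q \ =\ \tfrac12\|v\|_q .
\]
Otherwise $u$ first meets $\|u(s_0)\|_q = 1/25$ while still inside the chart, and the same estimate applied to $\eta|_{[0,s_0]}$ gives $\mathrm{length}(\eta) \ge 1/50 > \tfrac12\|v\|_q$. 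Taking the infimum over $\eta$ gives $d_{\AGY}(q,\Psi(v)) \ge \|v\|_q/2$, and rerunning the dichotomy shows a path of length $<1/50$ from $q$ can never reach $\|u\|_q = 1/25$, so it stays in $\Psi(B(0,1/25))$ and its endpoint is $\Psi(v)$ with $\|v\|_q < 1/25$ --- this is the ``in particular''. The one technical point to nail down along the way is that $\Psi$ restricts to a homeomorphism onto its image on a ball of the stated size (so $\eta$ can be lifted through $\Psi$), which follows from the construction of period coordinates and the estimates above; I would fold it into the bootstrap of the second paragraph, shrinking constants if necessary.
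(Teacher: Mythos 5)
Your proposal is essentially sound and, for the part the paper actually argues, takes a genuinely different route. The paper proves items (1), (2) and the first inequality in (3) by simply citing \cite[Proposition~5.3]{AG13} (observing that the proof there, whose engine is \cite[Proposition~5.5]{AG13} on parallel transport along paths, does not use that $v$ lies in the unstable distribution), and then establishes the ``in particular'' clause of (3) via an openness/maximality argument: $\Psi(B(0,1/25))$ is open by invariance of domain, hence contains an AGY ball of some maximal radius $r$, and if $r<1/50$ the first inequality of (3) traps $\Psi^{-1}(B_{\AGY})$ inside $B(0,2r)$, yielding a slightly bigger AGY ball inside $\Psi(B(0,2r+\varepsilon))\subseteq\Psi(B(0,1/25))$ and contradicting maximality. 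You instead (a) unpack the bootstrap behind items (1)--(2) explicitly, and (b) prove the ``in particular'' by lifting a short path through $\Psi$ and showing it can never reach the level set $\|u\|_q=1/25$. Your path-lifting argument for the last clause is clean and arguably more elementary than the paper's degree-theoretic/openness argument, and your derivation of the lower bound via $\mathrm{length}(\eta)\ge\tfrac12\int\|u'\|_q\,ds\ge\tfrac12\|v\|_q$ is exactly the kind of computation underlying \cite{AG13}.

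Two caveats worth flagging. First, you correctly single out the real technical content --- controlling the AGY norm contribution of saddle connections that are new at $\kappa_v(t)$ --- but the Delaunay-triangulation route you sketch would a priori yield comparison constants depending on the combinatorics of the triangulation, not the sharp universal $2$ and $1/2$ that appear in the statement (and in \cite[Proposition~5.3]{AG13}); you acknowledge this (``shrinking constants if necessary''), and for the downstream applications in this paper the precise constants are immaterial, but strictly speaking your argument proves a version of the Proposition with different numerical thresholds. Second, both your lower-bound argument and your path-lifting argument presuppose that $\Psi$ restricted to the relevant ball is a homeomorphism onto its image, so that the lift $u$ with $\eta=\Psi\circ u$ exists and is unique. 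This is true --- it is the assertion that period coordinates are honest local coordinates on the stratum --- and you flag it, but note that the paper's own invariance-of-domain step requires essentially the same input (continuity plus injectivity of $\Psi$ on the ball), so neither proof escapes this point; you should make sure to establish it \emph{before} running the dichotomy, as it is used to define $u$ and to detect the exit time $s_0$.
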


Apart from the last claim of (3), this is just the statement of \cite[Proposition 5.3]{AG13} generalized to arbitrary vectors; we provide an outline of the final claim.

\begin{proof}[Proof sketch]
Consider the image of $B(0, 1/25)$ under $\Psi$.
By invariance of domain, this is open; in particular, it contains an open AGY ball $B_{\AGY}$ of some maximal radius $r>0$.
%(pf: compact closure, take min distance of boundary to center).
Now by the first claim of (3), we have that $\Psi^{-1}(B_{\AGY})$ is contained in $B(0, 2r)$.
In particular, if $r < 1/50$ then we can find a slightly larger ball $B(0, 2r + \varepsilon)$ whose $\Psi$ image strictly contains $B_{\AGY}$ (by the first claim of (3) again), hence $r$ was not maximal to begin with. So $r \ge 1/50$.
\end{proof}

A proof of the remaining statements can be obtained by closely following the arguments of \cite{AG13}.
Indeed, the key input in the proof of Proposition \ref{prop:fund} is \cite[Proposition 5.5]{AG13}, which gives a bound on the growth of the parallel translate of a vector in terms of the length of the path along which it is translated, and is true for any $v$, not just those lying in the unstable distribution.

We record for future use a similar result on how the length of saddle connections changes along paths; compare with \cite[Lemma 5.6]{AG13}.

\begin{lemma}\label{lem:saddlechange}
Suppose that $\kappa:[0,1] \to \mathcal{Q}$ is a $C^1$ path and that $s$ is a saddle connection that survives in $\kappa(t)$ for all $t \in [0,1]$. Then 
\[
e^{-\mathrm{length}(\kappa)}
\le
\frac{|s|_{\kappa(0)}}{|s|_{\kappa(1)}} 
\le
e^{\mathrm{length}(\kappa)}.
\]
\end{lemma}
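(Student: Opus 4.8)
The plan is to reduce the estimate for the holonomy $|s|_{\kappa(t)}$ to the growth bound for parallel translates of cohomology classes that underlies Proposition \ref{prop:fund}. The key observation is that the holonomy of a fixed saddle connection $s$ along the path is nothing but the value of the tautological class (the period coordinate) evaluated on the relative homology class of $s$ (lifted to the holonomy double cover). More precisely, if $\kappa(t) = (X_t, q_t)$ and we let $\omega_t$ be the associated abelian differential on the double cover, then $t \mapsto \mathrm{hol}_{\omega_t}(s)$ is obtained by parallel-transporting the period cocycle and pairing against the homology class $[s]$, which remains well-defined precisely because $s$ survives in $\kappa(t)$ for all $t \in [0,1]$.

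The concrete steps are as follows. First I would set $f(t) := |s|_{\kappa(t)} = |\mathrm{hol}_{\omega_t}(s)|$ and note that, because $s$ survives throughout, $f$ is a positive $C^1$ function on $[0,1]$ (it is nonzero since a saddle connection always has nonzero holonomy). Next, I would bound the logarithmic derivative $|f'(t)/f(t)|$ by the AGY speed $\|\kappa'(t)\|_{\kappa(t)}$. This is where the definition of the AGY norm does the work: $\|v\|_q = \sup_{s' \in \mathcal S} |v(s')/\mathrm{hol}_\omega(s')|$, so in particular the vector $\kappa'(t) \in H^1_{\mathrm{odd}}(Z_t, \Sigma_t; \CC)$ evaluated on the class of $s$ satisfies $|\kappa'(t)(s)| \le \|\kappa'(t)\|_{\kappa(t)} \cdot |\mathrm{hol}_{\omega_t}(s)|$. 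Since $\frac{d}{dt}\mathrm{hol}_{\omega_t}(s) = \kappa'(t)(s)$ (this is the defining property of period coordinates: the derivative of the holonomy of a surviving cycle is the evaluation of the tangent vector on that cycle), we get
\[
\left| \frac{d}{dt} \log |\mathrm{hol}_{\omega_t}(s)| \right|
= \left| \Re\!\left( \frac{\overline{\mathrm{hol}_{\omega_t}(s)}}{|\mathrm{hol}_{\omega_t}(s)|^2} \cdot \kappa'(t)(s) \right) \right|
\le \frac{|\kappa'(t)(s)|}{|\mathrm{hol}_{\omega_t}(s)|}
\le \|\kappa'(t)\|_{\kappa(t)}.
\]
Finally, integrating this inequality over $[0,1]$ gives $|\log f(1) - \log f(0)| \le \int_0^1 \|\kappa'(t)\|_{\kappa(t)}\, dt = \mathrm{length}(\kappa)$, which exponentiates to the claimed two-sided bound $e^{-\mathrm{length}(\kappa)} \le |s|_{\kappa(0)}/|s|_{\kappa(1)} \le e^{\mathrm{length}(\kappa)}$.

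The main subtlety — and the point I would want to state carefully rather than just wave at — is the identification of $\frac{d}{dt}\mathrm{hol}_{\omega_t}(s)$ with $\kappa'(t)(s)$, i.e.\ that the holonomy of a surviving saddle connection is honestly a (locally constant-in-homology-class) linear functional on the tangent space compatible with parallel transport along $\kappa$. This requires knowing that the homology class $[s] \in H_1(Z_t, \Sigma_t; \ZZ)^{\mathrm{odd}}$ is canonically constant along the path, which is exactly what ``$s$ survives in $\kappa(t)$ for all $t$'' guarantees: there is no collision of zeros and no loss of the saddle connection, so the relative cycle persists and period coordinates vary smoothly. Granting that, everything else is the elementary estimate above; this is also why the statement parallels \cite[Lemma 5.6]{AG13}, and one could alternatively deduce it directly from Proposition \ref{prop:fund}\eqref{item:expnormbd} by a covering argument along $\kappa$, but the direct computation is cleaner.
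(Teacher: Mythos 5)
Your proof is correct and is essentially the argument that the paper defers to in \cite[Lemma 5.6]{AG13}: bound the logarithmic derivative of $|\mathrm{hol}_{\omega_t}(s)|$ pointwise by $\|\kappa'(t)\|_{\kappa(t)}$ (which follows immediately from the definition of the AGY norm once one knows that $\tfrac{d}{dt}\mathrm{hol}_{\omega_t}(s)=\kappa'(t)(s)$ for a surviving saddle connection), then integrate. The identification $\tfrac{d}{dt}\mathrm{hol}_{\omega_t}(s)=\kappa'(t)(s)$, which you correctly flag as the only genuine subtlety, is exactly the statement that period coordinates provide a local chart compatible with the Gauss--Manin connection, and the survival hypothesis is precisely what keeps the relative cycle $[s]$ in play along the path.
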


We remark that only the upper bound is stated in \cite{AG13}, but the proof there also yields the desired lower bound.

\subsection*{Variation of weights.}
With these preliminaries taken care of, we can now show that the geometry of the horizontal ribbon graph is controlled uniformly over most simple closed multi-curves $\alpha$ as $X$ varies in a small ball in $\T_g$.
Compare with Lemma 4.1 and Proposition 4.2 of \cite{spine}.

Recall that if $\vec{\alpha}$ is an ordered, oriented simple closed multi-curve on $S_g$, then $\alpha \in \mathcal{MF}_g(\mathbb{Z})$ denotes its equivalence class as a singular measured foliation. Recall that for a compact set $\mathcal K \subseteq \T_g$ and $\sigma > 0$, the set $F(\mathcal K, \sigma)$ denotes those integral simple closed multi-curves $\alpha \in \mathcal{MF}_g(\mathbb{Z})$ so that, for some $X \in \mathcal K$, the differential $\JS(X, \alpha)$ is either not in $\cQ$ or has a saddle connection shorter than $\sigma \sqrt{\Ext}_X(\alpha)$. 

\begin{proposition}
\label{prop:varA}
Fix a compact subset $\mathcal K \subseteq \T_g$. Then, for every $\varepsilon>0$ and every $\sigma >0$, there exists $\delta = \delta(\mathcal K, \varepsilon, \sigma) > 0$ so that for any ordered, oriented simple closed multi-curve $\vec{\alpha}$ on $S_g$ such that $\alpha \notin F(\mathcal K, \sigma)$ and any two $X, X' \in \mathcal K$ with $d_{\Teich}(X, X') \le \delta$, the following hold.
\begin{enumerate}
    \item The horizontal ribbon graphs $\Sp(X, \vec{\alpha})$ and $\Sp(X', \vec{\alpha})$ have the same topological type, i.e., live in the same facet of $\MRG(S_g \setminus \vec{\alpha})$.
    \item For every edge $e$ of $\Sp(X, \vec{\alpha})$ and/or $\Sp(X', \vec{\alpha})$, we have that
\[e^{-\varepsilon} \le \frac{|e|_{\Sp^1(X, \vec{\alpha})}}{|e|_{\Sp^1(X', \vec{\alpha})}} \le e^{\varepsilon}.\]
%    \[e^{-\varepsilon} \le \frac{\ell_e(\JS^1(X, \alpha))}{\ell_e(\JS^1(X', \alpha))} \le e^{\varepsilon}.\]
\end{enumerate}
\end{proposition}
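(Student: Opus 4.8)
The plan is to realize both critical graphs $\Sp(X,\vec\alpha)$ and $\Sp(X',\vec\alpha)$ as unions of horizontal saddle connections of the Jenkins--Strebel differentials $q = \JS(X,\alpha)$ and $q' = \JS(X',\alpha)$, and to control these differentials via the AGY metric on $\cQ$. The key point is that $\alpha \notin F(\mathcal K,\sigma)$ guarantees that the rescaled differentials $\hat q := q/\sqrt{\Ext_X(\alpha)}$ and $\hat q' := q'/\sqrt{\Ext_{X'}(\alpha)}$ both lie in $K_\sigma$, i.e., in the principal stratum with all saddle connections of length $\geq \sigma$. Since rescaling is an AGY isometry (as noted after the definition of the metric), and since the Teichmüller geodesic flow acts on $\QoT$ in a way that moves the projection to $\T_g$ by a controlled amount, the first step is to bound $d_{\AGY}(\hat q, \hat q')$ in terms of $d_{\Teich}(X,X')$ and the geometry of $K_\sigma$.

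First I would set up the comparison as follows. By Theorem \ref{thm:Kerckhoff}, if $d_{\Teich}(X,X') \le \delta$ then $\Ext_{X'}(\alpha)/\Ext_X(\alpha) \in [e^{-2\delta}, e^{2\delta}]$ for every measured foliation, so the two rescalings differ by a factor bounded in terms of $\delta$; combined with the Hubbard--Masur homeomorphism $\JS \colon \T_g \times \MF_g \to \QT$ (Theorem \ref{theo:HM1}), moving $X$ to $X'$ while keeping the horizontal foliation fixed at $\alpha$ traces out a path in $\QT$. Rescaling to unit area, I get a path in $\QoT$ from a point near $\hat q$ to a point near $\hat q'$. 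The essential input I would invoke is a quantitative version of the statement that the map $X \mapsto \JS(X,\alpha)/\sqrt{\Ext_X(\alpha)}$ is continuous, made uniform over the compact piece $K_\sigma$: there is a function $\rho(\delta) \to 0$ as $\delta \to 0$, depending only on $\mathcal K$ and $\sigma$ (i.e., on the compact set $K_\sigma$ and on $\mathcal K$), so that $d_{\AGY}(\hat q, \hat q') \le \rho(\delta)$. This is where compactness of $K_\sigma$ (needed for uniformity, since the AGY metric degenerates as saddle connections shrink) and properness of the projection $\QoT \to \T_g$ along the relevant directions enter. I would then choose $\delta$ small enough that $\rho(\delta) < 1/50$.

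Once $d_{\AGY}(\hat q, \hat q') < 1/50$, Proposition \ref{prop:fund}(3) gives $\hat q' = \Psi_{\hat q}(v)$ for some $v \in B(0,1/25)$, and by part (1), the whole AGY-geodesic (or the path $\kappa(t) = \Psi_{\hat q}(tv)$) from $\hat q$ to $\hat q'$ has length at most $2\|v\|_{\hat q} \le 2 \cdot \rho(\delta) \cdot (\text{const})$, so in particular every saddle connection present at $\hat q$ — in particular each horizontal one, whose length is $\geq \sigma$ — survives along the entire path: the path is too short for any saddle connection to be created or destroyed, since that would require a saddle connection to shrink to zero, contradicting Lemma \ref{lem:saddlechange} applied to the shortest saddle connection once the path length is smaller than $\log(\sigma/\text{diam})^{-1}$ type bounds — here I would use that on the compact set $K_\sigma$ the diameter is bounded above, so there is a uniform lower bound on how much AGY-length is needed to collapse a saddle connection. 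This proves statement (1): the combinatorics of the horizontal foliation, and hence the topological type of the critical graph, is constant along the path. Then Lemma \ref{lem:saddlechange} applied directly to each horizontal saddle connection $e$ gives $e^{-\mathrm{length}(\kappa)} \le |e|_{\hat q}/|e|_{\hat q'} \le e^{\mathrm{length}(\kappa)}$; since $|e|_{\Sp^1(X,\vec\alpha)} = |e|_{\hat q}$ (the unit-length critical graph is the critical graph of the rescaled differential, up to the overall normalization by total boundary length, which is the same for both since it is determined by the fixed combinatorics) and likewise for $X'$, choosing $\delta$ so that $\mathrm{length}(\kappa) \le \varepsilon$ yields statement (2).

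The main obstacle I anticipate is making the bound $d_{\AGY}(\hat q,\hat q') \le \rho(\delta)$ genuinely \emph{uniform} over all $\alpha \notin F(\mathcal K,\sigma)$ — that is, independent of which (possibly very long) multi-curve $\alpha$ we chose. The naive continuity of $\JS$ is pointwise in $\alpha$, and the AGY metric is not complete on all of $\QoT$ (only on the principal stratum $\cQ$), so one must argue that the relevant portion of the path stays inside a \emph{fixed} AGY-bounded region. The resolution is precisely that the condition $\alpha \notin F(\mathcal K,\sigma)$ forces $\hat q$ and $\hat q'$ into the compact set $K_\sigma \subseteq \cQ$, and the path between them — being AGY-short — cannot leave a slightly enlarged compact neighborhood of $K_\sigma$ on which the AGY metric is uniformly comparable to a fixed background metric; the uniform modulus of continuity $\rho$ then comes from compactness. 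Verifying that the path from $\hat q$ to $\hat q'$ genuinely stays in such a neighborhood (rather than wandering through short-saddle-connection regions) is the technical heart, and I would handle it by a continuity/connectedness argument: the set of $t$ for which $\kappa(t)$ lies in the enlarged compact set is open and closed and nonempty, using Proposition \ref{prop:fund}(2) to propagate norm comparisons and Lemma \ref{lem:saddlechange} to keep saddle connections long.
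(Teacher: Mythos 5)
Your overall strategy matches the paper's: bound $d_{\AGY}$ between the two Jenkins--Strebel differentials via uniform continuity of the Hubbard--Masur map restricted to a compact set (this is exactly the paper's Lemma~\ref{lem:nbhd}, using the compact set $J=(\mathcal K\times\MF_g)\cap\Ext^{-1}([c_{\mathcal K}^{-2},c_{\mathcal K}^{2}])\cap\JS^{-1}(\RR_{>0}\cdot K_\sigma)$), then use Proposition~\ref{prop:fund} and Lemma~\ref{lem:saddlechange} to control the critical graph along the exponential path.

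However, there is a genuine gap in your argument for statement~(1). You conclude that ``the combinatorics of the horizontal foliation is constant along the path'' because the path is too short for any saddle connection of length $\geq\sigma$ to collapse. But ``no saddle connection collapses'' is only a necessary, not sufficient, condition for the \emph{horizontal} saddle connection graph to be preserved. A priori, a non-horizontal saddle connection could become horizontal along $\kappa(t)=\Psi_{\hat q}(tv)$, or a horizontal one could tilt off the horizontal, without anything shrinking to zero, if $\kappa$ were allowed to wander off the unstable leaf. The paper's proof closes exactly this gap with one extra observation that you omit: because $q=\JS(X,\alpha)$ and $q'=\JS(X',\alpha)$ have the \emph{same} horizontal foliation (hence the same imaginary parts of all periods), the cohomology vector $v=\Psi^{-1}(q')$ lies in the real summand $H^1_{\mathrm{odd}}(Z,\Sigma;\RR)$. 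Realness of $v$ is what forces the imaginary part of every period to be constant along $\kappa(t)$, so horizontal saddles stay horizontal, non-horizontal ones stay non-horizontal, and (combined with properness of $\Psi$) the horizontal saddle graph cannot change. Without this step, ``survives'' in your sense (nonzero length) does not yield ``lives in the same facet of $\MRG(S_g\setminus\vec\alpha)$.''

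A secondary issue is your choice to compare the unit-area rescalings $\hat q$ and $\hat q'$: these have horizontal foliations $\alpha/\sqrt{\Ext_X(\alpha)}$ and $\alpha/\sqrt{\Ext_{X'}(\alpha)}$, which are only projectively equal, so $v$ would not be exactly real. The paper sidesteps this by applying the argument to the unnormalized $\JS(X,\alpha)$, $\JS(X',\alpha)$ (which share the foliation $\alpha$ on the nose) and using that rescaling is an AGY isometry to transfer the distance bound. You also write $|e|_{\Sp^1(X,\vec\alpha)}=|e|_{\hat q}$, which is off by a factor of $\sqrt{\Ext_X(\alpha)}$ since $\Sp^1$ rescales by $1/\Ext_X(\alpha)$ rather than by $1/\sqrt{\Ext_X(\alpha)}$; this is harmless once one compares the raw critical graphs and handles the extremal-length ratio separately via Theorem~\ref{thm:Kerckhoff}, as you gesture at, but as written the identification is incorrect.
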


The proof of this Proposition has two steps: we first show that the hypotheses imply that the differentials $\JS(X, \alpha)$ and $\JS(X', \alpha)$ are close in the AGY metric, then use properties of the metric to conclude the desired statements.

The first step is accomplished using the uniform continuity of the Hubbard--Masur map.
To get this, we need to restrict to a compact set.
Cutting out $K_\sigma$ allows us to avoid a neighborhood of the multiple zero locus, but we must also impose a bound on the areas of differentials under consideration. 
As such, we first record an {\em a priori} bound on the extremal length of foliations as the base surface ranges over a compact set.
The following is a direct consequence of Theorem \ref{thm:Kerckhoff}.

\begin{lemma}\label{lem:Extcomp}
For every compact $\mathcal K \subseteq \T_g$, there is a $c_{\mathcal K} > 1$ such that for every $\lambda \in \MF_g$ and every $X, X' \in \mathcal K$,
\[
c_{\mathcal K}^{-1} \le 
\frac{\sqrt{\Ext}_{X'}(\lambda)}{\sqrt{\Ext}_{X}(\lambda)}
\le c_{\mathcal K}.
\]
\end{lemma}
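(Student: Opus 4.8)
The plan is to deduce this directly from Kerckhoff's formula (Theorem~\ref{thm:Kerckhoff}) together with the elementary fact that a compact subset of a metric space has finite diameter. First I would set
\[
D = D(\mathcal K) := \sup_{X, X' \in \mathcal K} d_{\Teich}(X, X');
\]
since $\mathcal K$ is compact and $d_{\Teich}$ is a genuine (in particular continuous) metric on $\T_g$, we have $D < \infty$.

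Next, fix $X, X' \in \mathcal K$ and $\lambda \in \MF_g$. Applying Theorem~\ref{thm:Kerckhoff} and the definition of $D$ yields
\[
\frac{1}{2}\log\!\left( \frac{\Ext_{X'}(\lambda)}{\Ext_X(\lambda)} \right)
\le \max_{\mu \in \MF_g} \frac{1}{2}\log\!\left( \frac{\Ext_{X'}(\mu)}{\Ext_X(\mu)} \right)
= d_{\Teich}(X, X') \le D,
\]
so that $\Ext_{X'}(\lambda)/\Ext_X(\lambda) \le e^{2D}$, and hence $\sqrt{\Ext}_{X'}(\lambda)/\sqrt{\Ext}_X(\lambda) \le e^{D}$. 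Interchanging the roles of $X$ and $X'$ gives the matching lower bound, so setting $c_{\mathcal K} := \max\{e^{D}, 2\}$ — the $2$ only there to guarantee $c_{\mathcal K} > 1$ in the degenerate case where $\mathcal K$ is a single point — finishes the proof.

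There is no substantive obstacle here; this is a bookkeeping lemma. The only point that merits a word of care is that $\Ext_X(\cdot)$ is being used in its extension to all of $\MF_g$ (namely $\Ext_X(\lambda) = \|\JS(X,\lambda)\|$, as recorded after Theorem~\ref{theo:HM1}), which is precisely the convention under which Kerckhoff's formula is stated in Theorem~\ref{thm:Kerckhoff}; since this extension is $2$-homogeneous in the transverse measure, the ratio $\Ext_{X'}(\lambda)/\Ext_X(\lambda)$ is insensitive to rescaling $\lambda$, and passing to square roots introduces no projectivization subtleties.
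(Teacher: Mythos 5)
Your proof is correct and takes exactly the approach the paper intends: the paper itself dispatches this lemma with the single remark that it is ``a direct consequence of Theorem~\ref{thm:Kerckhoff},'' and your argument—finite Teichm\"uller diameter of $\mathcal K$ plus Kerckhoff's extremal-length characterization of $d_{\Teich}$—is precisely that consequence spelled out. The closing remark about the $2$-homogeneous extension of $\Ext$ to $\MF_g$ is a reasonable sanity check, though not strictly needed since Theorem~\ref{thm:Kerckhoff} is already stated for arbitrary $\lambda \in \MF_g$.
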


We now show that if $X$ and $X'$ are close in the Teichm{\"u}ller metric, then most of their Jenkins--Strebel differentials are close in the AGY metric.

\begin{lemma}\label{lem:nbhd}
Fix $\mathcal K$, $\sigma$ as in Proposition \ref{prop:varA}.
For any $\zeta>0$, there is a $\delta>0$ so that for any $X, X' \in \mathcal K$ with $d_{\Teich}(X, X') < \delta$ and any $\alpha \notin F(\mathcal K, \sigma)$,
\[d_{\AGY}(\JS(X, \alpha), \JS(X', \alpha)) < \zeta.\]
\end{lemma}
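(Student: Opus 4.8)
The plan is to deduce the AGY closeness from continuity of the Hubbard--Masur map on an appropriate compact piece of $\QoT$. First I would fix a normalization: since extremal length is $2$-homogeneous, for each multi-curve $\alpha$ set $\lambda_X := \alpha / \sqrt{\Ext_X(\alpha)} \in \MF_g$, so that $\JS(X, \lambda_X) \in \QoT$ is the unit-area rescaling of $\JS(X, \alpha)$, and similarly for $X'$. Because the rescaling action $q \mapsto rq$ is an isometry of the AGY metric (as observed just after the definition of $d_{\AGY}$), it suffices to bound $d_{\AGY}(\JS(X, \lambda_X), \JS(X', \lambda_{X'}))$ in terms of $\zeta$. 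I will split this into two comparisons: (i) moving the base surface from $X$ to $X'$ while keeping the foliation $\lambda_X$ fixed, and (ii) changing the foliation from $\lambda_X$ to $\lambda_{X'}$ over the fixed base $X'$.

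For step (i), the key point is that the relevant differentials all live in a fixed compact subset of $\QoT$. By hypothesis $\alpha \notin F(\mathcal K, \sigma)$, so for every $X \in \mathcal K$ the unit-area differential $\JS(X, \lambda_X)$ lies in $K_\sigma$: it is in the principal stratum and has $\ell_{\min} \ge \sigma$. The set $K_\sigma$ is $\Mod_g$-invariant, and its intersection with $\pi^{-1}(\mathcal K')$ for any compact $\mathcal K' \supseteq \mathcal K$ is compact (having shortest saddle connection bounded below and area equal to one confines the differential to a compact part of the principal stratum over $\mathcal K'$). On this compact set the Hubbard--Masur homeomorphism $\JS \colon \T_g \times \MF_g \to \QT$ of Theorem \ref{theo:HM1}, composed with unit-area rescaling, is uniformly continuous with respect to $d_{\Teich}$ on the $\T_g$ factor and any fixed metric on the relevant compact subset of $\MF_g$; hence there is $\delta_1>0$, depending only on $\mathcal K$ and $\sigma$, so that $d_{\Teich}(X, X') < \delta_1$ forces $d_{\AGY}(\JS(X,\lambda_X), \JS(X',\lambda_X)) < \zeta/2$. (Here one must also note that $\lambda_X$ ranges over the compact subset of $\MF_g$ on which this uniform continuity is being invoked, which is arranged by the normalization together with Lemma \ref{lem:Extcomp}.)

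For step (ii), Lemma \ref{lem:Extcomp} gives $c_{\mathcal K}^{-1} \le \sqrt{\Ext}_{X'}(\alpha)/\sqrt{\Ext}_X(\alpha) \le c_{\mathcal K}$, so $\lambda_{X'} = r \lambda_X$ with $r \in [c_{\mathcal K}^{-1}, c_{\mathcal K}]$ and, crucially, $r \to 1$ as $d_{\Teich}(X,X') \to 0$ — indeed by Theorem \ref{thm:Kerckhoff} the ratio of extremal lengths of any single foliation is pinched by $e^{\pm 2 d_{\Teich}(X,X')}$. Thus $\JS(X', \lambda_{X'}) = r \cdot \JS(X', \lambda_X)$ is a scalar multiple, with $r$ close to $1$, of the unit-area differential $\JS(X', \lambda_X) \in K_\sigma$. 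Since $K_\sigma \cap \pi^{-1}(\mathcal K)$ is compact and the scaling flow $q \mapsto e^s q$ moves points at unit AGY speed in a controlled way on compact sets, shrinking $\delta$ to some $\delta_2 > 0$ (again depending only on $\mathcal K, \sigma$) guarantees $d_{\AGY}(\JS(X',\lambda_X), \JS(X',\lambda_{X'})) < \zeta/2$. Taking $\delta = \min(\delta_1, \delta_2)$ and applying the triangle inequality finishes the proof.

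The main obstacle I anticipate is making step (i) genuinely uniform: the naive statement "Hubbard--Masur is continuous" is not enough, because $\alpha$ varies over an infinite family and the foliations $\lambda_X$ could a priori wander off to infinity or degenerate in $\MF_g$. The fix is precisely the interplay between the normalization and the definition of $F(\mathcal K, \sigma)$ — ruling out $\alpha \in F(\mathcal K,\sigma)$ confines the normalized differentials to the compact set $K_\sigma \cap \pi^{-1}(\mathcal K')$, and Theorem \ref{theo:HM1} identifies this with a compact subset of $\T_g \times \MF_g$ on which uniform continuity is automatic. One should double-check that the topology on $\MRG$-side objects is not needed here; this lemma is purely about the AGY metric on $\cQ$, and the passage to ribbon graphs is deferred to the subsequent argument in Proposition \ref{prop:varA}.
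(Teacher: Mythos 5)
Your proof is on target and uses the same core idea as the paper's: uniform continuity of the Hubbard--Masur homeomorphism $\JS \colon \T_g \times \MF_g \to \QT$ on a compact subset carved out by (a) restricting the base surface to $\mathcal K$, (b) normalizing the extremal length of the foliation, and (c) requiring $\alpha \notin F(\mathcal K, \sigma)$ so the associated unit-area differentials stay in $K_\sigma$. However, your opening reduction has a small slip: the claim that ``it suffices to bound $d_{\AGY}(\JS(X, \lambda_X), \JS(X', \lambda_{X'}))$'' does not follow from the rescaling isometry alone, because $\JS(X, \alpha) = \sqrt{\Ext}_X(\alpha) \cdot \JS(X, \lambda_X)$ while $\JS(X', \alpha) = \sqrt{\Ext}_{X'}(\alpha) \cdot \JS(X', \lambda_{X'})$, and these rescaling factors differ in general. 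This is precisely why the paper fixes a single foliation $\lambda := \alpha/\sqrt{\Ext}_X(\alpha)$ (normalized on $X$, not on $X'$) and compares $\JS(X, \lambda)$ with $\JS(X', \lambda)$: both then rescale by the \emph{same} factor $\sqrt{\Ext}_X(\alpha)$ to produce $\JS(X,\alpha)$ and $\JS(X',\alpha)$, so the isometry applies cleanly. Fortunately, that is exactly what your step (i) accomplishes, so step (ii) is superfluous --- once you have $d_{\AGY}(\JS(X,\lambda_X), \JS(X',\lambda_X)) < \zeta$, you are done, without any need to compare $\lambda_X$ with $\lambda_{X'}$. (Incidentally, step (ii) also misidentifies which differential has unit area: $\JS(X',\lambda_{X'})$ does, not $\JS(X',\lambda_X)$, whose area is $\Ext_{X'}(\alpha)/\Ext_X(\alpha)$.) With step (ii) deleted and the reduction rephrased to fix a single normalization, your argument is the paper's argument.
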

\begin{proof}
Fix any metric $d_{\mathcal{MF}_g}$ on $\MF_g$ equipping it with the standard topology. By Theorem \ref{theo:HM1}, the map $\JS: \T_g \times \MF_g \to \QT$ is a homeomorphism, and is hence uniformly continuous on the compact set
\[J:=(\mathcal K \times \MF_g) \cap 
\Ext^{-1}([c_{\mathcal K}^{-2}, c_{\mathcal K}^2]) \cap 
\JS^{-1}(\mathbb{R}_{>0} \cdot K_\sigma).\]
That is, for any $\zeta >0$ there is a $\delta >0$ so that for
any $(X,\lambda), (X', \lambda) \in J$ with 
\[d_{\Teich}(X, X') < \delta \text{ and } d_{\MF_g}(\lambda, \lambda') < \delta\]
then we have that
\[d_{\AGY}(\JS(X, \lambda), \JS(X', \lambda')) < \zeta.\]

In particular, Lemma \ref{lem:Extcomp} (alternatively, Theorem \ref{thm:Kerckhoff}) implies that
\[\sqrt{\Ext}_{X'}\left(\alpha/\sqrt{\Ext}_X(\alpha)\right) 
= \frac{\sqrt{\Ext}_{X'}(\alpha)}{\sqrt{\Ext}_{X}(\alpha)}
\in [c_{\mathcal K}^{-1}, c_{\mathcal K}]
\]
so setting $\lambda = \lambda' = \alpha/ \sqrt{\Ext}_X(\alpha)$ we have that $\JS(X, \lambda)$ and $\JS(X', \lambda)$ are $\zeta$ apart.
The statement for $\alpha$ itself follows from the fact that rescaling is an isometry of the AGY metric.
\end{proof}

Now since $\JS(X, \alpha)$ and $\JS(X', \alpha)$ are close and live in the same leaf of the unstable foliation, we can use the exponential map $\Psi$ to connect them via a path completely contained in the unstable leaf $\{\JS(X, \alpha) \ | X \in \mathcal{T}_g\}$. Analyzing this map yields the proof of the main result of this section.

\begin{proof}[Proof of Proposition \ref{prop:varA}]
We begin by observing that, by definition of the $\Xi^1$ map,
\[
\frac{|e|_{\Xi^1(X, \vec{\alpha})}}
{|e|_{\Xi^1(X', \vec{\alpha})}}
=
\frac{|e|_{\Xi(X, \vec{\alpha})}}
{|e|_{\Xi(X', \vec{\alpha})}}
\cdot 
\frac{\Ext_{X'}(\alpha)}
{ \Ext_X(\alpha)}.
\]
By Theorem \ref{thm:Kerckhoff}, the ratio of extremal lengths is bounded arbitrarily close to 1 (so long as we take $\delta$ small enough), so it suffices to compare the geometry of the non-normalized critical graphs.
For ease of notation, throughout the rest of the proof let us denote
\[q := \JS(X, \alpha) 
\text{ and }
q' := \JS(X', \alpha).\]

Fix $0 < \varepsilon < 2/25$.
Lemma \ref{lem:nbhd} tells us that by taking $\delta$ small enough we can ensure $q$ and $q'$ are $\varepsilon/4 < 1/50$ close in the AGY metric, so by Proposition \ref{prop:fund}.\ref{item:expinbd} we have that $q' = \Psi(v)$ for some $v \in B(0,\varepsilon/2)$.
But now we know that $q$ and $q'$ have the same horizontal foliation, hence the same vertical periods. Therefore we must have that $v$ is real, i.e.,
$v \in H^1_{\text{odd}}(Z, \Sigma; \mathbb{R})$
with respect to the natural splitting 
\[H^1_{\text{odd}}(Z, \Sigma; \mathbb{C}) \cong H^1_{\text{odd}}(Z, \Sigma; \mathbb{R}) \oplus H^1_{\text{odd}}(Z, \Sigma; i\mathbb{R}).\]

In particular, this implies that the period of every horizontal saddle remains real along $\kappa(t)$. Since the exponential map $\Psi$ is well-defined (and proper), we see that every horizontal saddle must persist along the entire path $\{\kappa(t)\}_{t=0}^1$ (otherwise some saddle would shrink to zero, but doing so leaves the stratum). Hence the topological type of the horizontal saddle connection graphs of $\kappa(0)=q$ and $\kappa(1)=q'$ are the same, establishing the first part of the Proposition.

For the second part, we note that $\|v\|_{\kappa(t)}$ grows by at most a factor of $2$ along the entire path (Proposition \ref{prop:fund}.\ref{item:expnormbd}), so the length of $\kappa$ is at most $\varepsilon$.
Since each horizontal saddle persists along the entire path, Lemma \ref{lem:saddlechange} implies that the length of each can change by a factor of at most $e^{\mathrm{length}(\kappa)}$.
Thus the lengths on $q$ and $q'$ of every edge of $\Sp(X, \vec{\alpha})$ have ratio bounded by $e^{\pm\varepsilon}$, proving the second claim. 
\end{proof}

\section{Horoball measures}
\label{sec:critical_equi}

\subsection*{Outline of this section.} In this section we introduce ``critical-JS-horoballs'' and show that, as they expand over moduli space, they equidistribute with respect to the Masur-Veech measure. See Proposition \ref{prop:unstable_horoball_equid} for a precise statement. Our preliminary discussion relies on work of Athreya, Bufetov, Eskin, and Mirzakhani \cite{ABEM12} as well as an expression for critical-JS-horoballs in terms of the fibered Kontsevich measure.
The equidistribution result is a consequence of work of Forni \cite{push}, which in turn relies on breakthroughs of Eskin, Mirzakhani, and Mohammadi \cite{EM,EMM}. Throughout we use the normalizations in \cite{ABEM12}, \cite{ABEMeff}, and \cite{spine} for all the measures considered.

\subsection*{The Masur-Veech measure.}
Let $\vg := (\vg_1,\dots,\vg_k)$ be an ordered, oriented simple closed multi-curve on $S_g$ with underlying multi-curve $\gamma \in \mathcal{MF}_g$. Recall that $\mathrm{Stab}_0(\vg) \subseteq \mathrm{Mod}_g$ denotes the oriented stabilizer of $\vg$, i.e. the set of mapping classes that fix each component of $\vg$ together with its orientation.

Recall that $\nu_{\mathrm{\mathrm{MV}}}$ denotes the Masur-Veech measure on $\mathcal{Q}^1\mathcal{T}_g$. The forgetful map $\pi: \mathcal{Q}^1\mathcal{T}_g \to \mathcal{T}_g$ pushes the Masur--Veech measure down to a Lebesgue-class measure $\bfm := \pi_* \nu_{\MV}$ on $\T_g$, which we also refer to as the Masur--Veech measure.
Both $\nu_\MV$ and $\bfm$ are $\Mod_g$-invariant. Let $\tnu_\MV$ and $\tbfm$ denote the corresponding local pushforwards to $\QoT / \Stab_0(\vg)$ and $\mathcal{T}_g/\mathrm{Stab}_0(\vg)$, and, similarly, let $\hnu_\MV$ and $\hbfm$ be the pushforwards of $\tnu_\MV$ and $\tbfm$ to $\QoM$ and $\mathcal{M}_g$. 
One could of course also define $\tbfm$ and $\hbfm$ by pushing forward $\tnu_\MV$ and $\hnu_\MV$ under the corresponding forgetful maps. Denote the total mass of $\widehat{\nu}_{\mathrm{\mathrm{MV}}}$, or, equivalently, $\widehat{\mathbf{m}}$, by $b_g > 0$.

Recall that for any Riemann surface $X$, we denote by $S(X)$ the sphere of unit area quadratic differentials on $X$. Let $s_X$ be the conditional probability measure on $S(X)$ induced by $\mathbf{m}$ on $\mathcal{T}_g$. It is characterized by the disintegration formula
\begin{equation}\label{eqn:MV_sphere_disint}
d \nu_{\MV}(X,q) = d s_X(q) \, d \bfm(X),
\end{equation}
together with similar expressions for the measures $\tnu_\MV$ and $\hnu_\MV$ on quotients.
\footnote{We note that we do not need to worry about sizes of stabilizers when recording disintegration formulas at the level of moduli space.
For $g \ge 3$, we have that $\bfm$-almost every $X \in \M_g$ has no nontrivial automorphisms and so the fiber of $\QoM$ over $X$ is the entire sphere $S(X)$. For $g = 2$ every surface is hyperelliptic, but so is every quadratic differential.}

\subsection*{The Hubbard-Masur function.} Recall that the singular measured foliation $\Im(q) \in \mathcal{MF}_g$ denotes the horizontal foliation of $q \in \mathcal{Q}^1\mathcal{T}_g$, and that $[\Im(q)] \in \mathcal{PMF}_g$ denotes its projective class. As observed in \cite{ABEM12}, every leaf of the unstable foliation $\Fol^{u}$ of $\mathcal{Q}^1\mathcal{T}_g$ carries a conditional measure that is uniformly expanded by the Teichm{\"u}ller geodesic flow.
This measure can be explicitly obtained as follows. By definition, any leaf of $\Fol^u$ is of the form
\[\Fol_{[\beta]}^u = \{ q \in \mathcal{Q}^1\mathcal{T}_g \mid [\Im(q)] = [\beta] \in \mathcal{PMF}_g\}\]
for some $\beta \in \MF_g$. By Theorem \ref{theo:HM1},
this set can in turn be identified with the open set $\MF_g(\beta) \subseteq \MF_g$ of singular measured foliations on $S_g$ that together with $\beta$ bind the surface. The Thurston measure $\mu_{\mathrm{Thu}}$ on $\MF_g$ therefore restricts to a non-trivial measure on $\MF_g(\beta)$ and hence gives rise to a measure on $\Fol_{[\beta]}^u$ which we denote by $\nu_{u,\beta}$.

Recall that $\{g_t \colon \mathcal{Q}^1\mathcal{T}_g \to \mathcal{Q}^1\mathcal{T}_g\}_{t \in \mathbb{R}}$ is the Teichmüller geodesic flow on $\mathcal{Q}^1\mathcal{T}_g$. The fundamental scaling property described by the formula
\begin{equation}\label{eqn:scaling}
 (g_t)_* \nu_{u,\beta} = e^{-(6g-6)t} \nu_{u,\beta}   
\end{equation}
is then a consequence of the fact that the Teichm{\"u}ller geodesic flow stretches the horizontal direction, expanding the measure on the vertical foliation, and thus acting by multiplication by $e^t$ on $\MF_g(\beta)$.

By Theorems \ref{theo:HM1} and \ref{theo:GM},
the forgetful map $\pi:\QoT \to \T_g$ restricts to a homeomorphism $\pi_{[\beta]}$ between $\smash{\Fol^u_{[\beta]} \cong \MF(\beta)}$ and $\T_g$.
The pushforward of $\nu_{u, \beta}$ by this map is in the Lebesgue measure class. 
Following \cite{ABEM12}, we denote by
\[\lambda^+(q) :=
\frac{d \bfm}{d (\pi_{[\beta]})_* \nu_{u,\beta}}(X)\]
the corresponding Radon-Nikodym derivative, where $\smash{q = \pi^{-1}_{[\beta]}(X)}$, that is, $q =\JS(X, \beta)/ \sqrt{\Ext}_X(\beta)$.
We recall from \cite{ABEM12} that the {\em Hubbard--Masur function} is defined to be the following integral:
\[\Lambda(X) := \int_{Q^1(X)} \lambda^+(q) \thinspace d s_X.\]
Directly from the definitions, we observe that both $\lambda^+(q)$ and $\Lambda(X)$ are $\Mod_g$-invariant; see also \cite[top of page 1063]{ABEM12}.

Tracing through the definitions, one can arrive at the following formulation.

\begin{lemma}\label{lem:HMfunc_Ext}
\cite[Proposition 2.3 (iii)]{ABEM12}
For any $X \in \T_g$, $$\Lambda(X) = \mu_{\mathrm{Thu}} \left( \{\beta \in \MF \mid \Ext_{X}(\beta) \le 1\} \right).$$
\end{lemma}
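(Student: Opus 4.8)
The plan is to reduce the identity to a comparison of two measures on the fiber $S(X)=Q^1(X)$. By the Hubbard--Masur Theorem~\ref{theo:HM1} the map $q\mapsto[\Im(q)]$ identifies $S(X)$ with $\mathcal{PMF}_g$ (indeed $\Im$ restricts to a homeomorphism of $S(X)$ onto $\{\beta\in\MF_g:\Ext_X(\beta)=1\}$, using $\|\JS(X,\beta)\|=\Ext_X(\beta)$, and each projective class has a unique representative of unit extremal length); transport $s_X$ to a probability measure on $\mathcal{PMF}_g$ and regard $\lambda^+$ as a function on $\mathcal{PMF}_g\cong S(X)$. Since $\beta\mapsto\sqrt{\Ext_X(\beta)}$ is continuous, positive and $1$-homogeneous on $\MF_g\setminus\{0\}$, it is a radial coordinate identifying $\MF_g\setminus\{0\}$ with $\mathcal{PMF}_g\times\RR_{>0}$, and as $\mu_{\mathrm{Thu}}$ is $(6g-6)$-homogeneous it disintegrates as $d\mu_{\mathrm{Thu}}=(6g-6)\,r^{6g-7}\,dr\,d\widehat{\mu}_X$ for a finite measure $\widehat{\mu}_X$ on $\mathcal{PMF}_g$; since $\int_0^1(6g-6)\,r^{6g-7}\,dr=1$ we get $\widehat{\mu}_X(\mathcal{PMF}_g)=\mu_{\mathrm{Thu}}(\{\Ext_X(\beta)\le1\})$, the right-hand side of the Lemma. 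It therefore suffices to prove the Radon--Nikodym identity $\widehat{\mu}_X=\lambda^+\cdot s_X$ on $\mathcal{PMF}_g\cong S(X)$, for then $\mu_{\mathrm{Thu}}(\{\Ext_X(\beta)\le1\})=\widehat{\mu}_X(\mathcal{PMF}_g)=\int_{S(X)}\lambda^+\,ds_X=\Lambda(X)$.

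To prove this I would pass to Hubbard--Masur coordinates and compare two ways of slicing $\nu_{\MV}$. By Theorem~\ref{theo:GM}, $(\Re,\Im)$ identifies $\QT$ with $\MF_g\times\MF_g\setminus\Delta$; the key input is that it takes $\nu_{\MV}$ on $\QT$ to $\mu_{\mathrm{Thu}}\otimes\mu_{\mathrm{Thu}}$ (with the normalizations of~\cite{ABEM12} the constant is $1$), while $\nu_{\MV}$ on $\QoT$ is obtained by the cone construction over $\{\|q\|=1\}$ with radial coordinate $\|q\|$. Fix a small open $\mathcal U\ni X$ in $\T_g$ and a small open $\mathcal V\subseteq\mathcal{PMF}_g$, and compute $\nu_{\MV}(\{q\in\QoT:\pi(q)\in\mathcal U,\ [\Im(q)]\in\mathcal V\})$ in two ways. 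Disintegrating over $\pi$ as in~\eqref{eqn:MV_sphere_disint} gives $\int_{\mathcal U}s_Y(\{q\in S(Y):[\Im(q)]\in\mathcal V\})\,d\bfm(Y)$. On the other hand, the cone construction rewrites this mass as $\nu_{\MV}^{\QT}$ of the region $\{(\mu,\lambda):\pi(\mu,\lambda)\in\mathcal U,\ A(\mu,\lambda)\le1,\ [\lambda]\in\mathcal V\}$, where $A$ denotes the area of the corresponding differential; for fixed $\lambda$ its $\mu$-slice equals $\{\Re\JS(Y,\lambda):Y\in\mathcal U,\ \Ext_Y(\lambda)\le1\}$ (since $A(\Re\JS(Y,\lambda),\lambda)=\|\JS(Y,\lambda)\|=\Ext_Y(\lambda)$), which is the image of $\mathcal U\cap\{Y:\Ext_Y(\lambda)\le1\}$ under $Y\mapsto\Re\JS(Y,\lambda)$. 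Tracing definitions, the pushforward of $\mu_{\mathrm{Thu}}|_{\MF_g(\lambda)}$ by the inverse of this map is exactly $(\pi_{[\lambda]})_*\nu_{u,\lambda}$, so Tonelli's theorem yields the exact identity
\[
\int_{\mathcal U}s_Y\!\left(\{q\in S(Y):[\Im(q)]\in\mathcal V\}\right)d\bfm(Y)=\int_{\{[\lambda]\in\mathcal V\}}(\pi_{[\lambda]})_*\nu_{u,\lambda}\!\left(\mathcal U\cap\{Y:\Ext_Y(\lambda)\le1\}\right)d\mu_{\mathrm{Thu}}(\lambda).
\]

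The final step is to differentiate this with respect to $\bfm$ and evaluate at $X$. The left-hand side has $\bfm$-density $s_Y(\{q\in S(Y):[\Im(q)]\in\mathcal V\})$ at $Y$. For the right-hand side, Tonelli gives $\bfm$-density at $X$ equal to $\int_{\{[\lambda]\in\mathcal V,\ \Ext_X(\lambda)\le1\}}\frac{d(\pi_{[\lambda]})_*\nu_{u,\lambda}}{d\bfm}(X)\,d\mu_{\mathrm{Thu}}(\lambda)$; since $(\pi_{[\lambda]})_*\nu_{u,\lambda}$ depends only on $[\lambda]$, the radial integration against $(6g-6)\,r^{6g-7}\,dr$ over $[0,1]$ contributes a factor $1$, and the defining relation $\frac{d(\pi_{[\lambda]})_*\nu_{u,\lambda}}{d\bfm}(X)=1/\lambda^+(q)$ (with $q=\JS(X,\lambda_0)\in S(X)$ the unit-extremal-length representative of $[\lambda]$) converts this into $\int_{\mathcal V}(1/\lambda^+)\,d\widehat{\mu}_X$. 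Equating $\bfm$-densities at $X$ for every $\mathcal V$ gives $ds_X=(1/\lambda^+)\,d\widehat{\mu}_X$, i.e.\ $\widehat{\mu}_X=\lambda^+\cdot s_X$ on $S(X)$; this holds for $\bfm$-a.e.\ $X$ by Lebesgue differentiation, hence for all $X$ since both sides of the Lemma depend continuously on $X$ (a consequence of the Hubbard--Masur homeomorphism).

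I expect the main obstacle to be the coordinate identification $\nu_{\MV}\leftrightarrow\mu_{\mathrm{Thu}}\otimes\mu_{\mathrm{Thu}}$ and, in particular, that its normalizing constant equals $1$---this is precisely where the conventions of~\cite{ABEM12} are essential, and indeed the present Lemma is essentially their Proposition~2.3(iii). The remainder---keeping track of the cone construction relating $\nu_{\MV}$ on $\QT$ and on $\QoT$, the Tonelli interchanges, and the identification of the $\mu$-slices with images under the maps $\pi_{[\lambda]}$---is careful but routine bookkeeping.
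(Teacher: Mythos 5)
The paper does not actually prove this lemma---it is cited directly to Proposition 2.3(iii) of \cite{ABEM12}---so there is no ``paper's own proof'' to compare against; rather, your argument should be measured against that reference. Your reconstruction is correct and follows the same blueprint. The reduction to showing that the coned-off Thurston measure $\widehat{\mu}_X$ satisfies $\widehat{\mu}_X = \lambda^+\cdot s_X$ on $S(X)\cong\mathcal{PMF}_g$ is exactly Proposition~\ref{prop:ABEMb} of the paper (also from \cite[Prop.~2.3]{ABEM12}), and your double-disintegration/Tonelli computation---slicing $\nu_{\mathrm{MV}}$ once over $\pi$ and once via the Gardiner--Masur coordinates $(\Re,\Im)$ and the cone construction---is the standard route to it. The details I checked all go through: the identification of the fiber of $(\Re,\Im)$ over a fixed $\lambda$ with $\{Y\in\mathcal{U}:\Ext_Y(\lambda)\le 1\}$ via $Y\mapsto\Re\JS(Y,\lambda)$, the fact that its pushforward of Thurston measure is $(\pi_{[\lambda]})_*\nu_{u,\lambda}$ (since $\pi(\mu,\lambda)=\pi(\mu,\lambda/i(\mu,\lambda))$), the $[\lambda]$-invariance of $\lambda^+\bigl(\JS(X,\lambda)/\sqrt{\Ext_X(\lambda)}\bigr)$, and the radial integration $\int_0^1(6g-6)r^{6g-7}\,dr=1$ against the $1$-homogeneous coordinate $r=\sqrt{\Ext_X}$.

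Two remarks. First, you correctly flag the one input you leave unverified: that $(\Re,\Im)_*\nu_{\mathrm{MV}}^{\QT}=\mu_{\mathrm{Thu}}\otimes\mu_{\mathrm{Thu}}$ with multiplicative constant exactly $1$; with the lattice-point normalizations of \cite{ABEM12} (square-tiled surfaces for $\nu_{\mathrm{MV}}$, integral multicurves for $\mu_{\mathrm{Thu}}$, and a square-tiled surface being precisely a pair of integral multicurves) this is indeed so, but it is genuinely the load-bearing fact. Second, within the logic of the paper there is a much shorter route: Proposition~\ref{prop:ABEMb} is also taken as given from \cite{ABEM12}, and granting it the Lemma is immediate, since
\[
\Lambda(X)=\int_{S(X)}\lambda^+\,ds_X=\int_{S(X)}d\bigl([\Im]_*\mu_{\mathrm{Thu}}^X\bigr)=\mu_{\mathrm{Thu}}^X(\mathcal{PMF}_g)=\mu_{\mathrm{Thu}}\bigl(\{\Ext_X\le1\}\bigr),
\]
the last equality being the cone decomposition you already invoke. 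Your longer argument amounts to re-proving Proposition~\ref{prop:ABEMb} along the way, which is harmless but unnecessary given that the paper takes it as a black box.
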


In fact, Mirzakhani proved the following strong result.

\begin{theorem}\label{thm:HMconst}
\cite[Theorem 5.10]{Dumas_skin}
The function $X \in \mathcal{T}_g \mapsto \Lambda(X)$ is constant.
\end{theorem}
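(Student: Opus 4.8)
The statement asserts that the Hubbard--Masur function $\Lambda(X) = \mu_{\mathrm{Thu}}(\{\beta \in \MF_g \mid \Ext_X(\beta) \le 1\})$ is independent of $X \in \T_g$. By Lemma \ref{lem:HMfunc_Ext} this is a statement purely about the Thurston volume of extremal-length unit balls, so the plan is to exploit the two-homogeneity of extremal length together with the $\Mod_g$-invariance of $\mu_{\mathrm{Thu}}$ and the mixing dynamics of the Teichm\"uller geodesic flow. First I would record the scaling relation \eqref{eqn:scaling}: under $g_t$, the leafwise Thurston measure $\nu_{u,\beta}$ is multiplied by $e^{-(6g-6)t}$, which reflects the fact that extremal length scales quadratically while $\mu_{\mathrm{Thu}}$ has homogeneity degree $6g-6$. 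Combined with the disintegration \eqref{eqn:MV_sphere_disint} of the Masur--Veech measure $\bfm = \pi_*\nu_\MV$ over $\T_g$, this shows that the function $\lambda^+(q)$ transforms in a controlled way along geodesic flow orbits, and that $\Lambda(X) = \int_{Q^1(X)} \lambda^+(q)\, ds_X$ is, up to the flow-invariant normalization, essentially the local density of $\widehat\nu_\MV$ against the flow-invariant measure class.

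The key step is then to show $\Lambda$ is $g_t$-invariant as a function on $\T_g$ — but $g_t$ does not act on $\T_g$, only on $\QoT$ — so instead I would argue that $X \mapsto \Lambda(X)$ is constant along unstable leaves. Fix $\beta \in \MF_g$; the forgetful map restricts to a homeomorphism $\pi_{[\beta]}\colon \Fol^u_{[\beta]} \xrightarrow{\sim} \T_g$, and $g_t$ acts on $\Fol^u_{[\beta]}$ expanding $\nu_{u,\beta}$ by $e^{(6g-6)t}$. Pushing forward and using that $\bfm$ is (by construction of $\nu_\MV$ as a cone measure) invariant under the cone rescaling that intertwines with $g_t$ on the sphere bundle, one obtains that $\lambda^+(g_t q) = \lambda^+(q)$ along each unstable leaf; integrating over the sphere fiber and using that $g_t$ acts ergodically (indeed mixing) on $\QoM$ with respect to $\widehat\nu_\MV$, the $\Mod_g$-invariant function $\Lambda$ must be $\widehat\nu_\MV$-almost everywhere constant. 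Continuity of $\Lambda$ (which follows from Lemma \ref{lem:HMfunc_Ext}, since extremal length is continuous in $X$ and $\mu_{\mathrm{Thu}}$ of a ball varies continuously) then upgrades ``almost everywhere'' to ``everywhere''.

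Alternatively — and this is likely the cleaner route to cite — one appeals directly to Dumas's treatment: the integral $\int_{\QoM} \lambda^+ \, d\widehat\nu_\MV$ computes $b_g$ in two ways, and the ergodicity of the $\mathrm{SL}(2,\RR)$-action (Masur--Veech) forces the integrand's fiber-average $\Lambda$ to be constant. The main obstacle in either approach is the bookkeeping of normalizations: one must carefully track how the cone structure on $\mathcal{QT}_g$, the sphere disintegration $s_X$, and the Thurston measure's homogeneity interact, so that the claimed $g_t$-equivariance of $\lambda^+$ comes out with exactly the exponent that cancels. Since Theorem \ref{thm:HMconst} is quoted from \cite[Theorem 5.10]{Dumas_skin}, in the paper I would simply invoke that reference, but the conceptual content is precisely this interplay between the quadratic scaling of extremal length and the ergodicity of the Teichm\"uller flow.
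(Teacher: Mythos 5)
The paper itself does not prove this statement; it is quoted directly from \cite[Theorem 5.10]{Dumas_skin}, where the result is credited to Mirzakhani. So the comparison here is against the expected content of that citation, and your sketch does not reproduce it.

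The central gap is the claim that $\lambda^+(g_t q) = \lambda^+(q)$ along unstable leaves, which you derive by ``using that $\bfm$ is invariant under the cone rescaling that intertwines with $g_t$.'' This conflates two different operations. The cone rescaling $q \mapsto rq$ fixes the underlying Riemann surface $\pi(q)$ and is harmless; the Teichm\"uller geodesic flow $g_t$ \emph{moves} the underlying surface along a Teichm\"uller geodesic, and $\bfm$ has no invariance under the resulting (leaf-dependent) self-maps of $\T_g$. By Proposition~\ref{prop:ABEMb}, $\lambda^+(g_t q)$ is the Radon--Nikodym derivative of $[\Im]_*\mu_{\Th}^{\pi(g_t q)}$ against $s_{\pi(g_t q)}$, and both measures genuinely change as $\pi(g_t q)$ moves; there is no cancellation that makes this quantity independent of $t$. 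Moreover, if your claimed $g_t$-invariance held, ergodicity would force $\lambda^+$ itself to be $\widehat\nu_\MV$-a.e.\ constant, which is a far stronger statement than Theorem~\ref{thm:HMconst} and is not true: $\lambda^+$ is a nonconstant function whose regularity and integrability are the subject of nontrivial analysis in \cite{ABEM12}, and if it were constant the constancy of $\Lambda$ would be a triviality rather than the deep theorem it is. The ``alternative route'' at the end --- that $\int_{\QoM}\lambda^+\,d\widehat\nu_\MV$ ``computes $b_g$ in two ways'' so ergodicity ``forces the fiber-average to be constant'' --- is not an argument: equality of two global integrals does not constrain the integrand's fiber averages, and $\Lambda\circ\pi$ is not an $\mathrm{SL}(2,\RR)$- or $g_t$-invariant function on $\QoM$, so ergodicity does not apply to it. Mirzakhani's actual argument, as written up in \cite{Dumas_skin}, does not proceed via geodesic-flow invariance of $\lambda^+$; since the theorem is being cited rather than proved, the safest course is to defer to that reference rather than attempt a dynamical shortcut.
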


As such, we refer to this value as the {\em Hubbard--Masur constant} $\Lambda_g > 0.$

\subsection*{Horoballs}
For every $L>0$ we define the (total) extremal length horoball measure $\bfm_{\vg}^L$ on $\T_g$ by restricting $\bfm$ to the set of marked Riemann surfaces $X \in \mathcal{T}_g$ on which 
$\sqrt{\Ext}_X(\gamma)  \le L.$
We similarly define the (total) unstable horoball measure $\smash{\nu_{u,\vg}^L}$ on $\QoT$ by restricting $\nu_{u, \vg}$ to the preimage of this set under the forgetful map $\pi \colon \mathcal{Q}^1\mathcal{T}_g \to \mathcal{T}_g$.
The extremal length of $\gamma$ on $X$ is the same as the area of the quadratic differential $q= \JS(X,\gamma)$, which in turn is equal to the geometric intersection number of the horizontal and vertical foliations of $q$, so $\smash{\nu_{u,\vg}^L}$ can equivalently be defined by restricting the Thurston measure on $\MF_g(\gamma)$ to  the set
\[ \{\beta \in \MF_g(\gamma) \mid i(\beta, \gamma) \le L\}\]
and pushing this ``Thurston horoball measure'' $\mu_{\Th}^L$ forward to $\Fol^u_{[\gamma]}$.
We then take local pushforwards to get measures
\[\begin{array}{ccrl}
\widetilde{\bfm}_{\vg}^L     & \text{ on } &\T_g / \Stab_0(\vg), & \text{ } \\
\tnu_{u,\vg}^L     & \text{ on } &\QoT / \Stab_0(\vg), \\
\tmu_\Th^L            & \text{ on } &\MF(\gamma) / \Stab_0(\vg). \\
\end{array}\]

One can check that $\tmu_\Th^L$ is finite, as the usual Thurston measure is locally finite on $\MF_g$ and the closure (inside $\MF_g$, including the $0$ foliation) of a fundamental domain for the action of $\Stab_0(\vg)$ on the support of $\mu_\Th^L$ is compact. 
This implies that $\tnu_{u,\gamma}^L$ and $\widetilde{\bfm}_{\gamma}^L$ are also finite, so we can take the (global) pushforwards of $\tnu_{u,\gamma}^L$ and $\widetilde{\bfm}_{\gamma}^L$ to the moduli spaces $\QoM$ and $\mathcal{M}_g$; denote the resulting measures by $\hnu_{u,\gamma}^L$ and $\widehat{\mathbf{m}}_\gamma^L$.

Recall from \S\ref{sec:prelim} that $\MRG(S_g \setminus \vg; \Delta)$ denotes the moduli spaces of ribbon graphs of complementary subsurfaces to $\vg$ of total boundary length $2$ and matching boundary lengths along the components of $\vg$. Recall also that $\Sp^1(X, \vg) \in \MRG(S_g \setminus \vg;\Delta)$ denotes the critical graph of the Jenkins-Strebel differential $\mathrm{JS}(X,\gamma)$ rescaled so that the boundaries have total length $2$.

We also want to consider subsets of the horoballs above by conditioning on the shape of the horizontal separatrices of the corresponding Jenkins--Strebel differentials.
To this end, for any $L>0$ and any non-zero, continuous, compactly supported function
$h: \MRG(S_g \setminus \vg; \Delta) \to \RR$, define the $\Xi^1$-horoball measure on $\T_g$ by
\begin{equation}
\label{eq:Xihoro_meas}
d\bfm_{\agamma,h}^L(X) := \mathbbm{1}_{[0,L]}\left(\sqrt{\Ext}_X(\gamma)\right) h\left(\Xi^1(X, \vg) \right) d\bfm(X).
\end{equation}
We similarly define a version supported on the unstable leaf corresponding to $\gamma$:
\begin{equation}
\label{eq:Xihoro_meas_lift}
d\nu_{u,\agamma,h}^L(X,q) := \mathbbm{1}_{[0,L]}\left(\sqrt{\Ext}_X(\gamma)\right)  h\left(\Xi^1(X, \vg) \right)  d\nu_{u,\gamma}(q).
\end{equation}
We informally refer to the measures $\smash{\bfm_{\agamma,h}^L}$ as ``critical-JS-horoballs". Compare with the definition of ``RSC-horoballs'' from \cite[Equation (5.1)]{spine}. Notice that the measures $\smash{\bfm_{\agamma,h}^L}$ are not equal to the pushforwards of the measures $\smash{\nu_{u,\agamma,h}^L}$ under the Hubbard--Masur map: they differ by the Hubbard--Masur function.

The measures $\smash{\bfm_{\agamma,h}^L(X)}$ and  $\smash{\nu_{u,\agamma,h}^L(q)}$ are $\Stab_0(\vg)$-invariant and so as in the case of the total horoball measures we can take their local pushforwards $\smash{\widetilde{\bfm}_{\agamma,h}^L}$ and $\smash{\tnu_{u,\agamma,h}^L}$ after quotienting by $\Stab_0(\vg)$.
We can then further push each down to finite measures 
$\smash{\widehat{\bfm}_{\agamma,h}^L}$ and $\smash{\widehat \nu_{u,\agamma,h}^L}$
on $\M_g$ and $\QoM$, respectively.

\subsection*{The fibered Kontsevich measure} 
In the next two subsections, we discuss how the ``fibered Kontsevich measure'' describes critical-JS-horoballs in terms of the combinatorial data of metric ribbon graphs; see Proposition \ref{prop:pushTh_fibKon}.
We begin by quickly recalling the definition of this measure and refer the reader to \cite[Sections 2 and 7]{spine} for a more detailed overview.

In \cite{Kon92}, Kontsevich defined a piecewise $2$-form $\omega_{\mathrm{Kon}}$ on $\MRG_{g,b}$ that computes intersection numbers on moduli space.
Restricting to a slice $\MRG_{g,b}(\bfL)$ with fixed boundary lengths, this form is seen to be symplectic on every maximal facet.
Thus, the Kontsevich form gives rise to volume forms
\[\frac{1}{(3g-3+b)!}
\bigwedge\nolimits^{3g-3+b}
\omega_{\mathrm{Kon}}\]
on each maximal facet, which can be glued together into a volume form on the entire slice $\MRG_{g,b}(\bfL)$. We will use $\eta_{\mathrm{Kon}}^{\bfL}$ to denote the measure associated to this volume form and refer to it as the {\em Kontsevich measure} on $\MRG_{g,b}(\bfL)$.
\footnote{See \cite[Remark 2.1]{spine} for a discussion of how to deal with this measure in the presence of non-trivial automorphism groups.}

When $\vg := (\vg_1,\dots,\vg_k)$ is an ordered, oriented simple closed multi-curve on $S_g$ with complementary subsurfaces $(\Sigma_j)_{j=1}^c$, 
we recall that we set
\[
\MRG(S_g \setminus \agamma; \bfL) :=
\prod_{j=1}^c \MRG_{g_j,b_j} \left(\bfL^{(j)}\right)
\]
for any length vector $\bfL \in \mathbb{R}_{>0}^k$.
As this is a product of moduli spaces with fixed boundary lengths, it has a (product) Kontsevich measure $\smash{\eta_\mathrm{Kon}^{\agamma, \bfL}}$.
Integrating against boundary lengths, the Kontsevich measures on each slice also fit together into a canonical measure on the total space, defined for any measurable subset $A \subseteq \MRG(S_g \setminus \agamma)$ by the formula
\[
\eta_{\mathrm{Kon}}^{\agamma}(A) := 
\int_{\mathbb{R}_{>0}^{k}} 
\eta_\mathrm{Kon}^{\agamma, \bfL}
\left( A \cap \MRG(S_g \setminus \agamma; \bfL) \right)
\,
dL_1 \ldots dL_k .
\]

We now use this to induce a measure on $\MRG(S_g \setminus \agamma;\Delta)$, the total space of the fibration over the standard simplex $\Delta \subset \mathbb{R}^k$. 
For $A \subseteq \mathcal{MRG}(S_g \setminus \agamma; \Delta)$ set
\[
\text{cone}(A) :=
\{ (\Gamma, t\mathbf{x}): (\Gamma, \mathbf{x}) \in A, t \in (0,1]\}
\subseteq \MRG(S_g \setminus \agamma).
\]
where $\Gamma$ is the underlying ribbon graph and $\mathbf{x}$ corresponds to its metric structure. 

Denote by $\rho_{g}(\vg) \in \mathbb{N}$ the number of components of $\vg$ that bound a torus with one boundary component.
Let $\sigma_{g}(\vg) > 0$ be the rational number given by
\[
\sigma_{g}(\vg) := \frac{\prod_{j=1}^c |K_{g_j,b_j}|}{|\text{Stab}_0(\vg)\cap K_{g}|},
\]
where $K_{g_j,b_j} \triangleleft \text{Mod}_{g_j,b_j}$ is the kernel of the mapping class group action on $\mathcal{T}_{g_j,b_j}$ and $K_{g} \triangleleft \text{Mod}_{g}$ is the kernel of the mapping class group action on $\mathcal{T}_{g}$.
These factors arise from special symmetries of moduli spaces of low-complexity surfaces; for a more extended discussion see \cite{Ara19b} and \cite{spine}.

\begin{definition}\label{def:fib_Kont}
The {\em fibered Kontsevich measure} $\mathring{\eta}_\mathrm{Kon}^{\Delta}$ on $\mathcal{MRG}(S_g \setminus \agamma; \Delta)$ is the measure which to every Borel measurable subset $A$ assigns the value
\[
\mathring{\eta}_\mathrm{Kon}^{\Delta}(A) := 
\frac{\sigma_{g}(\vg)}
{ 2^{\rho_{g}(\vg)}}
\int_{\text{cone}(A)} L_1 \cdots L_k \, d \eta_{\mathrm{Kon}}^{\agamma}(\Gamma, \mathbf{x}).
\]
Denote the total $\mathring{\eta}_\mathrm{Kon}^{\Delta}$-mass of $\mathcal{MRG}(S_g \setminus \agamma; \Delta)$ by $m_{\agamma}$.
\end{definition}

\subsection*{Integral points and horoball masses}
Let $\smash{m_{\agamma,h}^L}$ denote the total mass of $\smash{\hnu_{u,\agamma,h}^L}$ (equivalently, of $\smash{\tnu_{u,\agamma,h}^L}$).
Note that, since intersection numbers and square roots of extremal lengths scale homogeneously, equation \eqref{eqn:scaling} implies that
\begin{equation}\label{eqn:horomass_scale}
m_{\agamma,h}^L = L^{6g-6} m_{\agamma,h}^1.
\end{equation}

To compute $\smash{m_{\agamma,h}^1}$ and, in particular, to relate it to the (fibered) Kontsevich measure, we will need a better structural understanding of the unstable leaf $\smash{\Fol^u_{[\gamma]}}$. The following statement gives us the desired control; compare to the discussion of moderately slanted cylinder diagrams in \cite[Section 3]{Ara19a} and to the discussion of shear-shape coordinates for quadratic differentials in \cite{CF}.

\begin{lemma}\label{lem:unstable_bundle}
The critical graph map $\Xi \colon \mathcal{F}^u_{[\gamma]} / \Stab_0(\agamma) \to \MRG(S_g \setminus \vg)$ demonstrates the quotient $\Fol^u_{[\gamma]} / \Stab_0(\agamma)$ as a torus bundle over $\MRG(S_g \setminus \vg)$.
\end{lemma}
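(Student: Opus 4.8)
The plan is to produce explicit local trivializations of $\Xi$ by reconstructing a differential in $\Fol^u_{[\gamma]}$ from its critical graph together with the shearing data of the complementary cylinders. First I would unpack the geometry of a point $q \in \Fol^u_{[\gamma]}$: since $[\Im(q)] = [\gamma]$, Theorem \ref{theo:HM1} identifies $q$ with a rescaling of the Jenkins--Strebel differential of $\gamma$ on $\pi(q)$, so the complement of the critical graph $G := \Xi(q)$ in the underlying flat surface is a disjoint union of horizontal cylinders $C_1, \dots, C_k$, one for each component $\vg_i$ of $\vg$. Each $C_i$ has circumference equal to the length $L_i$ that $G$ assigns to the boundary cycle(s) of $S_g \setminus \vg$ coming from $\vg_i$, together with a height $h_i$; since $\Im(q)$ is projectively $\gamma$ and $q$ has unit area, all the $h_i$ coincide and are determined by $G$. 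Thus $q$ is completely determined by the marked metric ribbon graph $G$ on $S_g \setminus \vg$ together with the twist parameters $\theta_i \in \RR$ of the $C_i$, and conversely gluing cylinders of the prescribed circumferences, common height, and arbitrary twists $(\theta_i) \in \RR^k$ onto the boundary cycles of any marked metric ribbon-graph structure on $S_g \setminus \vg$ yields a point of $\Fol^u_{[\gamma]}$. This gives a bijection
\[
\widetilde{\MRG}(S_g \setminus \vg) \times \RR^k \;\longrightarrow\; \Fol^u_{[\gamma]},
\]
where $\widetilde{\MRG}(S_g \setminus \vg) = \prod_{j} \widetilde{\MRG}_{g_j,b_j}$ is the combinatorial Teichmüller space of the complementary subsurfaces (i.e.\ marked metric ribbon-graph structures), under which $\Xi$ becomes projection to the first factor, up to the self-homeomorphism of $\MRG(S_g \setminus \vg)$ that rescales each graph by the sum of its boundary lengths (this merely reflects the difference between the unit-area and height-one normalizations and is irrelevant to the bundle structure).

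The technical heart of the argument is that this bijection is a homeomorphism; this is precisely the statement that the edge lengths of the critical graph together with the cylinder twists form horizontal period coordinates on $\Fol^u_{[\gamma]}$, compatible with the cell structure on $\MRG(S_g \setminus \vg)$ as the combinatorial type of $G$ degenerates. I would invoke the shear--shape coordinates of \cite{CF} for this (compare the cylinder-diagram coordinates of \cite[Section 3]{Ara19a}) rather than reprove it. Surjectivity and injectivity of the bijection are elementary from the Jenkins--Strebel description; the one genuinely nontrivial input is continuity of the inverse, i.e.\ that the critical graph and its twists can be read off continuously from $q$ even as the combinatorial type jumps, and this is exactly what that technology supplies. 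I expect this to be the main obstacle.

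It then remains to quotient by $\Stab_0(\vg)$ and extract the bundle. The subgroup generated by the Dehn twists $T_{\vg_i}$ acts on $\widetilde{\MRG}(S_g \setminus \vg) \times \RR^k$ trivially on the first factor and by $\theta_i \mapsto \theta_i + L_i$ on the second, while the cutting homomorphism identifies $\Stab_0(\vg) / \langle T_{\vg_1}, \dots, T_{\vg_k}\rangle$ with the product $\prod_j \mathrm{Mod}(\Sigma_j)$, which acts on $\widetilde{\MRG}(S_g \setminus \vg) = \prod_j \widetilde{\MRG}_{g_j,b_j}$ with quotient $\prod_j \MRG_{g_j,b_j} = \MRG(S_g \setminus \vg)$. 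Projecting $\Stab_0(\vg)$-equivariantly onto the first factor therefore realizes $\Fol^u_{[\gamma]}/\Stab_0(\vg)$ as a space over $\MRG(S_g \setminus \vg)$ whose fiber over a graph with boundary lengths $(L_1, \dots, L_k)$ is $\RR^k / (L_1\ZZ \times \cdots \times L_k\ZZ) \cong \prod_{i=1}^{k} \RR / L_i \ZZ$, a $k$-torus, with the projection equal to $\Xi$; rescaling the $i$-th circle factor by $1/L_i$ over a small chart in $\MRG(S_g \setminus \vg)$ gives a local trivialization with fiber $(\RR/\ZZ)^k$. Finally, on the locus where $G$ has extra symmetries the fiber is only a finite quotient of $(\RR/\ZZ)^k$ — most saliently, a component of $\vg$ bounding a one-holed torus picks up the elliptic involution of that torus, which halves the corresponding twist circle and accounts for the factor $2^{-\rho_g(\vg)}$ appearing later — and this is handled exactly as the automorphism subtleties of \cite[Remark 2.1]{spine}.
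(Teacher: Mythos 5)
Your argument follows the paper's own cut-and-glue strategy --- recover the critical graph by cutting along the cylinder core curves, then reconstruct $q$ from the graph up to a torus's worth of shearing --- but carries it out at the level of combinatorial Teichm\"uller spaces and spells out the $\Stab_0(\vg)$-action, the descent, and the continuity input (shear--shape coordinates of \cite{CF}, also flagged by the paper), all of which the paper leaves implicit. One small imprecision: $\widetilde{\MRG}(S_g \setminus \vg)$ cannot literally be the unrestricted product $\prod_j \widetilde{\MRG}_{g_j,b_j}$ (the gluing requires the two boundary cycles identified across each $\vg_i$ to have equal length, and the unrestricted product is $k$ dimensions too large to match $\dim \Fol^u_{[\gamma]} = 6g-6$); once you replace it with the codimension-$k$ subspace of matching boundary lengths, your bijection, the dimension count, and the descent to a torus (orbi-)bundle over $\MRG(S_g\setminus\vg)$ all go through as written.
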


\begin{proof}
By definition, every $q \in \smash{\Fol^u_{[\gamma]}}$ is a unit-area quadratic differential whose horizontal foliation (i.e., imaginary part) is in the projective class of $[\gamma]$. In particular, this implies that its horizontal cylinders all have equal heights.
Cutting along the core curves of these cylinder, we are left with a flat cone structure with totally geodesic boundary on $S_g \setminus \agamma$.
Collapsing the vertical leaves then defines a deformation retract onto the critical graph $\Xi(q) \in \MRG(S_g \setminus \vg)$.

Conversely, given a tuple of metric ribbon graphs $(\Gamma,\bfx) \in \MRG(S_g \setminus \vg)$, there exists a unique choice of height
so that gluing together these metric ribbon graphs along cylinders of that height results in a unit area quadratic differential with the given horizontal separatrices; if $(\Gamma,\bfx) \in \MRG(S_g \setminus \vg; \bfL)$, then the corresponding height is $\smash{1/\sum_{i=1}^k L_i}$.
Compare to Figure \ref{fig:JSglue}.
The only ambiguity in this construction arises in choosing how much to shear along the cylinders of $\gamma$.
Thus, for each $\bfx \in \MRG(S_g \setminus \vg)$, there is a torus's worth of ways to construct a quadratic differential $q \in \mathcal{F}^u_{[\gamma]}$ with critical graph $\Xi(q) = (\Gamma,\bfx)$.
\end{proof}

\begin{figure}[ht]
    \centering
    \includegraphics[scale=.8]{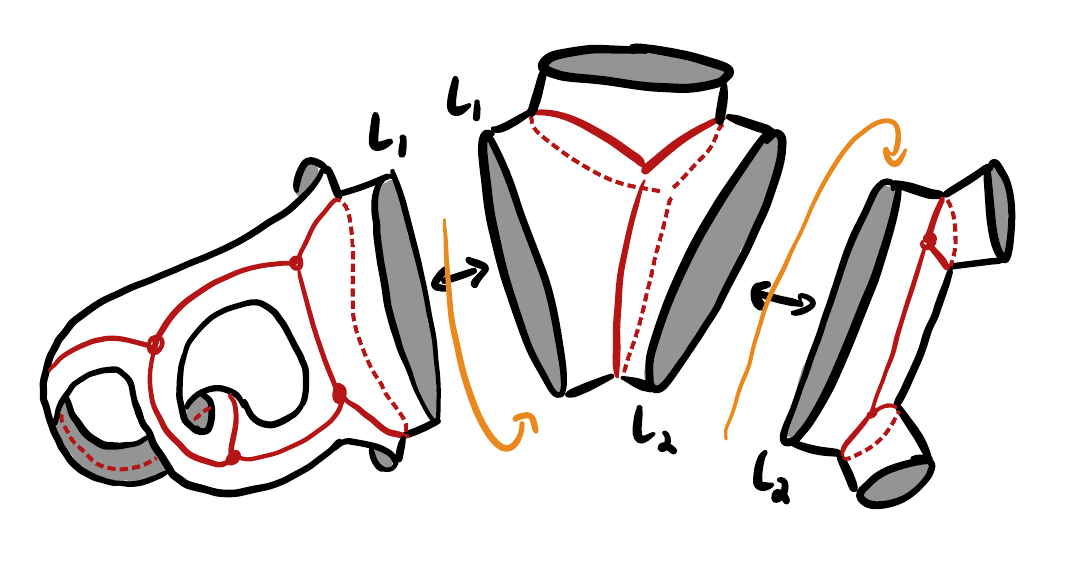}
    \caption{Thickening a ribbon graph and gluing boundaries to recover a Jenkins--Strebel differential with specified critical graph.}
    \label{fig:JSglue}
\end{figure}

The fibers of the critical graph map $\Xi \colon \mathcal{F}^u_{[\gamma]} / \Stab_0(\agamma) \to \MRG(S_g \setminus \vg)$ are tori of real dimension $k$ equal to the number of components of $\vg$. Each dimension represents twisting about one of the components.
Because of this, the fibers are naturally equipped with a notion of size coming from the circumferences of the corresponding cylinders (representing the possible amounts of twisting, plus a correction factor for extra symmetries). See also the discussion of the ``cut-and-glue fibration'' in \cite[Section 11]{spine}. 

This discussion allows us to express the pushforward of Thurston measure by $\Xi$ in terms of the Kontsevich measure on the base of the fibration.

\begin{proposition}\label{prop:pushTh_fibKon}
The following identity of measures on $\MRG(S_g \setminus \vg)$ holds:
\begin{equation}\label{eqn:pushTh_fibKon}
d \left( \Xi_* \widetilde{\nu}_{u,\vg} \right) = 
\frac{\sigma_g(\vg)}{2^{\rho_g(\vg)}}
L_1 \cdots L_k  \thinspace d\eta_{\mathrm{Kon}}^{\vg, \bfL} \, d L_1 \ldots dL_k.
\end{equation}
That is, for every measurable subset $A \subseteq \MRG(S_g \setminus \vg)$,
\[
\widetilde{\nu}_{u,\vg}(\Xi^{-1}A) = \frac{\sigma_g(\vg)}{2^{\rho_g(\vg)}} \int_{\mathbb{R}^k_{>0}} \eta_{\mathrm{Kon}}^{\vg, \bfL}(A \cap \mathcal{MRG}(S_g \setminus \vg;\mathbf{L})) \thinspace L_1\cdots L_k \thinspace dL_1\cdots dL_k.
\]

\end{proposition}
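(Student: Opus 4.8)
The plan is to verify \eqref{eqn:pushTh_fibKon} by unwinding the identification in Lemma \ref{lem:unstable_bundle} and computing how the Thurston measure decomposes along the torus-bundle structure. Concretely, I would first fix a slice: on the sublocus where the critical graph lies in $\MRG(S_g \setminus \vg; \bfL)$ for a fixed length vector $\bfL$, the height of the Jenkins--Strebel differential is pinned to $1/\sum_i L_i$ by Lemma \ref{lem:unstable_bundle}, so the differentials in $\Fol^u_{[\gamma]}$ over that slice are parametrized by the base point $(\Gamma, \bfx) \in \MRG(S_g \setminus \vg; \bfL)$ together with $k$ real shear parameters $(t_1, \ldots, t_k)$, one per component of $\vg$, where $t_j$ ranges over $\RR / L_j \ZZ$ (the circumference of the $j$-th cylinder), up to the correction factor $\sigma_g(\vg) / 2^{\rho_g(\vg)}$ coming from the symmetries recorded before Definition \ref{def:fib_Kont}. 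So the total space $\Fol^u_{[\gamma]} / \Stab_0(\vg)$ is coordinatized, away from measure zero, by $(\bfL, \bfx, t_1, \ldots, t_k)$.

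Next I would identify what the Thurston measure becomes in these coordinates. Recall $\nu_{u,\vg}$ is by construction the pushforward of $\mu_\mathrm{Thu}$ on $\MF_g(\gamma)$ under the Hubbard--Masur identification $\Fol^u_{[\gamma]} \cong \MF_g(\gamma)$, and the Thurston measure is the Lebesgue-class measure attached to the integral piecewise-linear (train-track) structure on $\MF_g$. The key point is that the coordinates $(\bfL, \bfx, \mathbf t)$ are themselves (piecewise) integral-linearly related to train-track coordinates adapted to $\vg$: the $L_i$ are the intersection numbers $i(\beta, \vg_i)$, the $\bfx$ record the transverse measures of the arcs in the complementary subsurfaces, and the $t_i$ record the twisting. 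Under such a change of coordinates the Thurston measure is a constant (rational, accounting for the index of the lattice) multiple of $dL_1 \cdots dL_k \, d(\text{Kontsevich volume on the }\bfx) \, dt_1 \cdots dt_k$. The factor $L_1 \cdots L_k$ in \eqref{eqn:pushTh_fibKon} appears precisely because, after pushing forward by $\Xi$, one integrates out the torus fiber, whose total volume in the $\mathbf t$ variables is $L_1 \cdots L_k$ (the product of the cylinder circumferences), and the remaining normalization constant is exactly $\sigma_g(\vg)/2^{\rho_g(\vg)}$ by the same bookkeeping of kernels and hyperelliptic-type symmetries that defines $\sigma_g(\vg)$, $\rho_g(\vg)$.

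So the steps in order are: (1) invoke Lemma \ref{lem:unstable_bundle} to get the explicit parametrization of $\Fol^u_{[\gamma]}$ by base $\times$ torus fiber, pinning the cylinder height on each $\bfL$-slice; (2) compare the coordinates $(\bfL, \bfx, \mathbf t)$ to a train-track chart on $\MF_g$ adapted to the multi-curve $\vg$, concluding that $\mu_\mathrm{Thu}$ pulls back to a constant multiple of the product $dL_1 \cdots dL_k$, the Kontsevich volume in $\bfx$, and $dt_1 \cdots dt_k$ — here is where one must also check that the Kontsevich symplectic volume on $\MRG(S_g \setminus \vg; \bfL)$ is the correct integral-linear volume, which follows from Kontsevich's description of $\omega_\mathrm{Kon}$ and matches the convention in \cite[Section 7]{spine}; (3) push forward by $\Xi$, integrating out the fiber to produce the factor $L_1 \cdots L_k$; (4) pin down the overall constant as $\sigma_g(\vg)/2^{\rho_g(\vg)}$ by tracking the lattice index and the symmetry corrections, citing \cite{Ara19b} and \cite[Sections 2, 7, 11]{spine}. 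I expect step (2) — matching the Thurston/train-track measure with the product of Lebesgue in the $\bfL, \mathbf t$ directions and the Kontsevich measure in the $\bfx$ direction, with exactly the right rational constant — to be the main obstacle, since it requires carefully choosing a train track carrying all foliations in $\MF_g(\gamma)$, decomposing its weight space as (cylinder-intersection weights) $\oplus$ (complementary-subsurface arc weights) $\oplus$ (twist weights), and verifying that this decomposition is compatible with both the integral structures in play; this is essentially the content of the ``cut-and-glue fibration'' discussion of \cite[Section 11]{spine}, which I would lean on heavily.
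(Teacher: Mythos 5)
Your proposal is correct in outline, but it takes a genuinely different technical route from the paper. You set up explicit coordinates $(\bfL, \bfx, \mathbf t)$ on $\Fol^u_{[\gamma]}/\Stab_0(\vg)$ using the bundle structure from Lemma~\ref{lem:unstable_bundle}, then argue that the Thurston measure pulls back to (a constant multiple of) the Lebesgue product $dL_1\cdots dL_k\,d\eta_{\mathrm{Kon}}^{\vg,\bfL}\,dt_1\cdots dt_k$ by a train-track coordinate change, and finally integrate out the $\mathbf t$-fiber to produce the factor $L_1\cdots L_k$. The paper instead runs a lattice-point argument: it characterizes $\nu_{u,\vg}$ as a weak-$\star$ limit of counting measures of integer points (square-tiled surfaces), observes that $\Xi$ sends integer points to integer points (integral metric ribbon graphs), and notes that over each integral ribbon graph with boundary lengths $\bfL$ there are exactly $L_1\cdots L_k$ square-tiled surfaces in the preimage, coming from integral twists. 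The Kontsevich measure is then matched to its own lattice-point interpretation via Norbury's observation. The advantage of the paper's approach is that it sidesteps the normalization bookkeeping you correctly flag as the main obstacle in your step (2) --- one never has to exhibit a specific train-track chart and compute a Jacobian, because both measures are pinned down by having the \emph{same} integer lattice, and the fiberwise factor $L_1\cdots L_k$ appears as a literal count of lattice points in the torus fiber rather than its Lebesgue volume. Your approach would work, and is arguably more transparent about \emph{why} the measures match, but it would require you to actually carry out the train-track decomposition you sketch, essentially re-deriving a piece of the cut-and-glue fibration structure from \cite[Section 11]{spine}; the lattice-point route gets the constant for free once the correspondence of integer points is established.
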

\begin{proof}
There are natural notions of integer points for both $\smash{\Fol^u_{[\gamma]} / \Stab_0(\vg)}$ and $\MRG(S \setminus \vg)$. In the first space, these are square-tiled surfaces. In the second space, these are integral metric ribbon graphs.
The measure $\mu_{\Th}$ can be defined as a weak-$\star$ limit of counting measures of integrally weighted simple closed multi-curves, and the corresponding measure $\nu_{u,\agamma}$ can hence be interpreted as a weak-$\star$ limit of counting measures of square-tiled surfaces.

Observe that the critical graph map $\Xi \colon \smash{\mathcal{F}^u_{[\gamma]} / \Stab_0(\agamma)} \to \MRG(S_g \setminus \vg)$ takes integer points to integer points.
Furthermore, over any integral metric ribbon graph in $(\Gamma,\bfx) \in \MRG(S \setminus \vg; \bfL)$, there are exactly $L_1 \cdots L_k$ square-tiled surfaces in $\Xi^{-1}(\Gamma,\bfx)$ (corresponding to integral amounts of twisting).
Thus, up to getting the correct normalization factor, it suffices to show that we can reinterpret the right-hand side of \eqref{eqn:pushTh_fibKon} in terms of counting integer points.
This statement was first observed by Norbury \cite{Norbury}, but is just a consequence of the fact that the Kontsevich measure is essentially Lebesgue measure in the lengths of edges. Compare with the discussion on the fibered Kontsevich measure in \cite{spine}.

%In particular, we have
%\begin{equation}\label{eqn:Konts=Euclid}
%d \eta_{\mathrm{Kon}}^{\bfL} \wedge \bigwedge_{i=1}^b d L_i = 2^{2g-2+b} \bigwedge_{j=1}^n d x_j
%\end{equation}
%as measures on $\MRG_{g,b}$.
%Restricting to subspace where boundary lengths are equal (which is cut out by integral linear equations), Lebesgue measure can also be defined by integral point countings. Be careful about normalizations.

To get the normalizing constant, we must be careful to count integer points weighted by the size of their automorphism group. Equivalently, we must ensure that the pushforwards of measures to orbifolds are weighted by their symmetries.
The symmetries of low-complexity moduli spaces (i.e., the kernel of the mapping class group action on these Teichm{\"u}ller spaces) hence contribute a factor of $\sigma_g(\vg)$.
The $2^{-\rho_g(\vg)}$ factor comes from the elliptic involution on $S_{1,1}$: its existence implies that there is only half as much twisting in the toral fibers as one might expect.
\end{proof}

Together with \eqref{eqn:horomass_scale} and the definition of the fibered Kontsevich measure (Definition \ref{def:fib_Kont}), Proposition \ref{prop:pushTh_fibKon} immediately implies the following:

\begin{corollary}\label{cor:horomass}
For any $L>0$ and any non-zero, continuous, compactly supported function
$h: \MRG(S_g \setminus \vg; \Delta) \to \RR$, the total mass $m_{\agamma,h}^L$ of $\widehat{\nu}_{u,\agamma,h}^L$ is equal to
\[m_{\agamma,h}^L = 
{L^{6g-6}} \thinspace
\int_{\mathcal{MRG}(S_g \setminus \agamma;\Delta)} h(\mrg) \thinspace d\mathring{\eta}_{\mathrm{Kon}}^{\Delta}(\mrg).\]
\end{corollary}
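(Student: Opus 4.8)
The plan is to peel back the definitions, push everything forward under the critical graph map $\Xi$, and then recognize the result by combining Proposition~\ref{prop:pushTh_fibKon} with Definition~\ref{def:fib_Kont}; the only real content is a bookkeeping step that matches up the two relevant normalizations of the critical graph.

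First I would reduce to $L=1$ via the scaling relation \eqref{eqn:horomass_scale}. Unwinding \eqref{eq:Xihoro_meas_lift} and using that $\nu_{u,\gamma}$ is supported on the unstable leaf $\Fol^u_{[\gamma]}$, one has
\[
m_{\agamma,h}^1 = \int_{\Fol^u_{[\gamma]}/\Stab_0(\vg)} \mathbbm{1}_{[0,1]}\!\left(\sqrt{\Ext}_{\pi(q)}(\gamma)\right) h\!\left(\Xi^1(\pi(q),\vg)\right) d\tnu_{u,\gamma}(q).
\]
The key point is that, by Lemma~\ref{lem:unstable_bundle}, if $q \in \Fol^u_{[\gamma]}$ has critical graph $\Xi(q) = (\Gamma,\bfx) \in \MRG(S_g\setminus\vg;\bfL)$, then $q$ is obtained by gluing $(\Gamma,\bfx)$ along cylinders of common height $1/(L_1+\dots+L_k)$; since $q$ has unit area, a short computation gives $\sqrt{\Ext}_{\pi(q)}(\gamma) = L_1+\dots+L_k =: |\bfL|$, and because $\JS(\pi(q),\gamma) = |\bfL|\, q$ while $\Xi^1$ rescales by $1/\Ext$ (not $1/\sqrt{\Ext}$), one finds $\Xi^1(\pi(q),\vg) = (\Gamma,\bfx)/|\bfL| \in \MRG(S_g\setminus\vg;\Delta)$. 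Hence the integrand above is constant on the torus fibers of $\Xi$, so it descends and
\[
m_{\agamma,h}^1 = \int_{\MRG(S_g\setminus\vg)} \mathbbm{1}_{[0,1]}(|\bfL|)\, h\!\left((\Gamma,\bfx)/|\bfL|\right) d\!\left(\Xi_*\tnu_{u,\gamma}\right)(\Gamma,\bfx).
\]

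Next I would substitute the formula for $\Xi_*\tnu_{u,\gamma}$ from Proposition~\ref{prop:pushTh_fibKon} and use that $d\eta_{\mathrm{Kon}}^{\vg} = d\eta_{\mathrm{Kon}}^{\vg,\bfL}\,dL_1\cdots dL_k$. Since the region $\{|\bfL|\le 1\}$ is exactly $\text{cone}(\MRG(S_g\setminus\vg;\Delta))$ and $h((\Gamma,\bfx)/|\bfL|)$ is the constant value of $h$ along the corresponding cone ray, this gives
\[
m_{\agamma,h}^1 = \frac{\sigma_g(\vg)}{2^{\rho_g(\vg)}} \int_{\text{cone}(\MRG(S_g\setminus\vg;\Delta))} h\!\left((\Gamma,\bfx)/|\bfL|\right) L_1\cdots L_k\, d\eta_{\mathrm{Kon}}^{\vg}(\Gamma,\bfx),
\]
which is precisely $\int_{\MRG(S_g\setminus\vg;\Delta)} h(\mrg)\, d\mathring{\eta}_{\mathrm{Kon}}^{\Delta}(\mrg)$ by Definition~\ref{def:fib_Kont}, applied with $A = \MRG(S_g\setminus\vg;\Delta)$ (whose cone is the whole region). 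Combining with \eqref{eqn:horomass_scale} finishes the proof.

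The main obstacle is the middle step: one must carefully verify that along the unstable leaf both $\sqrt{\Ext}_{\pi(q)}(\gamma)$ and $\Xi^1(\pi(q),\vg)$ are functions of the critical graph $\Xi(q)$ alone — the subtlety being that the underlying surface $\pi(q)$ genuinely varies over each torus fiber of $\Xi$, even though the cylinder circumferences (hence $|\bfL|$) and the intersection number $i(\Re(q),\gamma)$ do not — and one must keep straight the difference between the unit-area normalization used to build the torus bundle in Lemma~\ref{lem:unstable_bundle} and the unit-extremal-length normalization built into $\Xi^1$. Once this dictionary is set up, the corollary is a formal rearrangement of the two preceding results.
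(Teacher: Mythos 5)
Your proof is correct and follows exactly the route the paper intends: the paper leaves the corollary as an ``immediate'' consequence of \eqref{eqn:horomass_scale}, Definition~\ref{def:fib_Kont}, and Proposition~\ref{prop:pushTh_fibKon}, and you have supplied precisely the missing bookkeeping.

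One small slip worth flagging: as quadratic differentials, $\JS(\pi(q),\gamma) = |\bfL|^2\,q$ rather than $|\bfL|\,q$, since scaling a quadratic differential by a factor $r$ scales its horizontal (and vertical) transverse measures by $\sqrt r$, so matching $\Im(q)=\gamma/|\bfL|$ to $\Im(\JS(\pi(q),\gamma))=\gamma$ forces the factor $|\bfL|^2$. You appear to be speaking of the induced flat metric/critical graph, whose edge lengths indeed scale by $|\bfL|$; either way, the conclusion $\Xi^1(\pi(q),\vg)=(\Gamma,\bfx)/|\bfL|$ is correct, and the rest of the argument (descending along the torus fibers, substituting Proposition~\ref{prop:pushTh_fibKon}, identifying the cone and applying Definition~\ref{def:fib_Kont}) goes through as you wrote it.
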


\subsection*{Equidistribution of critical-JS-horoballs}

We now show that the expanding pushforwards of critical-JS-horoballs equidistribute with respect to the Masur-Veech measure; this is the analogue of \cite[Theorem 5.2]{spine}.
As opposed to that paper, here we will be able to invoke strong results from Teichm{\"u}ller dynamics to deduce equidistribution relatively quickly; the approach presented here is one of several possible ones.
We first consider the horoball measures on unstable leaves.

\begin{proposition}\label{prop:unstable_horoball_equid}
For any non-zero, continuous, compactly supported function
$h: \MRG(S_g \setminus \vg; \Delta) \to \RR$, the following convergence holds with respect to the weak-$\star$ topology for measures on $\mathcal{Q}^1\mathcal{M}_g$:
    \[
    \lim_{L \to \infty} \frac{\widehat{\nu}_{u,\agamma,h}^L}{m_{\agamma,h}^L} = \frac{\widehat{\nu}_\MV}{b_g}.
    \]
\end{proposition}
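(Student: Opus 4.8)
The plan is to deduce the statement from the equidistribution theorem of Forni \cite{push} for expanding geodesic pushforwards of unstable-leaf measures, which itself rests on the measure classification of Eskin--Mirzakhani \cite{EM} and Eskin--Mirzakhani--Mohammadi \cite{EMM}. The first and main reduction is to recognize that the whole family $\{\hnu_{u,\agamma,h}^L\}_{L>0}$ is, up to explicit scalars, a single Teichm\"uller geodesic flow orbit. Writing $L=e^t$: along the geodesic flow inside the leaf $\Fol^u_{[\gamma]}$ the unrescaled critical graph $\Xi(X,\agamma)$ and $\Ext_X(\gamma)$ both scale by $e^{2t}$, so their ratio $\Xi^1(X,\agamma)$ is constant along $g_t$; meanwhile $g_t$ carries $\{\sqrt{\Ext}_X(\gamma)\le 1\}$ onto $\{\sqrt{\Ext}_X(\gamma)\le e^t\}$ and transforms $\nu_{u,\gamma}$ by $e^{-(6g-6)t}$ by \eqref{eqn:scaling}. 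Using the $\Stab_0(\agamma)$- and $\mcg$-equivariance of the relevant maps, these facts combine to give $\hnu_{u,\agamma,h}^{e^t}=e^{(6g-6)t}(g_t)_*\hnu_{u,\agamma,h}^1$; with the mass scaling \eqref{eqn:horomass_scale} this yields
\[
\frac{\hnu_{u,\agamma,h}^L}{m_{\agamma,h}^L}=(g_{\log L})_*\mu_0,\qquad \mu_0:=\frac{\hnu_{u,\agamma,h}^1}{m_{\agamma,h}^1}.
\]
We may assume $h\ge 0$---the general case follows by writing $h=h_+-h_-$ and using linearity---so $\mu_0$ is a probability measure on $\QoM$, and it suffices to prove $(g_t)_*\mu_0\to \hnu_\MV/b_g$ weak-$\star$ as $t\to\infty$.

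By construction $\mu_0$ is absolutely continuous with respect to the conditional measure $\nu_{u,\gamma}$ of $\hnu_\MV$ along the unstable leaf $\Fol^u_{[\gamma]}$, with bounded density proportional to $h(\Xi^1(\cdot,\agamma))\,\mathbbm{1}_{[0,1]}(\sqrt{\Ext}(\cdot))$. This is exactly the class of measures to which Forni's theorem applies, with one caveat: $\mu_0$ is \emph{not} compactly supported in $\QoM$, because as $\sqrt{\Ext}_X(\gamma)\to 0$ with $\Xi^1(X,\agamma)$ ranging over the compact support of $h$, every saddle connection of the corresponding differential (both the critical edges and the core curves of $\agamma$) shrinks to zero and the differential escapes to the boundary. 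To handle this we truncate: fix $a\in(0,1)$ and write $\mu_0=\mu_0^{[a,1]}+\mu_0^{[0,a]}$ according to whether $\sqrt{\Ext}_X(\gamma)\ge a$. Using Lemma \ref{lem:unstable_bundle} (or a direct look at the flat geometry) one checks that $\mu_0^{[a,1]}$ is compactly supported in $\QoM$: if $\sqrt{\Ext}_X(\gamma)\in[a,1]$ and $\Xi^1(X,\agamma)$ lies in a fixed compact set, then the heights and circumferences of the cylinders of $\agamma$ and the lengths of all critical edges are bounded above and below, so the differential stays in a compact subset of $\QoM$ (the relevant fundamental domain for $\Stab_0(\agamma)$ having compact closure, exactly as in the proof that $\tmu_\Th^L$ is finite). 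On the other hand $\|\mu_0^{[0,a]}\|_{\mathrm{TV}}=a^{6g-6}$ by \eqref{eqn:horomass_scale}, and since $(g_t)_*$ preserves total variation this persists, uniformly in $t$, for $(g_t)_*\mu_0^{[0,a]}$.

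Now Forni's theorem \cite{push} applies to $\mu_0^{[a,1]}$ (compactly supported and absolutely continuous with respect to the unstable conditional; one can first replace the boundary indicator by continuous approximants and squeeze): $(g_t)_*\mu_0^{[a,1]}$, renormalized to total mass one, converges weak-$\star$ to $\hnu_\MV/b_g$. Hence for every $\varphi\in C_c(\QoM)$,
\[
\limsup_{t\to\infty}\left|\int \varphi\,d(g_t)_*\mu_0 \;-\;(1-a^{6g-6})\int \varphi\,\frac{d\hnu_\MV}{b_g}\right|\;\le\; a^{6g-6}\,\|\varphi\|_\infty,
\]
and letting $a\to 0$ gives $(g_t)_*\mu_0\to \hnu_\MV/b_g$, as desired. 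I expect the only genuinely substantive ingredient to be Forni's theorem itself---a deep consequence of \cite{EM,EMM}---so that the work particular to this proposition is the bookkeeping of the scaling relations in the first step and, as the main technical nuisance, verifying that the truncated horoball $\mu_0^{[a,1]}$ has compact support in $\QoM$ (a statement about the flat geometry of near-degenerate Jenkins--Strebel differentials) together with the uniform smallness of $(g_t)_*\mu_0^{[0,a]}$.
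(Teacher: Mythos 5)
Your proposal is correct and follows essentially the same route as the paper, which deduces the result from \cite[Theorem~1.6]{push} and notes that horoballs are obtained from Forni's horospheres ``by integrating along the direction of the Teichm\"uller geodesic flow.'' Your scaling identity $\hnu_{u,\agamma,h}^L/m_{\agamma,h}^L = (g_{\log L})_*\mu_0$ and the truncation at $\sqrt{\Ext}\ge a$ (with the mass estimate $a^{6g-6}$ and the compactness check for the truncated horoball) are exactly the details that make that one-line integration remark rigorous.
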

\begin{proof}
This result follows directly from \cite[Theorem 1.6]{push}.
\footnote{An alternative proof of the desired equidistribution statement can be obtained using the mixing property of the Teichmüller geodesic flow; see for instance \cite[Proposition 3.2]{EMM19}. A proof using only the ergodicity of the Teichmüller horocycle flow should also follow from the methods discussed in \cite{Ara19b}.}
In fact, the cited theorem is stronger in the sense that it guarantees equidistribution of ``horospheres.'' The desired result for horoballs can be recovered by integrating horospheres along the direction of the Teichmüller geodesic flow.
\end{proof}

We now use Proposition \ref{prop:unstable_horoball_equid} to deduce the equidistribution of critical-JS-horoballs.
Unlike in the hyperbolic setting, see Theorem 5.2 and Corollary 5.3 in \cite{spine}, we cannot simply push Proposition \ref{prop:unstable_horoball_equid} down to $\M_g$ to arrive at the following result; this is related to the fact that $\nu_{\MV}$ is $\Mod_g$-invariant while $\nu_{u,\agamma}$ is not.

\begin{corollary}
\label{cor:MV_horoball_equid}
For any non-zero, continuous, compactly supported function
$h: \MRG(S_g \setminus \vg; \Delta) \to \RR$, the following convergence holds with respect to the weak-$\star$ topology for measures on $\mathcal{M}_g$:
\[
\lim_{L \to \infty} \frac{\widehat{\bfm}_{\agamma,h}^L}{m_{\agamma,h}^L} = \frac{\Lambda_g}{b_g} \thinspace \widehat{\bfm}.
\]
\end{corollary}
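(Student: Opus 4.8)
The plan is to deduce the equidistribution of the critical-JS-horoballs $\widehat{\bfm}_{\agamma,h}^L$ on $\M_g$ from the equidistribution of the unstable horoballs $\widehat{\nu}_{u,\agamma,h}^L$ on $\QoM$ established in Proposition \ref{prop:unstable_horoball_equid}, by pushing forward under the forgetful map $\pi\colon \QoM \to \M_g$ and correcting by the Hubbard--Masur function. The key point is that $\widehat{\bfm}_{\agamma,h}^L$ is \emph{not} the $\pi$-pushforward of $\widehat{\nu}_{u,\agamma,h}^L$: the former is a restriction of the Masur--Veech measure $\widehat{\bfm}$ on $\M_g$, while the latter is built from the unstable conditional measure $\nu_{u,\agamma}$ on the leaf $\Fol^u_{[\gamma]}$, and by the very definition of the Radon--Nikodym derivative $\lambda^+$ these differ pointwise on the leaf. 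Concretely, testing against a continuous compactly supported $\varphi \colon \M_g \to \RR$, I would write
\[
\int_{\M_g} \varphi \, d\widehat{\bfm}_{\agamma,h}^L
=\int_{\QoM} \varphi(\pi(q)) \, \lambda^+(q)\, d\widehat{\nu}_{u,\agamma,h}^L(q),
\]
using that on the leaf $\Fol^u_{[\gamma]}$ the pushforward of $\nu_{u,\gamma}$ under $\pi_{[\gamma]}$ has density $\lambda^+$ against $\bfm$, and that the cutoff $\mathbbm{1}_{[0,L]}(\sqrt{\Ext}_X(\gamma))$ and weight $h(\Xi^1(X,\vg))$ depend only on the base point / leaf data so they transfer verbatim.

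The next step is to apply Proposition \ref{prop:unstable_horoball_equid}. The function $q \mapsto \varphi(\pi(q))\lambda^+(q)$ is $\Mod_g$-invariant (both $\varphi\circ\pi$ and $\lambda^+$ are, as noted after the definition of $\Lambda$), continuous $\nu_{\MV}$-a.e., and --- here one must be slightly careful --- not necessarily compactly supported or bounded on $\QoM$, since $\lambda^+$ can blow up. I would handle this by an approximation argument: Proposition \ref{prop:unstable_horoball_equid} gives weak-$\star$ convergence against compactly supported continuous functions, and one upgrades to the integrable function $\varphi\cdot\lambda^+$ by a standard truncation together with a uniform tightness estimate on the horoball measures $\widehat{\nu}_{u,\agamma,h}^L/m_{\agamma,h}^L$ near the cusp of $\QoM$ (this is exactly the role the integrability of $\lambda^+$, equivalently the finiteness of the Hubbard--Masur function, plays). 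Granting this, dividing by $m_{\agamma,h}^L$ and letting $L\to\infty$ yields
\[
\lim_{L\to\infty}\frac{1}{m_{\agamma,h}^L}\int_{\M_g}\varphi\, d\widehat{\bfm}_{\agamma,h}^L
=\frac{1}{b_g}\int_{\QoM}\varphi(\pi(q))\,\lambda^+(q)\, d\widehat{\nu}_{\MV}(q).
\]

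Finally I would identify the right-hand side with $\tfrac{\Lambda_g}{b_g}\int_{\M_g}\varphi\, d\widehat{\bfm}$. Using the disintegration \eqref{eqn:MV_sphere_disint} of $\nu_{\MV}$ over $\bfm$ with fiberwise measure $s_X$ on $S(X)$, we get
\[
\int_{\QoM}\varphi(\pi(q))\,\lambda^+(q)\, d\widehat{\nu}_{\MV}(q)
=\int_{\M_g}\varphi(X)\left(\int_{S(X)}\lambda^+(q)\, ds_X(q)\right) d\widehat{\bfm}(X)
=\int_{\M_g}\varphi(X)\,\Lambda(X)\, d\widehat{\bfm}(X),
\]
where the inner integral is precisely the Hubbard--Masur function $\Lambda(X)$, which by Theorem \ref{thm:HMconst} equals the constant $\Lambda_g$; pulling it out of the integral gives the claim. (One must briefly justify that the disintegration, stated on $\QoT$ and $\T_g$, descends to the quotients $\QoM$ and $\M_g$ --- this is the content of the footnote after \eqref{eqn:MV_sphere_disint}, since $\Mod_g$-invariance of all the objects involved makes the pushforwards compatible.)

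\textbf{Main obstacle.} The delicate point is not the formal computation but the analytic step of passing Proposition \ref{prop:unstable_horoball_equid} through the \emph{unbounded}, non-compactly-supported weight $\lambda^+$: one needs a no-escape-of-mass / uniform integrability statement for the normalized horoball measures $\widehat{\nu}_{u,\agamma,h}^L/m_{\agamma,h}^L$ against $\lambda^+$, uniformly in $L$. I expect this to follow from the finiteness of $\Lambda_g$ together with standard tightness properties of the relevant measures (e.g. the fact that $\lambda^+$ is integrable against $\widehat{\nu}_{\MV}$ and the uniform control on the horoballs coming from Corollary \ref{cor:horomass}), but it is the step that requires genuine care rather than bookkeeping.
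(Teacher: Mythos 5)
Your proposal matches the paper's proof almost line-for-line: write $\widehat{\bfm}_{\agamma,h}^L$ against a test function $\varphi$, transfer to $\QoM$ by inserting the Radon--Nikodym density $\lambda^+$ along the unstable leaf, apply Proposition \ref{prop:unstable_horoball_equid}, then disintegrate $\widehat{\nu}_{\MV}$ via \eqref{eqn:MV_sphere_disint} and use the constancy of the Hubbard--Masur function (Theorem \ref{thm:HMconst}) to pull $\Lambda_g$ out of the integral. The paper performs the transfer in two steps (first to $\T_g/\Stab_0(\vg)$, then to $\QoT/\Stab_0(\vg)$, then pushes to $\QoM$), but these are the same moves you describe.

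One small correction and one comment on your flagged obstacle. First, $q \mapsto \varphi(\pi(q))\lambda^+(q)$ \emph{is} compactly supported on $\QoM$: the forgetful map $\pi\colon\QoM\to\M_g$ is proper (the fiber over $X$ is the compact sphere $S(X)$, possibly modulo a finite automorphism group), so $\pi^{-1}(\operatorname{supp}\varphi)$ is compact. The only genuine concern is therefore whether $\lambda^+$ is continuous (equivalently, bounded) on that compact set, since near the locus of non-simple zeros it is not obvious that $\lambda^+$ stays bounded; what is guaranteed {\em a priori} is only integrability of $\lambda^+$ on each fiber (Theorem \ref{thm:HMconst}). Second, the paper's proof also applies Proposition \ref{prop:unstable_horoball_equid} directly to this weight without comment, so your instinct that ``this step requires genuine care rather than bookkeeping'' is correct relative to what is written, and the truncation/tightness remedy you sketch (with uniform control coming from Corollary \ref{cor:horomass} and integrability of $\lambda^+$) is the natural way to close it.
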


\begin{proof}
Let $f \colon \mathcal{M}_g \to \mathbb{R}$ be a continuous, compactly supported function and set $\smash{\widetilde{f}} \colon \T_g / \Stab_0(\agamma) \to \mathbb{R}$ to be its pullback to $\T_g / \Stab_0(\agamma)$. 
As a direct consequence of the definitions and the Hubbard--Masur theorem, for every $L > 0$ we can rewrite
\begin{align*}
\int_{\M_g} f(X) \, d \widehat{\bfm}^L_{\agamma, h}(X)
& = \int_{\T_g/\Stab_0(\agamma)} \smash{\widetilde{f}}(X) \, d \widetilde{\bfm}^L_{\agamma, h}(X)\\
& = \int_{\T_g/\Stab_0(\agamma)} \smash{\widetilde{f}}(X) \thinspace 
\lambda^+\left( \frac{\JS(X,\gamma)}{\sqrt{\Ext}_X(\gamma)} \right)
\, d (\pi_{[\gamma]})_* \widetilde{\nu}^L_{u,\agamma,h}(X) \\
& = \int_{\QoT/\Stab_0(\agamma)} \smash{\widetilde{f}}(\pi(q)) \thinspace \lambda^+(q) \, d \widetilde{\nu}^L_{u,\agamma,h}(q).
\end{align*}
Pushing back down to moduli space and dividing by the total mass $m_{\agamma,h}^L$ we get
\begin{align*}
\frac{1}{m_{\agamma,h}^L}\int_{\M_g} f(X) \, d {\widehat{\bfm}^L_{\agamma, h}}(X)
& = \frac{1}{m_{\agamma,h}^L} \int_{\QoM} f(\pi(q)) \thinspace \lambda^+(q) \, d {\widehat{\nu}^L_{u,\agamma, h}}(q) \\
& \to \frac{1}{b_g} \int_{\QoM} f({\pi}(q)) \thinspace \lambda^+(q) \, d \widehat{\nu}_{\mathrm{MV}}(q),
\end{align*}
where the convergence as $L \to \infty$ follows from Proposition \ref{prop:unstable_horoball_equid}. 
We can then integrate off the the $s_X$ factor using the product formula \eqref{eqn:MV_sphere_disint} and invoke Theorem \ref{thm:HMconst} to deduce the desired result.
\end{proof}

\section{Equidistribution of critical graphs}
\label{sec:avgunfold}

\subsection*{Outline of this section.} In this section we state and prove the main result of this paper in the case of a general multi-curve. Theorem \ref{theo:main_intro} follows as a special case.
The proof is obtained by putting the results of the previous sections into the outline discussed in the introduction.
Namely, after reducing the equidistribution problem at hand to a counting problem for curves whose Jenkins--Strebel differentials have constrained critical graphs, averaging and unfolding techniques allow us to further reduce to an equidistribution question for critical horoball measures.
Propositions \ref{prop:key_estimate} and \ref{prop:varA} will play an important role at this stage of the proof.
The results of \S\ref{sec:critical_equi}, which rely on the ergodicity of the Teichmüller horocycle flow, guarantee these measures equidistribute.
The relationship between the total mass of critical horoball measures and the Kontsevich measure, explained in Corollary \ref{cor:horomass}, then allows us to relate the asymptotics of our curve counting problem to the Kontsevich measure.

\subsection*{From equidistribution to counting.} 
Fix an ordered, oriented simple closed multi-curve $\vg$. 
For every $L > 0$, consider the extremal length counting function
\[
s(X,\agamma,L) := \# \{{\alpha} \in \mathrm{Mod}_g \cdot \agamma \ | \ \sqrt{\mathrm{Ext}}_X(\vec{\alpha}) \leq L\}.
\]
This does not depend on the marking of $X \in \mathcal{T}_g$ but only on its underlying conformal structure. Work of Mirzakhani \cite{Mir08b} gives sharp asymptotics for this count:
\begin{equation}
\label{eq:mirA}
\lim_{L \to \infty} \frac{s(X,\agamma,L)}{L^{6g-6}} = \frac{m_{\vg} \cdot \Lambda_g}{b_g},
\end{equation}
where $\Lambda_g > 0$ is the Hubbard--Masur constant, $b_g > 0$ is the total Masur--Veech volume of $\QoM$, and $m_{\vg}$ is the total mass of the fibered Kontsevich measure $\mathring{\eta}_{\mathrm{Kon}}^{\Delta}$ on the moduli space $\mathcal{MRG}(S_g \setminus \vg; \Delta)$.

\begin{remark}
As discussed in \cite[Remark 7.6]{spine}, the constant $m_{\vg}$ can be reconciled with the usual statement of \eqref{eq:mirA} involving the ``frequency'' $c(\gamma)$ by observing that we are counting ordered, oriented multi-curves (introducing a factor of $[\Stab(\gamma):\Stab_0(\agamma)]$) and that the coefficient of the top degree part of the Weil--Petersson volume polynomial of $S_g \setminus \agamma$ is exactly $m_{\agamma}$.
\end{remark}

Recall that our goal is to study the asymptotic distribution of the counting measures on $\mathcal{MRG}(S_g \setminus \vg; \Delta)$ given by
\[
\eta_{X,\agamma}^L := \sum_{\vec{\alpha} \in \mathrm{Mod}_g \cdot \agamma} \mathbbm{1}_{[0,L]}\left(\sqrt{\Ext}_X({\alpha})\right) \cdot \delta_{\Sp^1(X, \vec{\alpha})}.
\]
The following is the general version of Theorem \ref{theo:main_intro}; its proof will occupy the rest of this section. Compare with \cite[Theorem 7.7]{spine}.

\begin{theorem}
\label{theo:main_2A}
Let $\agamma:=(\agamma_1,\dots,\agamma_k)$ be an ordered, oriented simple closed multi-curve on $S_g$ and $X \in \mathcal{M}_g$ be a complex structure on $S_g$. Then
\[
\lim_{L \to \infty} \frac{\eta_{X,\agamma}^L}{s(X,\agamma,L)} = \frac{\mathring{\eta}_{\mathrm{Kon}}^{\Delta}}{m_{\agamma}}
\]
with respect to the weak-$\star$ topology for measures on $\mathcal{MRG}(S_g \setminus \agamma;\Delta)$.
\end{theorem}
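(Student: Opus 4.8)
The plan is to follow the Margulis averaging-and-unfolding strategy exactly as in \cite[\S7]{spine}, feeding in the quantitative inputs from \S\ref{sec:msl} and \S\ref{sec:weightsvaryA} to control the places where the JS construction is badly behaved. By the equivalence of Theorem \ref{theo:main_2A} with its $f$-weighted counting reformulation (the general analogue of Theorem \ref{theo:red_introA}), it suffices to show that for a continuous, compactly supported $h \colon \MRG(S_g \setminus \agamma; \Delta) \to \RR_{\geq 0}$,
\[
\lim_{L \to \infty} \frac{1}{s(X,\agamma,L)} \sum_{\vec{\alpha} \in \Mod_g \cdot \agamma} \mathbbm{1}_{[0,L]}\!\left(\sqrt{\Ext}_X(\alpha)\right) h\!\left(\Sp^1(X, \vec{\alpha})\right)
= \frac{1}{m_{\agamma}} \int h \, d\mathring{\eta}_{\mathrm{Kon}}^{\Delta}.
\]
Using Mirzakhani's asymptotic \eqref{eq:mirA} for the denominator and Corollary \ref{cor:horomass} for the mass $m^L_{\agamma,h} = L^{6g-6}\int h\, d\mathring{\eta}_{\mathrm{Kon}}^{\Delta}$, the right-hand side is $\lim_L m^L_{\agamma, h}/(L^{6g-6} m_{\agamma} \Lambda_g / b_g) \cdot (\Lambda_g/b_g)$, so what we must prove is that the $h$-weighted count is asymptotic to $(\Lambda_g/b_g)\cdot m^L_{\agamma,h}$.

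\textbf{Averaging.} Fix a small ball $B = B_\delta(X_0) \subseteq \T_g$ (with $\delta$ to be chosen via Proposition \ref{prop:varA}) and a bump function supported near it. Following Margulis, we ``thicken'' the count by replacing the single base surface $X$ with an average over $B$: one shows that
\[
\sum_{\vec{\alpha}} \mathbbm{1}_{[0,L]}\!\left(\sqrt{\Ext}_X(\alpha)\right) h\!\left(\Sp^1(X, \vec{\alpha})\right)
\approx \frac{1}{\bfm(B)} \int_B \sum_{\vec{\alpha}} \mathbbm{1}_{[0,L]}\!\left(\sqrt{\Ext}_{Y}(\alpha)\right) h\!\left(\Sp^1(Y, \vec{\alpha})\right) d\bfm(Y),
\]
up to an error that is $o(L^{6g-6})$. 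This is where the geometric estimates enter. The curves $\vec{\alpha}$ for which $\JS(X,\alpha)$ is near the multiple-zero locus (i.e.\ $\alpha \in F(\mathcal K, \sigma)$ for a compact neighborhood $\mathcal{K} \supseteq B$) are negligible by Proposition \ref{prop:key_estimate}: their count is $O(L^{6g-7}) + O(\sigma L^{6g-6})$, which after sending $\sigma \to 0$ at the end contributes nothing to the limit. For the remaining $\alpha \notin F(\mathcal K, \sigma)$, Proposition \ref{prop:varA} guarantees that $\Sp^1(Y, \vec\alpha)$ and $\Sp^1(X, \vec\alpha)$ have the same topological type and edge lengths within a factor $e^{\pm\varepsilon}$, while Theorem \ref{thm:Kerckhoff}/Lemma \ref{lem:Extcomp} control the extremal length ratio; by uniform continuity of $h$ this lets us replace $h(\Sp^1(Y,\vec\alpha))$ by $h(\Sp^1(X,\vec\alpha))$ and the indicator on $\sqrt{\Ext}_Y$ by the one on $\sqrt{\Ext}_X$ at the cost of an $\varepsilon$-error in $h$ and a slight inflation/deflation of $L$, both of which vanish in the appropriate limits.

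\textbf{Unfolding and equidistribution.} Having averaged over $B$, we unfold the sum over the $\Mod_g$-orbit of $\agamma$ against the integral over $\T_g$: the sum $\sum_{\vec\alpha \in \Mod_g \cdot \agamma} \int_{\T_g}$ collapses to a single integral $\int_{\T_g / \Stab_0(\agamma)}$, exactly as the total-mass computation of Corollary \ref{cor:horomass} envisions. This realizes the averaged $h$-weighted count as (a thickening of) the critical-JS-horoball measure $\widehat{\bfm}^L_{\agamma, h}$ evaluated against the bump function on $B$. Corollary \ref{cor:MV_horoball_equid} then says $\widehat{\bfm}^L_{\agamma,h}/m^L_{\agamma,h} \to (\Lambda_g / b_g)\, \widehat{\bfm}$ weakly, so pairing with the bump function and letting $L \to \infty$, then the bump function shrink to $\delta_{X}$, then $\varepsilon \to 0$ and $\sigma \to 0$, yields the desired asymptotic $(\Lambda_g/b_g)\, m^L_{\agamma,h}$ for the $h$-weighted count. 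Dividing by $s(X,\agamma,L) \sim (m_{\agamma}\Lambda_g/b_g) L^{6g-6}$ and invoking Corollary \ref{cor:horomass} once more gives the theorem.

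\textbf{Main obstacle.} The delicate point is the averaging step: one needs the error from thickening to be genuinely $o(L^{6g-6})$ \emph{uniformly}, which requires (a) that the bad set $F(\mathcal K,\sigma)$ really is negligible on the nose --- this is Proposition \ref{prop:key_estimate}, and crucially the $\sigma L^{6g-6}$ term is the \emph{leading-order} part of the bad count, so one must be careful to take $\sigma \to 0$ only after $L \to \infty$ --- and (b) that on the good set the critical graph and extremal length vary with enough uniformity to commute the limits, which is precisely the content of Proposition \ref{prop:varA} together with Kerckhoff's formula. Bookkeeping the order of the four limits ($L \to \infty$, then $B \downarrow \{X\}$, then $\varepsilon \to 0$, then $\sigma \to 0$) and checking that the thickened count is sandwiched between scalar multiples of $\widehat{\bfm}^{L(1\pm\varepsilon')}_{\agamma, h'}$ for nearby test functions $h'$ is the technical heart of the argument; everything else is a direct transcription of \cite[\S7]{spine} with ``hyperbolic spine'' replaced by ``critical graph.''
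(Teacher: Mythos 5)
Your proposal is correct and follows essentially the same route as the paper: reduction to the $f$-weighted counting statement, Margulis averaging over a small ball using Propositions \ref{prop:key_estimate} and \ref{prop:varA} to control the bad set and the variation of critical graphs, unfolding to the critical-JS-horoball measures, and then Corollary \ref{cor:MV_horoball_equid}, Corollary \ref{cor:horomass}, and Mirzakhani's asymptotic \eqref{eq:mirA} to conclude. The only cosmetic difference is in the bookkeeping of limits: in the paper $\sigma$ is taken equal to $\varepsilon$ and $\delta$ is chosen as a function of $\varepsilon$, so there are effectively only two nested limits rather than four, and the bump function has total $\widehat{\bfm}$-mass $1$ so no separate shrinking of $B$ to $\{X\}$ is needed.
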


\begin{remark}
For the sake of brevity and readability, throughout the rest of this section we will shorten $\MRG(S_g \setminus \agamma;\Delta)$ to $\MRG$. We will not consider any other spaces of ribbon graphs in the sequel.
\end{remark}

As explained in \S\ref{sec:intro}, Theorem \ref{theo:main_2A} is equivalent to a counting problem for metric ribbon graphs. 
More concretely, it is enough to show that for every continuous, compactly supported $f \colon \mathcal{MRG} \to \mathbb{R}_{\geq 0}$,
\begin{gather*}
\lim_{L \to \infty} \frac{1}{s(X,\agamma,L)} \int_{\mathcal{MRG}} f(\bfx) \thinspace d\eta_{X,\agamma}^L(\bfx) 
= \frac{1}{m_{\agamma}}\int_{\mathcal{MRG}} f(\bfx) \thinspace d\mathring{\eta}_{\mathrm{Kon}}^{\Delta}(\bfx).
\end{gather*}
For every $L > 0$ consider the $f$-weighted counting function
\begin{align}
\label{eq:countA}
c(X,\agamma,f,L) &:= 
\int_{\mathcal{MRG}} f(\bfx) \thinspace d\eta_{X,\agamma}^L(\bfx)\\
&\phantom{:}= 
\sum_{\vec{\alpha} \in \mathrm{Mod}_g \cdot \agamma} \mathbbm{1}_{[0,L]}(\sqrt{\Ext}_X({\alpha})) \cdot f(\Sp^1(X, \vec{\alpha})). \nonumber
\end{align}
Observe that taking $f \equiv 1$, we recover the usual counting function $s(X, \agamma, L)$.

The rest of this section is devoted to proving the ensuing result, from which Theorem \ref{theo:main_2A} follows directly by the above discussion. This is a generalized version of Theorem \ref{theo:red_introA} from the introduction in the case of a general multi-curve.

\begin{theorem}
\label{theo:redA}
Let $\agamma:=(\agamma_1,\dots,\agamma_k)$ be an ordered, oriented simple closed multi-curve on $S_g$ and $X \in \mathcal{M}_g$ be a complex structure on $S_g$.
Then, for every continuous, compactly supported $f \colon \mathcal{MRG} \to \mathbb{R}_{\geq 0}$,
\[
\lim_{L \to \infty} \frac{c(X,\agamma,f,L)}{s(X,\agamma,L)} 
= \frac{1}{m_{\agamma}}\int_{\mathcal{MRG}} f(\bfx) \thinspace d\mathring{\eta}_{\mathrm{Kon}}^{\Delta}(\bfx).
\]
\end{theorem}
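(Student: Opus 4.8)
The plan is to run Margulis's averaging and unfolding argument, as in \cite[\S7]{spine}, using the estimates of \S\ref{sec:msl} and \S\ref{sec:weightsvaryA} to carry out the ``de-averaging'' step. Fix a non-zero, continuous, compactly supported $f\colon\MRG\to\RR_{\ge0}$ and a marked representative $X\in\T_g$ of the given conformal structure. The first move is to integrate the $f$-weighted count against a bump function: let $\psi\colon\M_g\to\RR_{\ge0}$ be continuous, supported in a small metric ball around $X$, and normalized so that $\int_{\M_g}\psi\,d\hbfm=1$, and write $\widetilde\psi$ for its $\Mod_g$-invariant lift to $\T_g$. Using the $\Mod_g$-equivariance identities $\Ext_Y(\phi^{-1}\agamma)=\Ext_{\phi Y}(\agamma)$ and $\Sp^1(Y,\phi^{-1}\agamma)=\Sp^1(\phi Y,\agamma)$ (the latter from the equivariance of the critical graph map together with the labelling conventions of \S\ref{sec:prelim}), one rewrites $c(Y,\agamma,f,L)$ as a sum over $\Mod_g/\Stab_0(\agamma)$ of the $\Stab_0(\agamma)$-invariant function $Z\mapsto\mathbbm{1}_{[0,L]}(\sqrt{\Ext}_Z(\agamma))\,f(\Sp^1(Z,\agamma))$ evaluated at translates $\phi Y$. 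Since $\bfm$ and $\widetilde\psi$ are $\Mod_g$-invariant, unfolding and comparing with \eqref{eq:Xihoro_meas} gives the exact identity
\[
\int_{\M_g}\psi(Y)\,c(Y,\agamma,f,L)\,d\hbfm(Y)=\int_{\M_g}\psi\,d\widehat{\bfm}_{\agamma,f}^L,
\]
which exhibits the averaged count as an integral against a critical-JS-horoball measure.

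Once in this form, the limit is forced by the machinery of \S\ref{sec:critical_equi}. Dividing by $m_{\agamma,f}^L$ and applying Corollary \ref{cor:MV_horoball_equid} gives $\tfrac{1}{m_{\agamma,f}^L}\int_{\M_g}\psi\,d\widehat{\bfm}_{\agamma,f}^L\to\tfrac{\Lambda_g}{b_g}\int_{\M_g}\psi\,d\hbfm=\tfrac{\Lambda_g}{b_g}$. Corollary \ref{cor:horomass} identifies $m_{\agamma,f}^L=L^{6g-6}\int_{\MRG}f\,d\mathring\eta_{\mathrm{Kon}}^{\Delta}$, and Mirzakhani's asymptotic \eqref{eq:mirA} gives $s(X,\agamma,L)\sim L^{6g-6}\,m_{\agamma}\Lambda_g/b_g$; multiplying out, all the analytic constants cancel and
\[
\lim_{L\to\infty}\frac{1}{s(X,\agamma,L)}\int_{\M_g}\psi(Y)\,c(Y,\agamma,f,L)\,d\hbfm(Y)=\frac{1}{m_{\agamma}}\int_{\MRG}f\,d\mathring\eta_{\mathrm{Kon}}^{\Delta}.
\]

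The remaining task, and the real content, is to pass from this averaged statement to the value of $c(\cdot,\agamma,f,L)/s(\cdot,\agamma,L)$ at $X$ itself, i.e.\ to show that $c(Y,\agamma,f,L)$ is nearly independent of $Y$ over a small ball around $X$, with an error that becomes negligible once one lets $L\to\infty$ and then auxiliary parameters $\varepsilon,\sigma\to0$. This is where \S\ref{sec:msl} and \S\ref{sec:weightsvaryA} intervene. Fix $\varepsilon,\sigma>0$, let $\delta=\delta(\mathcal K_0,\varepsilon,\sigma)\le\varepsilon$ be as in Proposition \ref{prop:varA} for a fixed compact $\mathcal K_0\subseteq\T_g$, take $\mathcal K=\overline{B_{\Teich}(X,\delta)}$, and assume $\psi$ is supported in the image of $\mathcal K$. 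For $Y\in\mathcal K$, split the sum defining $c(Y,\agamma,f,L)$ into the multi-curves $\vec\alpha$ with $\alpha\in F(\mathcal K,\sigma)$ and the rest. By Proposition \ref{prop:key_estimate} the first contribution is at most $\|f\|_\infty(CL^{6g-7}+C\sigma L^{6g-6})$ uniformly over $Y\in\mathcal K$. For each $\vec\alpha$ in the complement, Proposition \ref{prop:varA} puts $\Sp^1(X,\vec\alpha)$ and $\Sp^1(Y,\vec\alpha)$ in a common facet of $\MRG$ with all edge-length ratios in $[e^{-\varepsilon},e^{\varepsilon}]$, hence within a distance $\rho(\varepsilon)\to0$, so $f$ differs on them by at most its modulus of continuity $\omega_f(\rho(\varepsilon))$; and Theorem \ref{thm:Kerckhoff} (equivalently Lemma \ref{lem:Extcomp}) bounds the discrepancy between the cutoffs $\sqrt{\Ext}_X(\alpha)\le L$ and $\sqrt{\Ext}_Y(\alpha)\le L$, which by \eqref{eq:mirA} changes the count by at most $O(\delta)\,L^{6g-6}$. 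Summing these contributions and normalizing with \eqref{eq:mirA} once more yields, uniformly in $Y\in\mathcal K$,
\[
\limsup_{L\to\infty}\left|\frac{c(Y,\agamma,f,L)-c(X,\agamma,f,L)}{s(X,\agamma,L)}\right|\le e^{(6g-6)\varepsilon}\,\omega_f(\rho(\varepsilon))+C'(\delta+\sigma)
\]
for a constant $C'$ depending only on $f$ and $g$. Integrating against $\psi$ and combining with the limit of the previous paragraph shows that $\limsup_{L\to\infty}\bigl|c(X,\agamma,f,L)/s(X,\agamma,L)-m_{\agamma}^{-1}\int_{\MRG}f\,d\mathring\eta_{\mathrm{Kon}}^{\Delta}\bigr|$ obeys the same bound; letting $\varepsilon,\sigma\to0$ (whence $\delta\to0$) completes the argument.

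The hard part is this final de-averaging step, and specifically the fact that the bad-set error from Proposition \ref{prop:key_estimate} is of the same order $L^{6g-6}$ as the main term and only becomes negligible after sending $\sigma\to0$. This is why it is essential that the bound in Proposition \ref{prop:key_estimate} be uniform over the ball $\mathcal K$ and that Proposition \ref{prop:varA} provide control of the critical graph that is uniform in $\alpha$ precisely on the complement of that bad set. Everything else — the unfolding bookkeeping, the equivariance of $\Sp^1$, and the extraction of the constants via Corollaries \ref{cor:horomass} and \ref{cor:MV_horoball_equid} together with \eqref{eq:mirA} — is routine given the results assembled in the earlier sections, and Theorem \ref{theo:main_2A} then follows by the reduction already recorded above.
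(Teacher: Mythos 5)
Your proposal is correct and follows essentially the same route as the paper: discard the bad set via Proposition \ref{prop:key_estimate}, control critical graphs on its complement via Proposition \ref{prop:varA}, average against a bump function, unfold to a critical-JS-horoball measure, and conclude via Corollaries \ref{cor:horomass} and \ref{cor:MV_horoball_equid} together with \eqref{eq:mirA}. The only (cosmetic) difference is that you compare $c(Y,\agamma,f,L)$ to $c(X,\agamma,f,L)$ directly using the uniform continuity of $f$, whereas the paper sandwiches $f$ between the averaged functions $f_\varepsilon^{\min}$ and $f_\varepsilon^{\max}$ and finishes with monotone convergence.
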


For the rest of this section we fix a marked conformal structure $X \in \mathcal T_g$, an ordered, oriented simple closed multi-curve $\vg$ on $S_g$, and a non-zero, non-negative, continuous, compactly supported function $f\colon \mathcal{MRG} \to \mathbb{R}_{\geq 0}.$

\subsection*{Averaging counts.}
Our next goal is to average the counting functions introduced in (\ref{eq:countA}) over small neighborhoods of moduli space. Using the results from \S \ref{sec:weightsvaryA}, we first study how these counting functions vary in such neighborhoods. 

Recall that Proposition \ref{prop:varA} states that for every pair of conformal structures $X, Y \in \T_g$ that are sufficiently close in the Teichmüller metric and most ordered, oriented simple closed multi-curves $\vec{\alpha}$ on $S_g$, the critical graphs $\Sp^1(X,\vec{\alpha})$ and $\Sp^1(Y,\vec{\alpha})$ belong to the same facet of $\mathcal{MRG}$ and the corresponding edges have length differing by a multiplicative constant.
We now define analogous neighborhoods in the moduli space of ribbon graphs.

Given $\mrg \in \mathcal{MRG}$ and a positive constant $\varepsilon> 0$, denote by $N_{\varepsilon}(\mrg)$ the set of all $\mrgy \in \mathcal{MRG}$ in the same facet as $\mrg$, i.e., with the same topological type of underlying ribbon graph as $\mrg$, such that for every edge $e$ of $\mrg$ and $\mrgy$,
\[
e^{-\varepsilon} \cdot |e|_\mrg \leq |e|_\mrgy \leq e^\varepsilon \cdot |e|_\mrg;
\]
here we have implicitly fixed a local marking so we can compare the weights of specific edges.
For every $\varepsilon > 0$ consider the averaged functions
\[
f_{\varepsilon}^{\min}, f_{\varepsilon}^{\max} \colon \mathcal{MRG} \to \mathbb{R}_{\geq 0}
\]
given by
\[
f_{\varepsilon}^{\min}(\mrg) :=\min_{\mrgy \in N_{\varepsilon}(\mrg)} f(\mrgy),\quad
f_{\varepsilon}^{\max}(\mrg) := \max_{\mrgy \in N_{\varepsilon}(\mrg)} f(\mrgy).
\]

We begin our proof of Theorem \ref{theo:redA} with the following estimate, which allows us to compare counts of curves on $X$ with counts on nearby surfaces.
Propositions \ref{prop:key_estimate} and \ref{prop:varA} play a crucial role in the proof of this result. For $X \in \M_g$, a function $f$ on $\MRG$, and functions $F, G$ on $\RR$, we say that $F = O_{X,f}(G)$ if $F = O(G)$ where the implicit constants depend only on $X$ and $f$.
 
\begin{proposition}
\label{prop:compA}
For every $\varepsilon > 0$ there exists $\delta := \delta(X,\varepsilon) \in (0,\varepsilon)$ such that for every $Y \in \mathcal{M}_g$ with $d_\mathrm{Teich}(X,Y) < \delta$, the following estimates hold:
\begin{gather}
    c\left(Y,\agamma,f_{\varepsilon}^{\min},e^{-\delta} L\right) + O_{X,f}\left(L^{6g-7} + \varepsilon \cdot L^{6g-6}\right) \leq c(X,\agamma,f,L), \label{eq:bd_1A}\\
    c(X,\agamma,f,L) \leq c\left(Y,\agamma,f_{\varepsilon}^{\max},e^\delta L\right) + O_{X,f}\left(L^{6g-7} + \varepsilon \cdot L^{6g-6}\right). \label{eq:bd_2A}
\end{gather}
\end{proposition}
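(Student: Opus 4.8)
\textbf{Proof strategy for Proposition \ref{prop:compA}.}
The plan is to decompose the curves in the orbit $\Mod_g \cdot \agamma$ with $\sqrt{\Ext}_X(\alpha) \le L$ into a ``good'' set and a ``bad'' set, where the bad set is precisely $F(\mathcal K, \sigma)$ for an appropriate compact $\mathcal K$ and a parameter $\sigma$ to be chosen in terms of $\varepsilon$. The contribution of the bad set is controlled by Proposition \ref{prop:key_estimate}, which gives a bound of the form $C L^{6g-7} + C \sigma L^{6g-6}$; choosing $\sigma$ comparable to $\varepsilon$ absorbs this into the $O_{X,f}(L^{6g-7} + \varepsilon L^{6g-6})$ error term. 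The remaining task is then to compare the good contributions to $c(X,\agamma,f,L)$ and $c(Y,\agamma,f_\varepsilon^{\min}, e^{-\delta}L)$ (resp.\ $c(Y,\agamma,f_\varepsilon^{\max}, e^{\delta}L)$).

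First I would fix a compact set $\mathcal K \subseteq \T_g$ (large enough that it contains, say, the $1$-Teichm\"uller-ball around the fixed $X$, and whose interior still surjects onto a neighborhood of the image of $X$ in $\M_g$ under appropriate lifts of nearby $Y$), and apply Proposition \ref{prop:key_estimate} to get the constant $C = C(\mathcal K)$. Set $\sigma := \varepsilon$ (or any fixed multiple thereof). Then Proposition \ref{prop:varA}, applied with this $\mathcal K$ and this $\sigma$ and the given $\varepsilon$, produces a $\delta_0 = \delta_0(\mathcal K, \varepsilon, \sigma)>0$; take $\delta := \min(\delta_0, \varepsilon)$, possibly shrinking further so that, by Theorem \ref{thm:Kerckhoff}, $d_{\Teich}(X,Y) < \delta$ forces $\sqrt{\Ext}_Y(\alpha)/\sqrt{\Ext}_X(\alpha) \in [e^{-\delta}, e^\delta]$ for every $\alpha \in \MF_g$. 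For a curve $\alpha \notin F(\mathcal K, \sigma)$, Proposition \ref{prop:varA} then tells us that $\Sp^1(X,\vec\alpha)$ and $\Sp^1(Y,\vec\alpha)$ lie in the same facet of $\MRG$ with all edge-length ratios in $[e^{-\varepsilon}, e^\varepsilon]$, i.e.\ $\Sp^1(Y,\vec\alpha) \in N_\varepsilon(\Sp^1(X,\vec\alpha))$; hence $f(\Sp^1(X,\vec\alpha)) \ge f_\varepsilon^{\min}(\Sp^1(Y,\vec\alpha))$ and $f(\Sp^1(X,\vec\alpha)) \le f_\varepsilon^{\max}(\Sp^1(Y,\vec\alpha))$.

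Next I would assemble the inequality \eqref{eq:bd_1A}. Summing over good $\alpha$ with $\sqrt{\Ext}_X(\alpha)\le L$: each such term satisfies $\sqrt{\Ext}_Y(\alpha) \le e^\delta \sqrt{\Ext}_X(\alpha) \le e^\delta L$... wait, for the lower bound we instead want these curves to be counted in $c(Y, \agamma, f_\varepsilon^{\min}, e^{-\delta}L)$, so I would run the argument in the other direction: start from the sum defining $c(Y,\agamma,f_\varepsilon^{\min}, e^{-\delta}L)$, throw away the bad curves (error $O_{X,f}(L^{6g-7}+\varepsilon L^{6g-6})$ by Proposition \ref{prop:key_estimate}, using that $e^{-\delta}L \le L$), and observe that every remaining good curve $\alpha$ with $\sqrt{\Ext}_Y(\alpha) \le e^{-\delta}L$ has $\sqrt{\Ext}_X(\alpha) \le e^{\delta} \cdot e^{-\delta} L = L$ and contributes $f_\varepsilon^{\min}(\Sp^1(Y,\vec\alpha)) \le f(\Sp^1(X,\vec\alpha))$, so it is counted with at least as large a weight in $c(X,\agamma,f,L)$. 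This gives \eqref{eq:bd_1A}. The estimate \eqref{eq:bd_2A} is symmetric: starting from $c(X,\agamma,f,L)$, remove bad curves, and for each remaining good $\alpha$ with $\sqrt{\Ext}_X(\alpha)\le L$ note $\sqrt{\Ext}_Y(\alpha) \le e^\delta L$ and $f(\Sp^1(X,\vec\alpha)) \le f_\varepsilon^{\max}(\Sp^1(Y,\vec\alpha))$, so it is counted in $c(Y,\agamma,f_\varepsilon^{\max},e^\delta L)$.

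\textbf{Main obstacle.} The delicate point is the interplay between the choice of compact set and the requirement that the estimates be \emph{uniform} over all $Y \in \M_g$ within Teichm\"uller distance $\delta$ of $X$: since $\M_g$ is an orbifold, ``$d_{\Teich}(X,Y)<\delta$'' must be interpreted via lifts to $\T_g$, and one needs the compact set $\mathcal K$ in Propositions \ref{prop:key_estimate} and \ref{prop:varA} to simultaneously accommodate $X$ and all such lifts of $Y$ and to contain the Jenkins--Strebel differentials $\JS(X,\alpha)$ and $\JS(Y,\alpha)$ normalized to unit area (this is exactly what the definition of $F(\mathcal K,\sigma) = \bigcup_{X\in\mathcal K}F(X,\sigma)$ is designed to handle). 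One must also double-check that the implicit constants in $O_{X,f}$ genuinely depend only on $X$ and $f$: the $L^{6g-7}$ and $\varepsilon L^{6g-6}$ errors come with constant $C(\mathcal K)$, and $\mathcal K$ depends only on $X$; the substitution $\sigma = \varepsilon$ is what converts the $\sigma L^{6g-6}$ term of Proposition \ref{prop:key_estimate} into the stated $\varepsilon L^{6g-6}$. Everything else is bookkeeping with the extremal-length rescaling factors $e^{\pm\delta}$, which is routine given Theorem \ref{thm:Kerckhoff}.
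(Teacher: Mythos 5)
Your proposal is correct and follows essentially the same route as the paper: truncate by discarding the bad set $F(\mathcal K,\sigma)$ with $\mathcal K$ the closed unit Teichm\"uller ball around $X$ and $\sigma=\varepsilon$, control the discarded curves via Proposition \ref{prop:key_estimate}, and compare the good terms using Proposition \ref{prop:varA} together with Kerckhoff's formula (Theorem \ref{thm:Kerckhoff}). Your explicit treatment of \eqref{eq:bd_1A} by running the truncation on the $Y$-side count is exactly the ``similar argument'' the paper leaves to the reader for that inequality.
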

\begin{proof}
We prove \eqref{eq:bd_2A}. Similar arguments yield a proof of (\ref{eq:bd_1A}). 

We begin by applying Proposition \ref{prop:key_estimate} to  focus our attention on curves in the mapping class group orbit of $\agamma$ whose corresponding Jenkins-Strebel differentials have no short saddle connections.
Recall that for any compact set $\mathcal K \subseteq \T_g$ and any $\sigma > 0$, we use $F(\mathcal K, \sigma) \subseteq \mathcal{MF}_g(\mathbb{Z})$ to denote the set of integrally weighted simple closed multi-curves $\alpha$ on $S_g$ such that $\JS(Y, \alpha)$, after rescaling to have unit area, has a $\sigma$-short saddle connection for some $Y \in \mathcal K$. By abuse of notation we also use $F(\mathcal K, \sigma)$ to denote the set of ordered, oriented simple closed multi-curves $\vec{\alpha}$ on $S_g$ whose underlying multi-curve belongs to $F(\mathcal K, \sigma)$. 
Now take $\mathcal{K} \subseteq \mathcal{T}_g$ to be the closed unit ball in the Teichmüller metric centered at $X$ and $\sigma = \varepsilon > 0$ arbitrary.
Consider the truncated counting function
\begin{gather*}
c^\dagger(X,\agamma,f,L) 
:= \sum_{\vec{\alpha} \in \mathrm{Mod}_g \cdot \agamma \setminus F(\mathcal{K},\varepsilon)}
\mathbbm{1}_{[0, L]}\left(\sqrt{\mathrm{Ext}}_X(\alpha)\right) \cdot f(\Sp^1(X,\vec{\alpha})).
\end{gather*}
By Proposition \ref{prop:key_estimate} it follows that
\begin{equation}
\label{eq:a_2A}
c(X,\agamma,f,L) = c^\dagger(X,\agamma,f,L) + \|f\|_\infty \cdot O_\mathcal{K}\left(L^{6g-7} + \varepsilon \cdot L^{6g-6}\right).
\end{equation}

We can now invoke the geometric comparison results of Section \ref{sec:weightsvaryA}. Consider $\delta = \delta(\mathcal{K},\varepsilon,\varepsilon) > 0$ as in Proposition \ref{prop:varA} and set 
\[\delta' := \min \{1,\delta,\varepsilon\}.\] 
Now for any $Y \in \mathcal{T}_g$ such that $d_\mathrm{Teich}(X,Y) < \delta'$ and any  $\vec{\alpha} \in \mathrm{Mod}_g \cdot \agamma \setminus F(\mathcal{K},\varepsilon),$
it follows from Proposition \ref{prop:varA} that $\Sp^1(X,\vec{\alpha})$ and $\Sp^1(Y,\vec{\alpha})$ are in the same facet of $\mathcal{MRG}$ and that for every edge $e$ of such metric ribbon graphs,
\begin{equation*}
%\label{eq:bd_3}
e^{-\varepsilon} \cdot |e|_{\Sp^1(Y,\vec{\alpha})}  \leq |e|_{\Sp^1(X,\vec{\alpha})} \leq e^\varepsilon \cdot |e|_{\Sp^1(Y,\vec{\alpha})}.
\end{equation*}
It follows that $\Sp^1(X,\vec{\alpha}) \in N_{\varepsilon}(\Sp^1(Y,\vec{\alpha}))$, so by definition
\begin{equation}
\label{eq:d1}
f(\Sp^1(X,\vec{\alpha})) \leq f^{\max}_{\varepsilon}(\Sp^1(Y,\vec{\alpha})).
\end{equation}

By Kerckhoff's characterization of the Teichmüller metric (Theorem \ref{thm:Kerckhoff}),
\begin{equation}
\label{eq:d2}
\sqrt{\Ext}_Y(\alpha) \leq e^{\delta'} \cdot\sqrt{\Ext}_X(\alpha).
\end{equation}
From \eqref{eq:d1} and \eqref{eq:d2} we deduce
\begin{equation}
\label{eq:a_3A}
c^\dagger(X,\agamma,f,L) \leq c(Y,\agamma,f^{\max}_{\varepsilon},e^{\delta'} L).
\end{equation}
Putting together (\ref{eq:a_2A})  and (\ref{eq:a_3A}) we conclude  
\[
c(X,\agamma,f,L) \leq c\left(Y,\agamma,f_{\varepsilon}^{\max},e^{\delta'} L\right) + \|f\|_\infty \cdot O_{\mathcal{K}}\left(L^{6g-7} + \varepsilon \cdot L^{6g-6}\right). \qedhere
\]
\end{proof}

%Recall that $\mu$ denotes the Masur--Veech measure on $\mathcal{Q}^1\mathcal{T}_g$ and that $\mathbf{m} := \pi_* \mu$ denotes its pushforward to $\mathcal{T}_g$. Recall that $\widehat{\mathbf{m}}$ denotes the local pushforward of $\mathbf{m}$ to $\mathcal{M}_g$. 

We now integrate Proposition \ref{prop:compA} against the pushforward $\hbfm$ of Masur--Veech measure.
For every $\delta \in (0,1)$ denote by $U_X(\delta) \subseteq \mathcal{M}_g$ the open ball of radius $\delta$ centered at $X \in \mathcal{M}_g$ with respect to the Teichmüller metric and let $\beta_{X,\delta} \colon \mathcal{M}_g \to \mathbb{R}_{\geq 0}$ be any bump function supported on $U_X(\delta)$ of total $\mathbf{m}$ mass $1$. 

\begin{corollary}\label{cor:comp_2A}
Let all notation be as in Proposition \ref{prop:compA}. Then $c(X, \vg, f, L)$ is bounded below by
\begin{equation} \label{eq:bd_4A}
\int_{\mathcal{M}_g} \beta_{X,\delta}(Y) \cdot c\left(Y,\agamma,f_{\varepsilon}^{\min},e^{-\delta} L\right) \thinspace d\widehat{\mathbf{m}}(Y) + O_{X,f}\left(L^{6g-7} + \varepsilon L^{6g-6}\right) 
\end{equation}
and above by 
\begin{equation}
\label{eq:bd_5A}
\int_{\mathcal{M}_g} \beta_{X,\delta}(Y) \cdot c\left(Y,\agamma,f_{\varepsilon}^{\max},e^\delta L\right) \thinspace d\widehat{\mathbf{m}}(Y) + O_{X,f}\left(L^{6g-7} + \varepsilon L^{6g-6}\right). 
\end{equation}
%    {\small
%    \begin{gather}
%    \int_{\mathcal{M}_g} \beta_{X,\delta}(Y) \cdot c\left(Y,\agamma,f_{\varepsilon}^{\min},e^{-\delta} L\right) \thinspace d\widehat{\mathbf{m}}(Y) + O_{X,f}\left(L^{6g-7} + \varepsilon L^{6g-6}\right) \leq c(X,\agamma,f,L), \label{eq:bd_4A} \\ 
%    c(X,\agamma,f,L) \leq 
%    \int_{\mathcal{M}_g} \beta_{X,\delta}(Y) \cdot c\left(Y,\agamma,f_{\varepsilon}^{\max},e^\delta L\right) \thinspace d\widehat{\mathbf{m}}(Y) + O_{X,f}\left(L^{6g-7} + \varepsilon L^{6g-6}\right). \label{eq:bd_5A}
%\end{gather}    }
\end{corollary}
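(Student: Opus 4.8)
The plan is simply to integrate the pointwise comparison estimates of Proposition~\ref{prop:compA} against the probability density $\beta_{X,\delta}$. First I would fix $\varepsilon>0$ and take $\delta=\delta(X,\varepsilon)\in(0,\varepsilon)$ to be the constant produced by Proposition~\ref{prop:compA}; this is exactly the radius appearing in both $U_X(\delta)$ and $\beta_{X,\delta}$. The key (and immediate) point is that $\beta_{X,\delta}$ is supported on $U_X(\delta)$, so every $Y$ in its support satisfies $d_\mathrm{Teich}(X,Y)<\delta$ and hence the inequalities \eqref{eq:bd_1A} and \eqref{eq:bd_2A} hold at such $Y$.

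To obtain the upper bound \eqref{eq:bd_5A}, I would multiply \eqref{eq:bd_2A} by the nonnegative weight $\beta_{X,\delta}(Y)$ and integrate in $Y$ against $d\widehat{\mathbf{m}}(Y)$. Because $c(X,\agamma,f,L)$ does not depend on $Y$ and $\beta_{X,\delta}$ has total mass $1$, the left-hand side integrates to exactly $c(X,\agamma,f,L)$; the leading term on the right integrates to $\int_{\mathcal{M}_g}\beta_{X,\delta}(Y)\,c(Y,\agamma,f_{\varepsilon}^{\max},e^\delta L)\,d\widehat{\mathbf{m}}(Y)$. For the error term one notes, tracing the proof of Proposition~\ref{prop:compA}, that the implicit constant in $O_{X,f}(L^{6g-7}+\varepsilon L^{6g-6})$ depends only on $\|f\|_\infty$ and on the fixed compact set $\mathcal{K}$ (the closed unit Teichmüller ball about $X$), and in particular not on $Y$; since $\delta\le 1$ gives $U_X(\delta)\subseteq\mathcal{K}$, this bound is uniform over the support of $\beta_{X,\delta}$, so integrating it against a probability density leaves it of the same order. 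This produces \eqref{eq:bd_5A}, and the lower bound \eqref{eq:bd_4A} follows verbatim by integrating \eqref{eq:bd_1A} instead of \eqref{eq:bd_2A}.

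There is no genuine obstacle here: the corollary is merely the integration of a $Y$-uniform pointwise inequality against a probability measure. The only thing worth checking with any care is the uniformity of the error term over $Y\in U_X(\delta)$, which is transparent once one observes that the implicit constants in Proposition~\ref{prop:compA} are controlled entirely through $\mathcal{K}$ and $f$.
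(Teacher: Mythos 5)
Your proposal is correct and is exactly the approach the paper has in mind: the corollary has no written proof and is introduced by the sentence ``We now integrate Proposition \ref{prop:compA} against the pushforward $\hbfm$ of Masur--Veech measure,'' which is precisely your argument. Your added remark about the $Y$-uniformity of the implicit constant (it depends only on $\mathcal{K}$ and $\|f\|_\infty$, not on $Y \in U_X(\delta) \subseteq \mathcal{K}$) is the one point worth spelling out, and you handle it correctly.
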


\subsection*{Unfolding averaged counts.}
Unfolding the integrals in \eqref{eq:bd_4A} and \eqref{eq:bd_5A} over $\mathcal{T}_{g} / \text{Stab}_0(\agamma)$ and pushing them back down to $\mathcal{M}_{g}$ in a suitable way will reduce the proof of Theorem \ref{theo:redA} to an applicaton of Corollary \ref{cor:MV_horoball_equid}.
The following proposition describes this principle; 
the reader should also compare to \cite[Proposition 3.3]{Ara20a} and \cite[Proposition 6.6]{spine}.

\begin{proposition}
	\label{prop:pull_pushA}
	Fix a continuous, compactly supported function $h \colon \mathcal{MRG} \to \mathbb{R}_{\geq 0}$. Then for every $\delta > 0$ and every $L > 0$,
	\begin{equation}
	\label{eq:unfoldingA}
	\int_{\mathcal{M}_{g}} \beta_{X,\delta}(Y) \cdot c(Y,\agamma,h,L) \thinspace d\widehat{\mathbf{m}}(Y) = \int_{\mathcal{M}_{g}} \beta_{X,\delta}(Y) \thinspace d\hbfm_{\agamma,h}^{L}(Y).
	\end{equation}
\end{proposition}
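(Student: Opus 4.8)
The plan is to establish \eqref{eq:unfoldingA} by a standard ``unfolding'' computation exploiting the $\Mod_g$-invariance of the Masur--Veech measure $\bfm$. First I would rewrite the left-hand integrand as an orbit sum. Introduce the \emph{base integrand}
\[
G(Y) := \mathbbm{1}_{[0,L]}\left(\sqrt{\Ext}_Y(\gamma)\right) h\left(\Sp^1(Y,\agamma)\right),
\]
a nonnegative Borel function on $\T_g$. Because $\Ext$ is $\Mod_g$-equivariant and the critical graph map $\Sp^1(\cdot,\agamma)$ is $\Stab_0(\agamma)$-invariant (recall from \S\ref{sec:prelim} that $\Stab_0(\agamma)$ preserves the labelings of the complementary subsurfaces), $G$ is $\Stab_0(\agamma)$-invariant and so descends to $\T_g/\Stab_0(\agamma)$. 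Writing each $\vec{\alpha}\in\Mod_g\cdot\agamma$ uniquely as $\psi\agamma$ with $\psi\in\Mod_g$ determined up to right multiplication by $\Stab_0(\agamma)$, the $\Mod_g$-equivariance of $\Ext$ and of the critical graph map (the latter identifying $\MRG(S_g\setminus\psi\agamma;\Delta)$ with $\MRG(S_g\setminus\agamma;\Delta)=\MRG$ compatibly with labelings) gives $\mathbbm{1}_{[0,L]}(\sqrt{\Ext}_Y(\psi\gamma))\,h(\Sp^1(Y,\psi\agamma)) = G(\psi^{-1}Y)$. Summing over the orbit therefore yields
\[
c(Y,\agamma,h,L) = \sum_{\phi\,\in\,\Stab_0(\agamma)\backslash\Mod_g} G(\phi Y),
\]
a well-defined expression since $G$ is $\Stab_0(\agamma)$-invariant.

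Next I would carry out the unfolding. Pull $\beta_{X,\delta}$ back to a $\Mod_g$-invariant function on $\T_g$, still denoted $\beta_{X,\delta}$. Then $\beta_{X,\delta}\cdot G$ is $\Stab_0(\agamma)$-invariant, and since $\beta_{X,\delta}(\phi Y)=\beta_{X,\delta}(Y)$ the left-hand side of \eqref{eq:unfoldingA} equals $\int_{\M_g}\sum_{\phi\in\Stab_0(\agamma)\backslash\Mod_g}(\beta_{X,\delta}\cdot G)(\phi Y)\,d\hbfm(Y)$. The $\Mod_g$-invariance of $\bfm$ now yields the usual unfolding identity
\[
\int_{\M_g}\sum_{\phi\in\Stab_0(\agamma)\backslash\Mod_g}(\beta_{X,\delta}\cdot G)(\phi Y)\,d\hbfm(Y) = \int_{\T_g/\Stab_0(\agamma)}\beta_{X,\delta}(Y)\,G(Y)\,d\tbfm(Y),
\]
where $\tbfm$ is the local pushforward of $\bfm$. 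Comparing with \eqref{eq:Xihoro_meas} we have $G(Y)\,d\bfm(Y) = d\bfm_{\agamma,h}^L(Y)$, hence $G(Y)\,d\tbfm(Y) = d\tbfm_{\agamma,h}^L(Y)$; pushing forward to $\M_g$ — again using that $\beta_{X,\delta}$ is $\Mod_g$-invariant and that $\hbfm_{\agamma,h}^L$ is the pushforward of $\tbfm_{\agamma,h}^L$ — turns the right-hand side into $\int_{\M_g}\beta_{X,\delta}(Y)\,d\hbfm_{\agamma,h}^L(Y)$, which is the right side of \eqref{eq:unfoldingA}. Absolute convergence and the applicability of Tonelli's theorem throughout are clear: all integrands are nonnegative, $\beta_{X,\delta}$ and $h$ are compactly supported, and Mirzakhani's asymptotics \eqref{eq:mirA} make $c(Y,\agamma,h,L)$ finite.

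The step demanding the most care is the bookkeeping in the first paragraph: one must verify that $G$ is invariant under \emph{precisely} $\Stab_0(\agamma)$ and under no larger subgroup. This is exactly why the sum defining $c(Y,\agamma,h,L)$ runs over the orbit $\Mod_g\cdot\agamma\cong\Mod_g/\Stab_0(\agamma)$ of ordered, oriented multi-curves rather than over unordered, unoriented ones, and why the horoball measure on the other side lives on $\T_g/\Stab_0(\agamma)$; any mismatch here would produce spurious symmetry factors. Everything else is formal once one adopts the convention — as with the disintegration formulas of \S\ref{sec:critical_equi} — that pushforwards of measures to the relevant moduli orbifolds are weighted by automorphisms, so that no explicit stabilizer-size constants intervene.
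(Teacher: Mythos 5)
Your proof is correct and follows essentially the same unfolding computation as the paper: rewrite $c(Y,\agamma,h,L)$ as a coset sum over $\Stab_0(\agamma)\backslash\Mod_g$ using $\Mod_g$-equivariance of extremal length and the critical graph map, then unfold against the $\Mod_g$-invariant measure $\bfm$ and identify the result via \eqref{eq:Xihoro_meas}. One small imprecision in your closing remark: for the unfolding you need only that $G$ is $\Stab_0(\agamma)$-invariant and that $\Mod_g\cdot\agamma\cong\Mod_g/\Stab_0(\agamma)$ (the latter automatic from the definition of the oriented stabilizer), not that $G$ is invariant under \emph{precisely} $\Stab_0(\agamma)$ — extra accidental symmetry of $G$ would not spoil the identity.
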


\begin{remark}
Notice that our weight function has changed names; this is because we eventually apply Proposition \ref{prop:pull_pushA} with $h$ equal to the functions $f_{\varepsilon}^{\max}$ and $f_{\varepsilon}^{\min}$.
\end{remark} 

\begin{proof}
Let $\delta > 0$ and $L > 0$ be arbitrary. For every $Y \in \mathcal{M}_{g}$ one can rewrite the counting function $c(Y,\agamma,h,L)$ as follows:
	\begin{align*}
	c(Y,\agamma,h,L)  &= \sum_{\vec{\alpha} \in \text{Mod}_{g} \cdot \agamma} \mathbbm{1}_{[0,L]}\left(\sqrt{\Ext}_Y(\alpha)\right) \cdot h(\Sp^1(Y,\vec{\alpha}))\\
	&= \sum_{[\phi] \in \text{Mod}_{g}/\text{Stab}_0(\agamma)} \mathbbm{1}_{[0,L]}\left(\sqrt{\Ext}_Y(\phi. \agamma)\right) \cdot h(\Sp^1(Y,\phi.\agamma)) \\
	&= \sum_{[\phi] \in \text{Mod}_{g}/\text{Stab}_0(\agamma)} \mathbbm{1}_{[0,L]}\left(\sqrt{\Ext}_{\phi^{-1}.Y}(\agamma)\right) \cdot h(\Sp^1(\phi^{-1}.Y,\agamma)) \\
	&= \sum_{[\phi] \in \text{Stab}_0(\agamma) \backslash \text{Mod}_{g}} \mathbbm{1}_{[0,L]}\left(\sqrt{\Ext}_{\phi.Y}(\agamma)\right) \cdot h(\Sp^1(\phi.Y,\agamma)). \\
	\end{align*}
	Let us record this fact as
	\begin{equation}
	\label{eq:count_masA}
	c(Y,\agamma,h,L)  = \sum_{[\phi] \in \text{Stab}_0(\agamma) \backslash \text{Mod}_{g}} \mathbbm{1}_{[0,L]}\left(\sqrt{\Ext}_{\phi.Y}(\agamma)\right) \cdot h(\Sp^1(\phi.Y,\agamma)).
	\end{equation}

Denote by
$p_{\agamma}: \mathcal{T}_{g}/\text{Stab}_0(\agamma) \to \mathcal{M}_{g}$
the quotient map and let $\smash{\widetilde{\beta}_{X,\delta}} = \beta_{X,\delta} \circ p_{\agamma}$ be the lift of $ \beta_{X,\delta}$ to this cover. Unfolding the integral on the left hand side of (\ref{eq:unfoldingA}) using (\ref{eq:count_masA}) it follows that
	\begin{align*}
	\int_{\mathcal{M}_{g}} \beta_{X,\delta}(Y)& \cdot c (Y,\agamma,h,L) \thinspace d \widehat{\mathbf{m}}(Y) \\
	& = \int_{\mathcal{T}_{g}/\text{Stab}_0(\agamma)} \widetilde{\beta}_{X,\delta}(Y) \cdot \mathbbm{1}_{[0,L]}\left(\sqrt{\Ext}_Y(\agamma)\right) \cdot h(\Sp^1(Y,\agamma)) \thinspace
	d \tbfm(Y)  \\
	& = \int_{\mathcal{T}_{g}/\text{Stab}_0(\agamma)} \widetilde{\beta}_{X,\delta}(Y)
	\thinspace d \tbfm_{\agamma,h}^{L}(Y) \\
	& = \int_{\mathcal{M}_{g}} \beta_{X,\delta}(Y)
	\thinspace d \hbfm_{\agamma,h}^{L}(Y),
	\end{align*}
where the second equality follows from the definition of the horoball measure $\tbfm_{\agamma,h}^{L}$ appearing in \eqref{eq:Xihoro_meas} and the third equality follows by taking the pushforward.
\end{proof}

\subsection*{Reducing counting to equidistribution.} 
We can now finish the proof of Theorem \ref{theo:redA} by applying our equidistribution results from \S \ref{sec:critical_equi}.
Proposition \ref{prop:pull_pushA} relates averages of the $f$-weighted counting function $c(X,\agamma,f,L)$ to horoball measures; our strategy now is to relate the original counting function and the mass of these measures, which we can then compare with the count $s(X,\agamma,L)$ of all ordered, oriented simple closed multi-curves in the $\Mod_g$-orbit of $\agamma$.

\begin{proof}[Proof of Theorem \ref{theo:redA}]
Recall that we are aiming to prove that
\[
\lim_{L \to \infty} \frac{c(X,\agamma,f,L)}{s(X,\agamma,L)} 
= \frac{1}{m_{\agamma}}\int_{\mathcal{MRG}} f(\mrg) \thinspace d\mathring{\eta}_{\mathrm{Kon}}^{\Delta}(\mrg).
\]

By Corollary \ref{cor:horomass}, we know that for any continuous, compactly supported function $h \colon \mathcal{MRG} \to \mathbb{R}_{\geq 0}$, the total mass $m_{\vg, h}^L$ of the unstable horoball measure $\widehat{\mathbf{m}}_{\vg, h}^L$ on $\mathcal{M}_g$ is $L^{6g-6}$ times the integral
\[r(\vg, h) := \int_{\MRG} h(\bfx) \thinspace d\mathring{\eta}_{\mathrm{Kon}}^{\Delta}(\mrg).\]
Proving Theorem \ref{theo:redA} is then equivalent to showing that both of the following inequalities hold:
\begin{gather}\label{eq:count_lbdA}
	\frac{r(\agamma, f)}{m_{\agamma}} \leq \liminf_{L \to \infty} \frac{c(X,\agamma,f,L)}{s(X,\agamma,L)},\\
	\label{eq:count_ubdA}
	\limsup_{L \to \infty} \frac{c(X,\agamma,f,L)}{s(X,\agamma,L)} \leq \frac{r(\agamma,f)}{m_{\agamma}}.
\end{gather}
	
We verify (\ref{eq:count_ubdA}) by averaging and unfolding; a proof of (\ref{eq:count_lbdA}) can be obtained following the same argument.
Let $\varepsilon \in (0,1)$ be arbitrary and $\delta = \delta(X,\varepsilon)$ as in Corollary \ref{cor:comp_2A}.
Shrinking $\delta$ as necessary, we can also assume that $e^{(6g-6){\delta}} \le 2$.
Set $h := f_{\varepsilon}^{\max}$; Corollary \ref{cor:comp_2A} then implies we can average our counting function to get
	\[
	c(X,\agamma,f,L) \leq \int_{\mathcal{M}_g} \beta_{X,\delta}(Y) \cdot c\left(Y,\agamma,h,e^\delta L\right) \thinspace d\widehat{\mathbf{m}}(Y) + O_{X,f}\left(L^{6g-7} + \varepsilon L^{6g-6}\right).
	\]
Set $L' := e^{\delta} L$. Unfolding the integral, i.e., using Proposition \ref{prop:pull_pushA}, we deduce
	\[
	c(X,\agamma,f,L) \leq \int_{\mathcal{M}_{g}} \beta_{X,\delta}(Y) \thinspace d\hbfm_{\agamma,h}^{L'}(Y) + O_{X,f}\left(L^{6g-7} + \varepsilon L^{6g-6}\right).
	\]
	Dividing this inequality by $m_{\agamma,h}^{L'}$ (which is nonzero so long as $f \neq 0$) we get
\[
\frac{c(X,\agamma,f,L)}{m_{\agamma,h}^{L'}} \leq 
\int_{\mathcal{M}_{g}} \beta_{X,\delta}(Y) \thinspace 
 d\frac{\hbfm_{\agamma,h}^{L'}}{m_{\agamma,h}^{L'}}(Y) + 
\frac{O_{X,f}(\varepsilon)}{r(\vg, h)} ,
\]
where we have invoked Corollary \ref{cor:horomass} and our assumption on $\delta$ to simplify the bound on the far right.
Since $h \ge f$, we know that $r(\agamma, h) \ge r(\agamma, f)$, so we can also absorb this term into our big $O$ estimate.

Taking the $\limsup$ as $L \to \infty$ and applying Corollary \ref{cor:MV_horoball_equid},
we deduce that
\begin{equation}\label{eq:fcount_vs_RSCmassA}
\limsup_{L \to \infty} \frac{c(X,\agamma,f,L)}{m_{\agamma,h}^{L'}} \leq \frac{\Lambda_g}{b_g}\int_{\mathcal{M}_{g}} \beta_{X,\delta}(Y) \thinspace d\widehat{\mathbf{m}}(Y) + O_{X,f}(\varepsilon)
= \frac{\Lambda_g}{b_g} + O_{X,f}(\varepsilon).
\end{equation}
Combining this with Mirzakhani's asymptotic count \eqref{eq:mirA} and our expression for horoball masses (Corollary \ref{cor:horomass}),
we arrive at the following estimate:
\begin{equation}\label{eqn:cs_bd_const}
\limsup_{L \to \infty} 
\frac{c(X,\agamma,f,L)}{s(X,\agamma,L)} 
\leq \frac{r(\agamma,h) \cdot e^{(6g-6)\delta}}{m_{\agamma}} + O_{X,f}(\varepsilon).
\end{equation}

We now shrink our approximating neighborhoods.
By definition, $h :=f_{\varepsilon}^{\max} \searrow f$ pointwise as $\varepsilon \searrow 0$. In particular, by the monotone convergence theorem,
\begin{align*}
	\lim_{\varepsilon \searrow 0}  r(\agamma,f_{\varepsilon}^{\max})
 = \lim_{\varepsilon \searrow 0}  \int_{\MRG} f_{\varepsilon}^{\max}(\mrg) \thinspace d\mathring{\eta}_{\mathrm{Kon}}^{\Delta}(\mrg) 
 =   \int_{\MRG} f(\mrg) \thinspace d\mathring{\eta}_{\mathrm{Kon}}^{\Delta}(\mrg) 
	&= r(\agamma,f).
	\end{align*}
Sending $\varepsilon$, hence $\delta$, to $0$ we get that the right-hand side of \eqref{eqn:cs_bd_const} converges to $r(\vg, f)/ m_{\agamma}$ as desired.
\end{proof}

This completes the proof of the counting result (Theorem \ref{theo:redA}), hence the proof of our main equidistribution result (Theorem \ref{theo:main_2A}).

\subsection*{Simultaneous equidistribution.} Drawing inspiration from \cite{AES16a,AES16b,ERW19, spine}, we now discuss the issue of simultaneous equidistribution. More concretely, we show that the placement of simple closed multi-curves in the space of singular measured foliations is asymptotically independent from the critical graph of the Jenkins--Strebel differential they define on a given Riemann surface.

Recall that $\mathcal{PMF}_g$ denotes the space of projective singular measured foliations on $S_g$ and that $[\lambda] \in \mathcal{PML}_g$ denotes the projective class of $\lambda \in \mathcal{MF}_g$. Given $X \in \mathcal{T}_g$, consider the coned-off Thurston measure $\mu_\mathrm{Thu}^X$ which to every measurable subset $A \subseteq \mathcal{PML}_g$ assigns the value
\[
\mu_\mathrm{Thu}^X(A) := \mu_\mathrm{Thu}(\{\lambda \in \mathcal{ML}_g \ | \ \mathrm{Ext}_X(\lambda) \leq 1, \ [\lambda] \in A\}).
\]
By Lemmma \ref{lem:HMfunc_Ext} and Theorem \ref{thm:HMconst}, the total mass of this measure is precisely $\Lambda(X) = \Lambda_g > 0$, the Hubbard--Masur constant. Furthermore, as discussed in \cite[Proposition 2.3]{ABEM12}, this measure can be related to the fiberwise measures $s_X$ and the Hubbard--Masur function $\lambda^+$ as follows.

\begin{proposition}
    \label{prop:ABEMb}
    Let $X \in \mathcal{T}_g$ be a marked complex structure on $S_g$. Consider the homeomorphism $[\Im] \colon S(X) \to \mathcal{PMF}_g$. Then,
    \[
    \lambda^+(q) = \frac{d [\Im]_* \thinspace \mu_{\mathrm{Thu}}^X}{ds_X}(q).
    \]
\end{proposition}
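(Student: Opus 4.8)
This is \cite[Proposition 2.3]{ABEM12}. The plan is to localize, over the leaf space of the unstable foliation, the total-mass identity $\Lambda(X) = \mu_{\mathrm{Thu}}(\{\beta \in \MF_g : \Ext_X(\beta) \le 1\})$ of Lemma \ref{lem:HMfunc_Ext}. The first step is to reformulate. Since $[\Im] \colon S(X) \to \mathcal{PMF}_g$ is a homeomorphism, the claimed equality of Radon--Nikodym derivatives is the same as the identity of measures $[\Im]_*\mu_{\mathrm{Thu}}^X = \lambda^+ \cdot s_X$ on $S(X)$, where $[\Im]_*\mu_{\mathrm{Thu}}^X$ denotes the transport of $\mu_{\mathrm{Thu}}^X$ to $S(X)$ by $[\Im]$; equivalently, for every Borel $A \subseteq \mathcal{PMF}_g$,
\[
\mu_{\mathrm{Thu}}\big(\{\lambda \in \MF_g : \Ext_X(\lambda) \le 1,\ [\lambda] \in A\}\big) = \int_{[\Im]^{-1}(A)} \lambda^+(q)\, ds_X(q).
\]
Taking $A = \mathcal{PMF}_g$ and recalling that $\Lambda(X) = \int_{S(X)} \lambda^+\, ds_X$ recovers Lemma \ref{lem:HMfunc_Ext}, so the content is the refinement to arbitrary $A$.

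To prove this I would compute the Masur--Veech measure $\nu_{\mathrm{MV}}$ on $\QoT$ in two ways near the transversal $S(X)$. On the one hand, by the disintegration \eqref{eqn:MV_sphere_disint}, $\nu_{\mathrm{MV}}$ decomposes over $\pi \colon \QoT \to \T_g$ with conditionals $s_Y$ on the spheres $S(Y) = \pi^{-1}(Y)$. On the other hand, I would use the structure of $\nu_{\mathrm{MV}}$ along the unstable foliation $\Fol^u$, whose leaves $\Fol^u_{[\beta]} = \{q : [\Im(q)] = [\beta]\}$ are parametrized by $[\beta] \in \mathcal{PMF}_g$: its conditional measures along $\Fol^u$ are, under the identification $\Fol^u_{[\beta]} \cong \MF_g(\beta)$ given by $q \mapsto \Re(q)$, exactly the Thurston-measure--built measures $\nu_{u,\beta}$, uniformly expanded by the Teichm\"uller geodesic flow with exponent $6g-6$ (equation \eqref{eqn:scaling}). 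Feeding this local product structure into $\int F\, d\nu_{\mathrm{MV}}$ for a test function $F$ supported near $S(X)$, re-coordinatizing $\QoT$ by $q \mapsto (\pi(q), [\Im(q)]) \in \T_g \times \mathcal{PMF}_g$, and using that $\Ext_{\pi(q)}(\beta)$ is multiplied by $e^{2t}$ under $g_t$, one rewrites the conditional $s_X$ in terms of the base measure $\bfm$ and the pushforwards $(\pi_{[\beta]})_*\nu_{u,\beta}$. The density appearing in this comparison is by definition $\frac{d\bfm}{d(\pi_{[\beta]})_*\nu_{u,\beta}} = \lambda^+$; and since $\nu_{u,\beta}$ is transported Thurston measure, unwinding the identifications turns the left-hand side of the displayed identity into $\mu_{\mathrm{Thu}}^X$.

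The main obstacle is the measure-theoretic bookkeeping of this matching. The conditional measures $\nu_{u,\beta}$ along $\Fol^u$ and the base measure $\bfm$ are both infinite, and the sphere $S(X)$ is transverse to (rather than saturated by) the unstable foliation, so some care is needed to pass from the product structure along $\Fol^u$ to a statement about the conditional $s_X$ on $S(X)$. This is precisely the step that forces the extremal-length factors, hence the genuinely non-constant Hubbard--Masur function, to appear rather than a mere constant, and it is carried out in \cite[\S2]{ABEM12}.
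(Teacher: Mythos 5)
The paper offers no proof of this proposition at all --- it is stated as a direct import of \cite[Proposition 2.3]{ABEM12} --- and your proposal correctly identifies that citation as the source. Your reformulation as the identity $\mu_{\mathrm{Thu}}^X(A) = \int_{[\Im]^{-1}(A)} \lambda^+(q)\, ds_X(q)$ is the right one (and correctly specializes to Lemma \ref{lem:HMfunc_Ext} when $A = \mathcal{PMF}_g$), and your sketch of matching the two disintegrations of $\nu_{\MV}$ --- over $\pi\colon\QoT\to\T_g$ versus along the unstable foliation --- is a faithful outline of the argument in \cite[\S 2]{ABEM12}.
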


Fix an ordered, oriented simple closed multi-curve $\vg := (\vg_1,\dots,\vg_k)$ on $S_g$ and a marked hyperbolic structure $X \in \mathcal{T}_g$. Recall that $\gamma \in \mathcal{MF}_g$ denotes the equivalence class of $\vg$ as a singular measured foliation on $S_g$. For every $L > 0$ consider the counting measure on $\mathcal{PML}_g$ given by
\[
\zeta_{\agamma,X}^L := \sum_{\vec{\alpha} \in \mathrm{Mod}_g \cdot \vg} \mathbbm{1}_{[0,L]}\left(\sqrt{\mathrm{Ext}}_X(\alpha)\right) \cdot \delta_{[\alpha]}.
\]
Observe that we weight a given projective class $[\alpha]$ by the index $[\Stab(\gamma):\Stab_0(\agamma)]$; this allows us to use the oriented count $s(X, \vg, L)$ below.
The following result can be deduced directly from Mirzakhani's work \cite[Theorem 6.4]{Mir08b}.

\begin{theorem}
\label{theo:mir}
In the weak-$\star$ topology for measures on $\mathcal{PML}_g$,
\[
\lim_{L \to \infty} \frac{\zeta_{\agamma,X}^L}{s(X,\vg,L)} = \frac{\mu_\mathrm{Thu}^X}{\Lambda_g}.
\]
\end{theorem}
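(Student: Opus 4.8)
The plan is to deduce this from Mirzakhani's equidistribution theorem for simple closed curves \cite[Theorem 6.4]{Mir08b} via the standard cone construction that converts counting in $\mathcal{PMF}_g$ into counting in $\MF_g$. Since $\mathcal{PMF}_g$ is compact, it suffices to fix a continuous $f \colon \mathcal{PMF}_g \to \RR$ and show that, writing $\zeta_{\agamma,X}^L(f) := \int_{\mathcal{PMF}_g} f \, d\zeta_{\agamma,X}^L$, one has $\zeta_{\agamma,X}^L(f)/s(X,\agamma,L) \to \Lambda_g^{-1}\int_{\mathcal{PMF}_g} f \, d\mu_{\mathrm{Thu}}^X$. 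First I would encode the numerator as an honest count on $\MF_g$: define $F\colon \MF_g \to \RR$ by $F(\mu) := f([\mu])$ when $\sqrt{\Ext}_X(\mu) \le 1$ and $F(\mu):=0$ otherwise. Because $\Ext_X(\alpha/L) = \Ext_X(\alpha)/L^2$ and $[\alpha/L] = [\alpha]$, this gives the identity $\zeta_{\agamma,X}^L(f) = \sum_{\alpha \in \Mod_g\cdot\agamma} F(\alpha/L)$. Since $\Ext_X(\cdot) = \|\JS(X,\cdot)\|$ is continuous and proper on $\MF_g$ (Theorem \ref{theo:HM1}), the function $F$ is bounded and compactly supported, and its discontinuity locus lies in the sphere $\{\sqrt{\Ext}_X(\mu)=1\}$, which is $\mu_{\mathrm{Thu}}$-null because $\mu_{\mathrm{Thu}}$ is a Lebesgue-class measure homogeneous of degree $6g-6$ under scaling.

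Next I would invoke Mirzakhani's theorem, which provides a constant $c = c(\agamma) > 0$ so that $L^{-(6g-6)}\sum_{\alpha \in \Mod_g\cdot\agamma}\delta_{\alpha/L}$ converges weakly on $\MF_g$ to $c\cdot\mu_{\mathrm{Thu}}$. A priori the test functions must be continuous and compactly supported, so to evaluate against $F$ one sandwiches $F$ between continuous compactly supported functions that agree with it away from an arbitrarily small neighborhood of the null sphere $\{\sqrt{\Ext}_X=1\}$ and runs a standard squeeze argument, using $\mu_{\mathrm{Thu}}$-nullity of the discontinuity locus to control the error. This yields
\[
\lim_{L \to \infty} \frac{\zeta_{\agamma,X}^L(f)}{L^{6g-6}} = c\int_{\MF_g} F \, d\mu_{\mathrm{Thu}} = c\int_{\{\sqrt{\Ext}_X(\mu)\le 1\}} f([\mu]) \, d\mu_{\mathrm{Thu}}(\mu) = c\int_{\mathcal{PMF}_g} f \, d\mu_{\mathrm{Thu}}^X ,
\]
the last equality being exactly the definition of the coned-off measure $\mu_{\mathrm{Thu}}^X$.

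Finally, taking $f \equiv 1$ in the displayed limit gives $L^{-(6g-6)}\,s(X,\agamma,L) \to c\cdot\mu_{\mathrm{Thu}}^X(\mathcal{PMF}_g) = c\cdot\Lambda_g$, where the total mass is identified with the Hubbard--Masur constant via Lemma \ref{lem:HMfunc_Ext} and Theorem \ref{thm:HMconst} (this is also consistent with Mirzakhani's asymptotic \eqref{eq:mirA}). Dividing the two limits, the unknown constant $c$ cancels and we obtain the claimed convergence $\zeta_{\agamma,X}^L(f)/s(X,\agamma,L) \to \Lambda_g^{-1}\int_{\mathcal{PMF}_g} f \, d\mu_{\mathrm{Thu}}^X$. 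I expect the only genuinely delicate point to be the approximation step: replacing the discontinuous cutoff $F$ by continuous test functions and checking that the resulting error is negligible, which hinges on the sphere of unit extremal length carrying no Thurston mass. One should also note that both $\zeta_{\agamma,X}^L$ and $s(X,\agamma,L)$ are sums over $\Mod_g\cdot\agamma$ of \emph{ordered, oriented} multi-curves, so a single projective class is weighted by $[\Stab(\gamma):\Stab_0(\agamma)]$ in both, keeping the two normalizations consistent so that $c$ indeed cancels.
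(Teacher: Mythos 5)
Your proof is correct. The paper itself gives no argument for this statement, only the citation to Mirzakhani's \cite[Theorem~6.4]{Mir08b}, and your reconstruction---encoding $\zeta_{\agamma,X}^L(f)$ as $\sum_{\alpha} F(\alpha/L)$ for a bounded, compactly supported cutoff $F$ on $\MF_g$, invoking Mirzakhani's equidistribution to Thurston measure, squeezing past the discontinuity locus (which is $\mu_{\mathrm{Thu}}$-null by the scaling/homogeneity argument), and cancelling the unknown normalizing constant $c$ against the $f\equiv 1$ case---is precisely the standard deduction the authors are leaving implicit, including the observation that both numerator and denominator count ordered, oriented multi-curves so that the stabilizer index factors cancel.
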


It is natural to consider the question of simultaneous equidistribution for the limits in Theorems \ref{theo:main_2A} and \ref{theo:mir}. More precisely, fix an ordered, oriented simple closed multi-curve $\vg := (\vg_1,\dots,\vg_k)$ on $S_g$ and $X \in \mathcal{T}_g$. For every $L > 0$ consider the counting measure on $\mathcal{PMF}_g \times \mathcal{MRG}(S_g \setminus \agamma;\Delta)$ given by
\[
\xi_{\vg,X}^L := \sum_{{\vec{\alpha}} \in \mathrm{Mod}_g \cdot {\vg}} \mathbbm{1}_{[0,L]}\left(\sqrt{\mathrm{Ext}}_X({\alpha})\right) \cdot \delta_{[\alpha]} \otimes \delta_{\Xi^1(X,{\vec{\alpha}})}.
\]
As always, this measure depends only on the underlying hyperbolic structure of $X \in \mathcal{T}_g$ and not on its marking. The question of equidistribution as $L \to \infty$ of these measures can be tackled by the same methods used in the proof of Theorem \ref{theo:main_2A} subject to some important modifications we now discuss.

The most important difference in the proof comes from the equidistribution result that one must use instead of Corollary \ref{cor:MV_horoball_equid}. Denote by $\mathcal{P}^1\mathcal{T}_g = \mathcal{T}_g \times \mathcal{PMF}_g$ the bundle projective singular measured foliations over Teichmüller space. This bundle carries a natural measure $\nu$ given by the following disintegration formula
\[
d\mathbf{n}(X,[\lambda]) = d\mu_{\mathrm{Thu}}^X([\lambda]) \thinspace d\mathbf{m}(X).
\]
The quotient $\mathcal{P}^1\mathcal{M}_g = \mathcal{P}^1\mathcal{T}_g/\mathrm{Mod}_g$ by the diagonal action of the mapping class group is the bundle of projective singular measured foliations over moduli space. As this action preserves the measure $\mathbf{n}$, we obtain a measure $\widehat{\mathbf{n}}$ on $\mathcal{P}^1\mathcal{M}_g$ satisfying the following disintegration formula:
\[
d\widehat{\mathbf{n}}(X,[\lambda]) = d\mu_{\mathrm{Thu}}^X([\lambda]) \thinspace d\widehat{\mathbf{m}}(X).
\]

To define the relevant horoballs we want to consider over $\mathcal{P}^1\mathcal{M}_g$ we proceed as follows. For any $L>0$ and any non-zero, continuous, compactly supported function
$h: \MRG(S_g \setminus \vg; \Delta) \to \RR$, define the $\Xi^1$-horoball measure on $\mathcal{P}^1\T_g$ by
\begin{equation*}
%\label{eq:Xihoro_meas}
d\mathbf{n}_{\agamma,h}^L(X,[\lambda]) := d \delta_{[\gamma]}([\lambda]) \thinspace d\bfm_{\vg,h}^L(X),
\end{equation*}
where $\delta_{[\gamma]}$ denotes the delta mass at $[\gamma] \in \mathcal{PMF}_g$. Directly from the definitions one can check that this measure is $\mathrm{Stab}_0(\vg)$-invariant. It follows that one can locally push this measure forward to $\mathcal{P}^1\mathcal{T}_g/\mathrm{Stab}_0(\vg)$ to get a measure $\smash{\widetilde{\mathbf{n}}_{\agamma,h}^L}$. Denote by $\smash{\widehat{\mathbf{n}}_{\agamma,h}^L}$ the measure on $\mathcal{P}^1\mathcal{M}_g$ obtained by pushing forward this measure. Notice that the total mass of this measure is exactly $m_{\vg,h}^L>0$.

The following result is the main equidistribution result needed to address the simultaneous equidistribution question alluded to above. 

\begin{proposition}
    \label{prop:XX}
    For any non-zero, continuous, compactly supported function
    $h: \MRG(S_g \setminus \vg; \Delta) \to \RR$, the following convergence holds with respect to the weak-$\star$ topology for measures on $\mathcal{P}^1\mathcal{M}_g$:
    \[
    \lim_{L \to \infty} \frac{\widehat{\mathbf{n}}_{\agamma,h}^L}{m_{\agamma,h}^L} = \frac{\widehat{\mathbf{n}}}{b_g} \thinspace.
    \]
\end{proposition}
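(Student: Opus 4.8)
The plan is to adapt the proof of Corollary~\ref{cor:MV_horoball_equid}, carrying the extra $\mathcal{PMF}_g$-coordinate through each step; the role played there by Theorem~\ref{thm:HMconst} is taken over by Proposition~\ref{prop:ABEMb}, which identifies the measure $\lambda^+(q)\,ds_X(q)$ on $S(X)$ with the pushforward $[\Im]_*\mu_{\mathrm{Thu}}^X$ of the coned-off Thurston measure. Fix a continuous, compactly supported $f\colon\mathcal{P}^1\mathcal{M}_g\to\mathbb{R}$ and let $\widetilde f$ be its lift to $\mathcal{P}^1\mathcal{T}_g/\Stab_0(\agamma)$. Since the fibers of $\mathcal{P}^1\mathcal{M}_g\to\mathcal{M}_g$ (copies of the compact space $\mathcal{PMF}_g$) and of $\mathcal{Q}^1\mathcal{M}_g\to\mathcal{M}_g$ are compact, the function $g\colon\mathcal{Q}^1\mathcal{M}_g\to\mathbb{R}$ obtained by evaluating $f$ at the image of the pair $\bigl(\pi(q),[\Im(q)]\bigr)$ in $\mathcal{P}^1\mathcal{M}_g$ is continuous and compactly supported.

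First I would unfold. Because $\mathbf{n}_{\agamma,h}^L=\delta_{[\gamma]}\otimes\bfm_{\agamma,h}^L$ and $[\gamma]$ is fixed by $\Stab_0(\agamma)$, integrating $f$ against $\widehat{\mathbf{n}}_{\agamma,h}^L$ reduces, after lifting to $\mathcal{T}_g/\Stab_0(\agamma)$, to integrating $X\mapsto\widetilde f(X,[\gamma])$ against $\widetilde{\bfm}_{\agamma,h}^L$. Running the Hubbard--Masur change of variables exactly as in Corollary~\ref{cor:MV_horoball_equid} (the measures $\widetilde{\bfm}_{\agamma,h}^L$ and the pushforward of $\widetilde{\nu}_{u,\agamma,h}^L$ under the forgetful map differ by the Radon--Nikodym factor $\lambda^+$), and using that $[\Im(q)]=[\gamma]$ for every $q$ in the unstable leaf $\Fol^u_{[\gamma]}$, one gets the identity
\[
\int_{\mathcal{P}^1\mathcal{M}_g}f\,d\widehat{\mathbf{n}}_{\agamma,h}^L=\int_{\mathcal{Q}^1\mathcal{M}_g}g(q)\,\lambda^+(q)\,d\widehat{\nu}_{u,\agamma,h}^L(q).
\]
As $g\cdot\lambda^+$ is continuous and compactly supported, Proposition~\ref{prop:unstable_horoball_equid} then gives
\[
\lim_{L\to\infty}\frac{1}{m_{\agamma,h}^L}\int_{\mathcal{P}^1\mathcal{M}_g}f\,d\widehat{\mathbf{n}}_{\agamma,h}^L=\frac{1}{b_g}\int_{\mathcal{Q}^1\mathcal{M}_g}g(q)\,\lambda^+(q)\,d\widehat{\nu}_{\mathrm{MV}}(q).
\]

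It remains to recognize the right-hand side as $\tfrac{1}{b_g}\int f\,d\widehat{\mathbf{n}}$. Disintegrating $\widehat{\nu}_{\mathrm{MV}}$ over $\mathcal{M}_g$ via the conditional spheres $s_X$ (equation~\eqref{eqn:MV_sphere_disint}) and then applying Proposition~\ref{prop:ABEMb}, the integral over each fiber $S(X)$ becomes
\[
\int_{S(X)}g(q)\,\lambda^+(q)\,ds_X(q)=\int_{S(X)}g(q)\,d\bigl([\Im]_*\mu_{\mathrm{Thu}}^X\bigr)(q)=\int_{\mathcal{PMF}_g}f(X,[\lambda])\,d\mu_{\mathrm{Thu}}^X([\lambda]),
\]
where the last step is the change of variables along the homeomorphism $[\Im]\colon S(X)\to\mathcal{PMF}_g$ together with $g(q)=f(X,[\Im(q)])$ on $S(X)$ (here $f(X,[\lambda])$ denotes the value of $f$ at the point of $\mathcal{P}^1\mathcal{M}_g$ determined by $X$ and $[\lambda]$). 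Integrating against $\widehat{\bfm}$ and using the disintegration $d\widehat{\mathbf{n}}(X,[\lambda])=d\mu_{\mathrm{Thu}}^X([\lambda])\,d\widehat{\bfm}(X)$ produces exactly $\tfrac{1}{b_g}\int_{\mathcal{P}^1\mathcal{M}_g}f\,d\widehat{\mathbf{n}}$, which completes the argument. As a consistency check, pushing this identity forward under the projection $\mathcal{P}^1\mathcal{M}_g\to\mathcal{M}_g$ recovers Corollary~\ref{cor:MV_horoball_equid}, since $\mu_{\mathrm{Thu}}^X$ has total mass $\Lambda(X)=\Lambda_g$ by Lemma~\ref{lem:HMfunc_Ext} and Theorem~\ref{thm:HMconst}.

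I expect the main difficulty to be organizational rather than conceptual: one must verify that the bundle $\mathcal{P}^1\mathcal{T}_g=\mathcal{T}_g\times\mathcal{PMF}_g$, its measure $\mathbf{n}$, and the horoball measures $\mathbf{n}_{\agamma,h}^L$ all pass correctly through the chain of local and global pushforwards, and that the unfolding identity above carries exactly the normalization constants appearing in Corollary~\ref{cor:MV_horoball_equid}. Beyond this bookkeeping there is no new analytic input: the argument transcribes the proof of Corollary~\ref{cor:MV_horoball_equid} with Proposition~\ref{prop:ABEMb} inserted in place of the final appeal to Theorem~\ref{thm:HMconst}.
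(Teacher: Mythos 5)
Your proposal matches the paper's proof step for step: unfold using the factorization $\mathbf{n}_{\agamma,h}^L = \delta_{[\gamma]}\otimes\bfm_{\agamma,h}^L$, run the Hubbard--Masur change of variables with the density $\lambda^+$, invoke Proposition~\ref{prop:unstable_horoball_equid}, then disintegrate $\widehat{\nu}_{\mathrm{MV}}$ via~\eqref{eqn:MV_sphere_disint} and Proposition~\ref{prop:ABEMb} to recognize the limit as $\widehat{\mathbf{n}}/b_g$. The only cosmetic difference is that you package the test function as $g(q) = f(\pi(q),[\Im(q)])$ and note its continuity and compact support explicitly, whereas the paper leaves this implicit; the substance is the same.
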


\begin{proof}
    Let $f \colon \mathcal{P}^1\mathcal{M}_g \to \mathbb{R}$ be a continuous, compactly supported function and set $\smash{\widetilde{f}} \colon \mathcal{P}^1\T_g / \Stab_0(\agamma) \to \mathbb{R}$ to be its pullback. 
As a direct consequence of the definitions and the Hubbard--Masur theorem, for every $L > 0$ we can rewrite
\begin{align*}
\int_{\mathcal{P}^1\M_g} f(X,[\lambda]) \, &d \widehat{\mathbf{n}}^L_{\vg, h}(X,[\lambda])\\
& = \int_{\mathcal{P}^1\T_g/\Stab_0(\vg)} \smash{\widetilde{f}}(X,[\lambda]) \, d \widetilde{\mathbf{n}}^L_{\agamma, h}(X,[\lambda])\\
& = \int_{\mathcal{T}_g/\Stab_0(\vg)} \smash{\widetilde{f}}(X,[\gamma]) \thinspace 
\lambda^+\left( \frac{\JS(X,\gamma)}{\sqrt{\Ext}_X(\gamma)} \right)
\, d (\pi_{[\gamma]})_* \widetilde{\nu}^L_{u,\agamma,h}(X) \\
& = \int_{\QoT/\Stab_0(\vg)} \smash{\widetilde{f}}(\pi(q),[\Im(q)]) \thinspace \lambda^+(q) \, d \widetilde{\nu}^L_{u,\agamma,h}(q).
\end{align*}
Pushing back down to moduli space and dividing by the total mass $m_{\agamma,h}^L$ we get
\begin{align*}
\frac{1}{m_{\agamma,h}^L} \int_{\mathcal{P}^1\M_g} f(X,[\lambda]) \, &d \widehat{\mathbf{n}}^L_{\vg, h}(X,[\lambda]) \\
& = \frac{1}{m_{\agamma,h}^L} \int_{\QoM} f(\pi(q),[\Im(q)]) \thinspace \lambda^+(q) \, d \widehat{\nu}^L_{u,\agamma,h}(q) \\
& \to \frac{1}{b_g} \int_{\QoM} f(\pi(q),[\Im(q)])  \thinspace \lambda^+(q) \, d \widehat{\nu}_{\mathrm{MV}}(q),
\end{align*}
where the convergence as $L \to \infty$ follows from Proposition \ref{prop:unstable_horoball_equid}.
Disintegrating the Masur-Veech measure fiberwise and using Proposition \ref{prop:ABEMb} we deduce
\begin{align*}
    \frac{1}{b_g} \int_{\QoM} f(\pi(q),&[\Im(q)])  \thinspace \lambda^+(q) \, d \widehat{\nu}_{\mathrm{MV}}(q) \\
    &= \frac{1}{b_g} \int_{\mathcal{M}_g} \int_{S(X)} f(X,[\Im(q)])  \thinspace \lambda^+(q) \thinspace ds_X(q) \, d \widehat{\mathbf{m}}(X)\\
    &= \frac{1}{b_g} \int_{\mathcal{P}^1\mathcal{M}_g} f(X,[\lambda]) \thinspace d \widehat{\mathbf{n}}(X,[\lambda]).
\end{align*}
Putting the identities above together finishes the proof.
\end{proof}

The following simultaneous equidistribution result can be proved by using similar arguments as in the proof of Theorem \ref{theo:main_2A} but working over the bundle $\mathcal{P}^1\mathcal{M}_g$ instead of $\mathcal{M}_g$ and using Proposition \ref{prop:XX} in place of Corollary \ref{cor:MV_horoball_equid}; compare to \cite[Proof of Theorem 3.5]{Ara20a}.

\begin{theorem}
\label{theo:main_3_new}
Let $\agamma:=(\agamma_1,\dots,\agamma_k)$ be an ordered, oriented simple closed multi-curve on $S_g$ and $X \in \mathcal{M}_g$ be a complex structure on $S_g$. Then, with respect to the weak-$\star$ topology for measures on $\mathcal{PMF}_g \times \mathcal{MRG}(S_g \setminus \agamma;\Delta)$,
\[
\lim_{L \to \infty} \frac{\xi_{\vg,X}^L}{s(X,\vg,L)} = \frac{\mu_\mathrm{Thu}^X}{\Lambda_g} \otimes \frac{\mathring{\eta}_{\mathrm{Kon}}^{\Delta}}{m_{\agamma}}.
\]
\end{theorem}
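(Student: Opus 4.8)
The plan is to run the proof of Theorem \ref{theo:main_2A} (equivalently, of Theorem \ref{theo:redA}) essentially line by line, working over the bundle $\mathcal{P}^1\mathcal{M}_g$ in place of $\mathcal{M}_g$ and invoking Proposition \ref{prop:XX} wherever that argument uses Corollary \ref{cor:MV_horoball_equid}; this mirrors \cite[Proof of Theorem 3.5]{Ara20a}. As in \S\ref{sec:avgunfold}, it suffices to prove the convergence after pairing with test functions, and since $\mathcal{PMF}_g$ is compact while any such function has compact support in the ribbon-graph factor, Stone--Weierstrass and linearity reduce the problem to product test functions $g \otimes f_0$, with $g\colon \mathcal{PMF}_g \to \RR_{\geq 0}$ continuous and $f_0 \colon \mathcal{MRG}(S_g \setminus \agamma; \Delta) \to \RR_{\geq 0}$ continuous and compactly supported. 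So the goal becomes to show that $\tfrac{1}{s(X,\agamma,L)}\sum_{\vec\alpha \in \Mod_g \cdot \agamma}\mathbbm{1}_{[0,L]}(\sqrt{\Ext}_X(\alpha))\, g([\alpha])\, f_0(\Xi^1(X,\vec\alpha))$ tends to $\tfrac{1}{\Lambda_g m_\agamma}\bigl(\int g \, d\mu_\mathrm{Thu}^X\bigr)\bigl(\int f_0\, d\mathring\eta_\mathrm{Kon}^\Delta\bigr)$.

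First I would observe that the estimates of \S\ref{sec:msl} and \S\ref{sec:weightsvaryA} carry over unchanged, since the projective class $[\alpha]$ is an invariant of $\alpha$ that does not move with the base surface: Proposition \ref{prop:key_estimate} still discards the negligible family of multi-curves whose Jenkins--Strebel differentials have short saddle connections, and Proposition \ref{prop:varA} still controls the indicator $\mathbbm{1}_{[0,L]}(\sqrt{\Ext}_{(\cdot)}(\alpha))$ and the ribbon graph $\Xi^1(\cdot,\vec\alpha)$ over a Teichm{\"u}ller ball. Consequently the averaging step of Proposition \ref{prop:compA} and Corollary \ref{cor:comp_2A} applies to the weighted counts above (in their Teichm{\"u}ller-space form), with the approximants $f_{0,\varepsilon}^{\min}, f_{0,\varepsilon}^{\max}$ now taken in the $\mathcal{MRG}$-factor only and the factor $g([\alpha])$ carried along untouched, up to the usual error $O_{X,f}(L^{6g-7} + \varepsilon L^{6g-6})$.

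The one substantive change is in the unfolding. I would use a bump function $\widetilde\beta_{X,\delta}$ supported in the Teichm{\"u}ller $\delta$-ball about the \emph{marked} surface $X$ (rather than a bump on $\mathcal{M}_g$); this is what allows the limit to retain its dependence on the marking of $X$ through $\mu_\mathrm{Thu}^X$. Keeping track of the $\Stab_0(\agamma)$- and $\Mod_g$-equivariance of $\JS$, $\Ext$, $\Xi^1$ and of the labelling of complementary subsurfaces, the averaged weighted count equals $\int_{\mathcal{P}^1\mathcal{T}_g} \widetilde\beta_{X,\delta}(Y)\, g([\lambda])\, d\mathbf{N}^{L'}(Y,[\lambda])$, where $L' = e^{\pm\delta}L$ and $\mathbf{N}^{L'}$ is the $\Mod_g$-invariant lift to $\mathcal{P}^1\mathcal{T}_g$ of the critical-JS-horoball $\widehat{\mathbf{n}}^{L'}_{\agamma, f_0}$ of \S\ref{sec:critical_equi}, with the $\mathcal{MRG}$-weight $f_0$ playing the role of $h$; although the horoball is supported over the single projective direction $[\gamma]$ before passing to $\mathcal{P}^1\mathcal{M}_g$, its image there (equivalently, $\mathbf{N}^{L'}$ itself) is spread over the dense orbit $\Mod_g \cdot [\gamma]$. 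Since $\widetilde\beta_{X,\delta} \otimes g$ is continuous and compactly supported on $\mathcal{P}^1\mathcal{T}_g$, Proposition \ref{prop:XX} — lifted from weak-$\star$ convergence on $\mathcal{P}^1\mathcal{M}_g$ to local vague convergence on $\mathcal{P}^1\mathcal{T}_g$ — gives that $\tfrac{1}{m^{L'}_{\agamma, f_0}}\int_{\mathcal{P}^1\mathcal{T}_g}\widetilde\beta_{X,\delta}(Y) g([\lambda])\, d\mathbf{N}^{L'}(Y,[\lambda])$ tends to $\tfrac{1}{b_g}\int_{\mathcal{T}_g}\widetilde\beta_{X,\delta}(Y)\bigl(\int_{\mathcal{PMF}_g} g\, d\mu_\mathrm{Thu}^Y\bigr) d\mathbf{m}(Y)$ via the disintegration $d\mathbf{n}(Y,[\lambda]) = d\mu_\mathrm{Thu}^Y([\lambda])\, d\mathbf{m}(Y)$ on $\mathcal{P}^1\mathcal{T}_g$; letting $\delta \to 0$ collapses this to $\tfrac{1}{b_g}\int g\, d\mu_\mathrm{Thu}^X$. (The normalization $\mu_\mathrm{Thu}^X(\mathcal{PMF}_g) = \Lambda_g$ from Lemma \ref{lem:HMfunc_Ext} and Theorem \ref{thm:HMconst} is only needed to recover the degenerate case $g \equiv 1$, i.e. Theorem \ref{theo:redA}.)

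Finally I would assemble the pieces. Combining the horoball-mass identity $m^{L'}_{\agamma,f_0} = (L')^{6g-6}\int f_0\, d\mathring\eta_\mathrm{Kon}^\Delta$ (Corollary \ref{cor:horomass}, still valid because the horoball weight is a function on $\mathcal{MRG}$), the averaging sandwich of the second paragraph, and Mirzakhani's asymptotics \eqref{eq:mirA} for $s(X,\agamma,L)$, one sandwiches the $\liminf$ and $\limsup$ of the normalized weighted count between $\tfrac{1}{\Lambda_g m_\agamma}(\int g\, d\mu_\mathrm{Thu}^X)(\int f_{0,\varepsilon}^{\min}\, d\mathring\eta_\mathrm{Kon}^\Delta)$ and $\tfrac{1}{\Lambda_g m_\agamma}(\int g\, d\mu_\mathrm{Thu}^X)(\int f_{0,\varepsilon}^{\max}\, d\mathring\eta_\mathrm{Kon}^\Delta)$, up to $O_{X,f}(\varepsilon)$ and harmless factors $e^{O(\delta)}$; sending $\varepsilon \to 0$ (hence $\delta \to 0$) with the monotone convergence theorem settles the product case, and summing over the Stone--Weierstrass approximants settles the general case. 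The main obstacle is precisely the equivariance-and-labelling bookkeeping in the unfolding identity: one must track the $\Stab_0(\agamma)$- and $\Mod_g$-actions, the weighting of projective classes by $[\Stab(\gamma) : \Stab_0(\agamma)]$, and the identifications of complementary-subsurface moduli spaces carefully enough to see that the decorated counting measure $\sum_{\vec\alpha}\delta_{[\alpha]}\otimes\delta_{\Xi^1(X,\vec\alpha)}$ genuinely assembles into an integral against the $\mathcal{P}^1$-horoball to which Proposition \ref{prop:XX} applies. Once that is in place, everything else is a transcription of \S\ref{sec:avgunfold}.
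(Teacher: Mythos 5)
Your proposal is correct and follows the route the paper itself indicates (and leaves unwritten): rerun the averaging-and-unfolding argument of \S\ref{sec:avgunfold} over the bundle $\mathcal{P}^1\mathcal{M}_g$ and invoke Proposition~\ref{prop:XX} where the previous argument used Corollary~\ref{cor:MV_horoball_equid}, following the template of \cite[Theorem~3.5]{Ara20a}. You correctly identify the one genuinely new issue: because the weight $g([\alpha])$ makes the decorated counting function $\Mod_g$\emph{-equivariant} rather than invariant, the bump-and-unfold step must take place on $\mathcal{P}^1\mathcal{T}_g$ (equivalently, via a $\Mod_g$-averaged test function on $\mathcal{P}^1\mathcal{M}_g$) rather than on $\mathcal{M}_g$, and the $\delta \to 0$ step then produces $\mu_{\mathrm{Thu}}^X$ from the disintegration $d\mathbf{n} = d\mu_{\mathrm{Thu}}^Y\, d\mathbf{m}$ by continuity of $Y \mapsto \mu_{\mathrm{Thu}}^Y$. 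The preliminary Stone--Weierstrass reduction to product test functions $g \otimes f_0$ is an additional organizational move not present in the paper's own proof of Theorem~\ref{theo:redA}; it is legitimate here because $\mathcal{PMF}_g$ is compact and the normalized counting measures are probability measures (so tightness is automatic), but one could also handle general test functions directly as in \cite{Ara20a}. One typographical slip: the horoball appearing after unfolding should carry the approximant $f_{0,\varepsilon}^{\max}$ (resp.\ $f_{0,\varepsilon}^{\min}$) as its $\mathcal{MRG}$-weight, not $f_0$ itself, though it is clear from the preceding paragraph that this is what you intend.
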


As a consequence, we see that even when prescribing how a set of curves coarsely wraps around $S_g$ (for example, by fixing a maximal train track chart for $\MF_g$), the critical graphs defined by those curves remain uniformly distributed.

\bibliographystyle{amsalpha}

\bibliography{bibliography}
\end{document}